\newtheorem{theorem}{Theorem}[section]
\newtheorem{lemma}[theorem]{Lemma}
\newtheorem{corollary}[theorem]{Corollary}
\newtheorem{proposition}[theorem]{Proposition}
\theoremstyle{definition}
\newtheorem{definition}[theorem]{Definition}
\newtheorem{notation}[theorem]{Notation}
\newtheorem{setup}[theorem]{Setup}
\newtheorem{remark}[theorem]{Remark}
\newtheorem{example}[theorem]{Example}
\newtheorem{construction}{Construction}
\newcolumntype{L}{>{$}l<{$}} 
\newcolumntype{C}{>{$}c<{$}} 
\title[Chow quotients of $\C^*$-actions on convex varieties]{Chow quotients of $\C^*$-actions on convex varieties}
\author[Occhetta]{Gianluca Occhetta}
\address{Dipartimento di Matematica, Universit\`a degli Studi di Trento, via
Sommarive 14 I-38123 Povo di Trento (TN), Italy}
\email{gianluca.occhetta@unitn.it, eduardo.solaconde@unitn.it}
\author[Sol\'a Conde]{Luis E. Sol\'a Conde}
\subjclass[2010]{Primary 14L30; Secondary 14E30, 14L24, 14M17, 14J45}
\thanks{Author partially supported by INdAM--GNSAGA. The authors would like to thank Jarek Wi\'sniewski for inspiring discussions about Chow quotients.}
\newcommand\ignore[1]{}
\DeclareMathOperator{\Spec}{Spec}
\DeclareMathOperator{\HH}{H}
\def\sing{\operatorname{sing}}
\newcommand\PP{{\mathbb{P}}}
\newcommand\FF{{\mathbb{F}}}
\def\C{{\mathbb C}}
\def\G{{\mathbb G}}
\def\P{{\mathbb P}}
\def\Q{{\mathbb Q}}
\def\Z{{\mathbb Z}}
\def\cA{{\mathcal A}}
\def\cD{{\mathcal D}}
\def\cH{{\mathcal{H}}}
\def\cI{{\mathcal I}}
\def\cL{{\mathcal L}}
\def\cM{{\mathcal M}}
\def\cN{{\mathcal{N}}}
\def\cO{{\mathcal{O}}}
\def\cU{{\mathcal U}}
\def\cY{{\mathcal Y}}
\def\Q{{\mathbb{Q}}}
\def\G{{\mathbb{G}}}
\def\cExt{{\mathcal E}xt}
\def\operatorname#1{\mathop{\rm #1}\nolimits}
\def\DA{{\rm A}}
\def\DB{{\rm B}}
\def\DC{{\rm C}}
\def\DD{{\rm D}}
\def\Proj{\operatorname{Proj}}
\def\Chow{\operatorname{Chow}}
\def\Ext{\operatorname{Ext}}
\def\Exc{\operatorname{Exc}}
\def\Hilb{\operatorname{Hilb}}
\def\Hom{\operatorname{Hom}}
\def\Pic{\operatorname{Pic}}
\def\Hom{\operatorname{Hom}}
\def\Spec{\operatorname{Spec}}
\def\codim{\operatorname{codim}}
\def\rk{\operatorname{rk}}
\def\NE{{\operatorname{NE}}}
\def\Nef{{\operatorname{Nef}}}
\def\Nu{{\operatorname{N_1}}}
\def\NU{{\operatorname{N^1}}}
\def\PGL{\operatorname{PGL}}
\def\prul{s}
\def\prur{d}
\def\GX{\mathcal{G}\!X\hspace{-0.5pt}}
\def\CX{\mathcal{C}\!X\hspace{-0.5pt}}
\newcommand{\pb}{\ar@{}[dr]|{\text{\pigpenfont J}}}
\def\ol{\overline}
\newcommand{\shse}[3]{0 ~\ra ~#1~ \lra ~#2~ \lra ~#3~ \ra~ 0}
\newcommand{\xleftrightarrow}[2][]{\ext@arrow 3359\leftrightarrowfill@{#1}{#2}}
\newcommand{\xdasharrow}[2][->]{
\tikz[baseline=-\the\dimexpr\fontdimen22\textfont2\relax]{
\node[anchor=south,font=\scriptsize, inner ysep=1.5pt,outer xsep=2.2pt](x){#2};
\draw[shorten <=3.4pt,shorten >=3.4pt,dashed,#1](x.south west)--(x.south east);
}}
\newcommand\m{{\mathfrak m}}
\newcommand\ra{{\ \rightarrow\ }}
\newcommand\lra{\longrightarrow}
\def\Mo{\operatorname{\hspace{0cm}M}}
\begin{document}
\begin{abstract}
In this paper we study the Chow quotient $\CX$ of a convex variety $X$ of Picard number one by the action of a one dimensional torus with no nontrivial finite isotropy. Examples of these actions can be found in the rational homogeneous framework. We prove that the subvariety of $\CX$ parametrizing reducible torus-invariant cycles is a simple normal crossing divisor, we compute the Nef and Mori cones of $\CX$, and its anticanonical divisor.
\end{abstract}

\maketitle

\tableofcontents


\section{Introduction}\label{sec:intro}

Starting with Mumford's celebrated Geometric Invariant Theory in the 1960's (cf. \cite{MFK}), quotients have become one essential technique in modern Algebraic Geometry. They allow to construct new examples of algebraic varieties, in particular moduli spaces and parameter spaces of different types.

In Mumford's theory quotients are constructed upon  polarized varieties by means of stability conditions. While variations of these stability conditions can be interpreted in birational geometric terms, giving rise to the concept of Mori Dream Space (see \cite{Thaddeus1996,HuKeel}), GIT also motivated some geometers to define intrinsic concepts of quotient by the action of an algebraic group.

One of these is the concept of Chow quotient, introduced by  
Kapranov (cf. \cite{Kap}). In a nutshell, the Chow quotient of a projective variety $X$ by the action of an algebraic group $H$  is the closure of the subvariety of the Chow scheme $\Chow(X)$ parametrizing the closure of the general orbit in $X$.

Remarkably, the case in which $X=G/P$ is a complex projective rational homogeneous variety and $H$ is a torus leads to a number of interesting Chow quotients. A noteworthy example of this kind is the Grothendieck--Knudsen moduli space $\ol{M}_{0,n}$ of stable punctured curves of genus zero: Kapranov showed in 1993 (see \cite[Chapter~4]{Kap}) that it is the Chow quotient of the Grassmannian of lines in $\P^{n-1}$ by the action of a maximal torus in $\PGL(n)$. 

Furthermore, as Thaddeus noted in 2001, (\cite{Thaddeus}) some specially interesting varieties can be obtained as Chow quotients of  rational homogeneous spaces by the actions of $1$-dimensional complex tori. More precisely, the Chow quotients of:
\begin{itemize}[leftmargin=25pt]
\item Grassmannians of $(n-1)$-dimensional linear subspaces of $\P^{n+m-1}$ ($n \le m$),
\item Lagrangian Grassmannians of $(n-1)$-dimensional isotropic subspaces in $\P^{2n-1}$ (with respect to a fixed nondegenerate skew-symmetric form),
\item Orthogonal Grassmannians of $(n-1)$-dimensional linear subspaces in a quadric of dimension $2n-2$, 
\end{itemize}
by some particular $\C^*$-actions are isomorphic, respectively, to the moduli spaces:
\begin{itemize}[leftmargin=25pt]
\item   of complete collineations from a vector space of dimension $m$ to a vector space of dimension $n$, 
\item of complete quadrics of dimension $n-1$,  
\item  of complete skew-forms on a vector space of dimension $n$. 
\end{itemize}

These three moduli spaces have been extensively studied in the literature (see, for instance, \cite{Lak82,Lak87,Lak88, Thaddeus, Vai82,Vai84}), due to their importance in the framework of enumerative geometry. In particular, some of their birational geometric properties (such as their nef, movable and effective cones) have been studied in \cite{Mas1,Mas2}. Many of these results are based on classical constructions of these varieties, by means of sequences of blowups of a projective space. 
We believe that their description as Chow quotients may help understanding the geometry of these varieties in a unified broader setting, and that some of their geometric properties will be shared by other Chow quotients of rational homogeneous varieties.

The setting in which our arguments work is the class of convex varieties, which are smooth projective varieties whose tangent bundle is nef on rational curves. Note that this class contains the class of Fano manifolds whose tangent bundle is nef, and that it has been conjectured by Campana and Peternell that rational homogeneous varieties are the only Fano manifolds with this property (cf. \cite{CP,MOSWW}); to our best knowledge no examples of non homogeneous uniruled convex varieties are known.

We assume that the Picard number of the variety $X$ whose quotient we consider is one, and that the action of $\C^*$ is {\em equalized}; this means that proper isotropy subgroups of the action are trivial (see Section \ref{sssec:equal}). These assumptions are satisfied by the actions leading to complete collineations, quadrics, and skew-forms, but also by many other examples of torus actions on rational homogeneous varieties, whose Chow quotients are, to our best knowledge, still unknown.

We will use some background results, that we have presented in \cite{WORS6,OS2}, and that can be applied to a variety $X$ endowed with a $\C^*$-action as above. In particular, we will use that, in our setting, the Chow quotient $\CX$ is smooth, and that the critical values of the action --with respect to the ample generator $L$ of the Picard group of  the variety-- are of the form $0,d,2d,\dots,rd$ for some positive integers $d,r$; the number $r$ is called the {\em criticality} of the action.
Furthermore, there exists  exactly one fixed point component $Y_j$ of $L$-weight $jd$, for every $j$ (see Section \ref{ssec:polar} and Proposition \ref{prop:onecomp}).

A finer study of deformations of $\C^*$-invariant rational cycles allows us to describe the Chow quotient $\CX$ of $X$ as the union of the set of irreducible deformations of the general $1$-dimensional $\C^*$-invariant cycle and a divisor (called {\em boundary} of $\CX$) with simple normal crossings (see Theorem \ref{thm:smoothborder} for the precise statement). 

\begin{theorem}\label{thm:main1}
Let $X$ be a convex variety of Picard number one, endowed with an equalized $\C^*$-action of criticality $r\geq 2$. Then the Chow quotient $\CX$ of $X$ by the action of $\C^*$ is smooth, and the subscheme  of  
$\CX$ parametrizing reducible elements is a divisor with simple normal crossings and $r-1$ irreducible components.
\end{theorem}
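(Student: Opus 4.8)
The plan is to separate the combinatorial bookkeeping from the local study of $\CX$. As $\CX$ is, by definition, the closure in $\Chow(X)$ of the irreducible locus of closures of general orbits, it is irreducible, and it is smooth by the recalled background results. A connectedness and degree count shows that every point of $\CX$ parametrizes a connected, reduced, $\C^*$-invariant $1$-cycle which is a chain $\gamma_1+\dots+\gamma_k$ of invariant rational curves, with $\gamma_i$ joining $Y_{a_{i-1}}$ to $Y_{a_i}$ and meeting $\gamma_{i+1}$ at a point $p_i\in Y_{a_i}$, where $0=a_0<a_1<\dots<a_k=r$; I call $\{a_1,\dots,a_{k-1}\}\subseteq\{1,\dots,r-1\}$ its \emph{break set}. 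For $S\subseteq\{1,\dots,r-1\}$ let $\CX_S\subseteq\CX$ be the set of cycles whose break set contains $S$; since in a flat degeneration a link may only break further, never heal, $\CX_S$ is closed, and the reducible locus is $B=\bigcup_{j=1}^{r-1}\CX_{\{j\}}$. The theorem then reduces to: (a) $\CX_S$ is smooth of pure codimension $|S|$ in $\CX$; and (b) $\CX_{\{j\}}$ is irreducible. Indeed, (a) with $|S|=1$ together with (b) makes each $\CX_{\{j\}}$ a smooth irreducible divisor; these $r-1$ divisors are pairwise distinct and none contains another, so they are exactly the irreducible components of $B$; and as $\bigcap_{j\in S}\CX_{\{j\}}=\CX_S$, statement (a) says they meet transversally along every multiple intersection — precisely the simple normal crossing property, the reduced structure being forced by the local equations found in the next step.

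The core is (a), which is local and which I would extract from a normal form for $\CX$ near an arbitrary point $[\Gamma]$, with $\Gamma=\gamma_1+\dots+\gamma_k$ of break set $S$, $|S|=k-1$. Since the action is equalized, each link $\gamma_i\cong\P^1$ is smooth, and the two branches $\gamma_i,\gamma_{i+1}$ at $p_i$ are distinct orbit closures (one flowing into $p_i$, the other out of it); so the node $p_i$ carries a one-dimensional smoothing parameter $z_i$, and $z_1,\dots,z_{k-1}$ together with the deformations of $\Gamma$ preserving its combinatorial type span the tangent space of $\Chow(X)$, hence of $\CX$, at $[\Gamma]$. Convexity now enters decisively: $T_X$ is nef on every $\gamma_i\cong\P^1$, hence has vanishing $H^1$ there, and propagating this through the normalization sequence of the nodal curve $\Gamma$ yields the vanishing of $H^1$ of the relevant normal sheaves on $\Gamma$. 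Consequently all obstructions vanish, $\CX$ is smooth at $[\Gamma]$, and the $z_i$ extend to an \'etale-local coordinate system in which $\CX_{S'}=\{z_i=0:\,a_i\in S'\}$ for every $S'\subseteq S$; this is (a).

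For (b) — and for an inductive reading of the previous step — I would split a cycle at a break level. A point of the open stratum $\CX_{\{j\}}^{\circ}$ (break set exactly $\{j\}$) is a pair $\gamma^-+\gamma^+$ with $\gamma^-$ an irreducible invariant curve from $Y_0$ to $Y_j$ and $\gamma^+$ one from $Y_j$ to $Y_r$, glued along their common point of $Y_j$. The family of such $\gamma^-$ is the open part of a Chow quotient of the same kind as $\CX$ but of criticality $j$ — attached to the Bia\l ynicki--Birula cell of $Y_j$ through $Y_0$ — which, by the argument above with $j$ in place of $r$, is smooth and irreducible and carries a morphism to $Y_j$ recording the endpoint; likewise for $\gamma^+$ with $r-j$. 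Hence $\CX_{\{j\}}^{\circ}$ is a fiber product over $Y_j$ of two irreducible varieties along morphisms one of which has irreducible fibers, so it, and its closure $\CX_{\{j\}}$, is irreducible; the Bia\l ynicki--Birula identity $\dim\overline{X^-(Y_j)}+\dim\overline{X^+(Y_j)}=\dim X+\dim Y_j$ independently re-derives $\codim\CX_{\{j\}}=1$. Combined with the smoothness of $\CX$, statements (a) and (b) give Theorem~\ref{thm:main1}.

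The genuine difficulty is the normal form used in the second step: that the $k-1$ node-smoothing parameters of a $k$-link chain really are independent local coordinates on $\CX$ at $[\Gamma]$, with no hidden obstruction or identification, even at the most degenerate cycles ($k=r$) — equivalently, that near such a point $\CX$ is, up to \'etale base change, the product of a fiber product of smaller Chow quotients with an affine space of node-smoothings. Establishing this rests on a careful, convexity-driven comparison between the deformation theory of a reducible invariant cycle in $\Chow(X)$ and those of its links and of their gluing points in the fixed components; it is the content of the finer Theorem~\ref{thm:smoothborder}, from which Theorem~\ref{thm:main1} follows as above.
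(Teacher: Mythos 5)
Your proposal is correct and follows essentially the same route as the paper: smoothness of $\CX$ via convexity and the deformation theory of nodal invariant cycles in $\Hilb(X)$, the node-smoothing parameters realized as independent local coordinates on $\CX$ (this is exactly the surjectivity of the restricted Kodaira--Spencer map, Proposition~\ref{prop:kodspen}, where convexity gives the $\HH^1$-vanishing and the equalized action makes the smoothing directions weight-zero), and irreducibility of each boundary divisor from its description as an open subset of a fiber product over $Y_j$ (the paper's $\P(N^-_{Y_j})\times_{Y_j}\P(N^+_{Y_j})$). The one step you leave sketched --- that the smoothing parameters really are unobstructed coordinates on $\CX$ itself and not just on $\Hilb(X)$ --- is precisely the content of Proposition~\ref{prop:kodspen} and Theorem~\ref{thm:smoothborder}, which you correctly identify as the crux.
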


In a nutshell, the convexity assumption allows us to show that every deformation of a cycle parametrized by an element of $\CX$ can be realized as a $\C^*$-invariant deformation of the cycle in $X$ (see Section \ref{ssec:boundary}). 

On the other hand, we know that, in our setting, the Chow quotient $\CX$ is the normalization of the inverse limit of the GIT quotients of $X$. In particular, we get  information about some contractions of $\CX$; we show that we can use this contractions to describe completely the Mori cone of $\CX$. More precisely,  denoting by $\GX_{j,j}$, $j=0,\dots,r$ the semigeometric quotients of $X$, and  by $\cL_{j,j}$ the pullback to $\CX$ of an ample line bundle on $\GX_{j,j}$,  we may prove the following (see Theorem \ref{thm:nefcone} for the precise statement):

\begin{theorem}\label{thm:main2}
Let $X$ be a convex variety of Picard number one, endowed with an equalized $\C^*$-action of criticality $r\geq 2$. Then the nef cone of $\CX$ is generated by the classes $[\cL_{i,i}]$, $i=0,\dots,r$. 
\end{theorem}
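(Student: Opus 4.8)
The inclusion $\sum_{i=0}^{r}\RR_{\ge 0}[\cL_{i,i}]\subseteq\Nef(\CX)$ is immediate: denoting by $\phi_i\colon\CX\to\GX_{i,i}$ the natural morphisms, each $\cL_{i,i}$ is by definition the pullback $\phi_i^{*}A_i$ of an ample class and is therefore nef. The content of the statement is the reverse inclusion, and the plan is to obtain it by dualizing. Writing $\mathcal N:=\sum_{i=0}^{r}\RR_{\ge 0}[\cL_{i,i}]$, which is a closed convex cone, the inclusion $\Nef(\CX)\subseteq\mathcal N$ is equivalent to $\mathcal N^{\vee}\subseteq\cNE{\CX}$; and since $\cNE{\CX}\subseteq\mathcal N^{\vee}$ holds trivially, the theorem amounts to the equality $\cNE{\CX}=\mathcal N^{\vee}$, that is, to a complete description of the Mori cone. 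Now $\mathcal N^{\vee}$ is the polyhedral cone cut out in $N_1(\CX)_{\RR}$ by the half-spaces $\{z\cdot\cL_{i,i}\ge 0\}$, so each of its extremal rays is spanned by a $1$-cycle annihilated by all but (at most) two of the classes $\cL_{i,i}$, equivalently contracted by all but (at most) two of the morphisms $\phi_i$. The task therefore reduces to representing each such ray by an honest curve on $\CX$.

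These curves will be produced and organized by an induction on the criticality $r$, drawing on two sources. The first is the variation-of-GIT description recalled in the introduction (see \cite{WORS6,OS2}): $\CX$ is the normalization of the inverse limit of the chain $\GX_{0,0}\leftarrow\GX_{0,1}\to\GX_{1,1}\leftarrow\dots\to\GX_{r,r}$, all the chamber quotients $\GX_{j,j+1}$ are birational to $\CX$, and for $0<j<r$ the two contractions $\GX_{j-1,j}\to\GX_{j,j}\leftarrow\GX_{j,j+1}$ realize the wall-crossing at the critical value $jd$; the loci they contract pull back, through Theorem \ref{thm:main1}, to families of curves supported on the boundary divisor(s) meeting the exceptional locus of $\phi_j$. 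The second source is the boundary itself: one checks---and this is where the convexity of $X$ enters, exactly as in Theorem \ref{thm:main1}---that each irreducible boundary component $D_j$ is isomorphic to a product of two Chow quotients of convex varieties of Picard number one of criticality strictly smaller than $r$ (a ``left'' factor assembled from $Y_0,\dots,Y_j$ and a ``right'' factor assembled from $Y_j,\dots,Y_r$), and that the restriction of each $\phi_i$ to $D_j$ factors through one of the two projections. By induction the Mori cones of both factors are generated by curves of the required kind; pushing these forward to $\CX$, together with the curves contracted by the extreme morphisms $\phi_0$ and $\phi_r$, yields a generating set of curves for $\mathcal N^{\vee}$.

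What then remains---and this is the crux at which the induction must be closed---is to check that the curves so produced really do generate $\cNE{\CX}$, equivalently that $\Nef(\CX)$ contains no extremal ray outside $\mathcal N$. The two ingredients are: (i) the structure of $\Pic(\CX)$ (of rank $r$) together with the relations among the $r+1$ classes $[\cL_{i,i}]$---the second-difference identities $[\cL_{j-1,j-1}]-2[\cL_{j,j}]+[\cL_{j+1,j+1}]\equiv -E_j$, with each $E_j$ an effective combination of boundary divisors, which express the strict concavity of the Duistermaat--Heckman variation $t\mapsto[\cL_t]$ at every wall and are the geometric reason for the conclusion; and (ii) the intersection numbers $[\cL_{i,i}]\cdot\Gamma$ for the curves $\Gamma$ of the previous paragraph, most of which vanish because of the contraction properties recorded there. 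Feeding these into the induction, the base case $r=2$ is an explicit computation---one boundary divisor, $\CX$ a one-step modification over the chamber quotients---while the inductive step restricts a given nef class of $\CX$ to each $D_j$, controls it there by the already-known nef cones of the two lower-criticality factors, and glues this boundary data into a global statement by means of the simple normal crossing structure of the boundary furnished by Theorem \ref{thm:main1}. (The rational polyhedrality of the cones in play, needed to legitimize the duality arguments, follows from the presentation of $\CX$ via GIT quotients of the Picard-rank-one---hence Mori dream---variety $X$ together with the smoothness of $\CX$, or alternatively once $-K_{\CX}$ is shown to be big elsewhere in the paper.) I expect the bookkeeping in step (ii), ruling out spurious extremal rays of $\cNE{\CX}$, to be the main obstacle.
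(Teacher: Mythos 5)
Your reduction of the theorem to a description of the Mori cone is the right first move, but what follows is a strategy with the decisive steps left open, and the inductive scaffolding you propose is both unproven and harder than what is needed. Two concrete problems. First, the induction never gets off the ground as stated: a boundary divisor $E_j$ generically parametrizes cycles $C_-\cup C_+$ meeting at a common point of $Y_j$, so it is (an open subset of) a \emph{fiber product} over $Y_j$ of spaces of invariant chains, not a product of two Chow quotients; and the natural candidates for the two factors are Chow quotients of the prunings $X_{0,j}$, $X_{j,r}$, which are birational modifications of $X$ and need not be convex of Picard number one, so the inductive hypothesis does not apply to them. You yourself flag the closing of the induction (``ruling out spurious extremal rays'') as an unresolved obstacle, and indeed nothing in the proposal rules them out.

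Second, you miss the structural fact that collapses the whole problem: $\rho(\CX)=|I|$ where $I=\{i\mid \dim Y_i>0\}$ (so the rank is $r-1$, $r$ or $r+1$, not always $r$ as you write), and the classes $[\cL_{i,i}]$, $i\in I$, already form a \emph{basis} of $\NU(\CX)$. Granting this, both cones are simplicial and no induction or boundary restriction is needed: it suffices to exhibit, for each $i\in I$, one curve $\Gamma_i$ with $\cL_{j,j}\cdot\Gamma_i=0$ for $j\neq i$ and $\cL_{i,i}\cdot\Gamma_i>0$; then any class in $\cNE{\CX}$ written as $\sum a_i\Gamma_i$ has $a_i\ge 0$ by pairing with $\cL_{i,i}$, and duality finishes the proof. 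The paper produces these ``primal curves'' explicitly: convexity gives a beautiful family $\cM$ of minimal rational curves on $X$, the preimage in the universal family of a suitable invariant orbit closure in $\cM$ is a ruled surface $\FF_2$, and its Chow quotient maps to a rational curve $\Gamma_i\subset\CX$ whose cycles all share their fixed points of weight $\ne i$; hence $\Gamma_i$ is contracted by $\CX\to\prod_{j\ne i}\GX_{j,j}$, which gives the required vanishings, while ampleness of $\bigotimes_k\cL_{k,k}$ gives $\cL_{i,i}\cdot\Gamma_i>0$. This explicit construction, which is where convexity actually enters, is the missing content of your proposal; the Duistermaat--Heckman second-difference relations you invoke are not needed.
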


We note here that $\cL_{0,0}$, $\cL_{r,r}$ may be trivial (precisely in the cases in which $\GX_{0,0}\simeq Y_0, \GX_{r,r}\simeq Y_r$ are isolated points), so the Picard number of $\CX$ is either $r-1$, $r$, or $r+1$. 

Our methods allow us to describe the Mori cone of $\CX$, as well. Its extremal rays are generated by the classes of some curves in $\CX$ that we call {\em primal}; these curves admit a very natural description as families of $\C^*$-invariant cycles (see Section \ref{ssec:primal}). 

Finally, our results allow us to compute the anticanonical divisor of $\CX$ in terms of some invariants of the $\C^*$-action. We then present  examples of actions in which $-K_{\CX}$ is ample (i.e., $\CX$ is a Fano manifold), nef but not ample, and not nef. 

A key point in our arguments is the following: the maps $\CX\to \GX_{i,i}$ factor through the Chow quotients $\CX_{i,j}$ of certain birational modifications $X_{i,j}$ of $X$ (for $i <j\in\{0,\dots, r\}$), called {\it prunings} (see Section \ref{sssec:quotients}). The varieties $\CX_{i,j}$, called partial Chow quotients of $X$, fit in a diagram of normalized Cartesian squares:
\[
\begin{tikzcd}[
  column sep={2.7em,between origins},
  row sep={2.7em,between origins}]
&&&&&&\CX\arrow[rd] \arrow[dl] &&&&&&\\
&&&&&\CX_{0,r-1}\arrow[rd] \arrow[dl]&&\CX_{1,r}\arrow[rd] \arrow[dl]&&&&&\\
&&&&\CX_{0,r-2} \arrow[dl]\arrow[rd]&&\CX_{1,r-1}\arrow[rd] \arrow[dl]&&\CX_{2,r}\arrow[dl]\arrow[rd]&&&&\\
&&& \arrow[ld,dotted,no head]&& \arrow[ld,dotted,no head]&&\arrow[rd,dotted,no head]&&\arrow[rd,dotted,no head]&&&\\
&&\CX_{0,2}\arrow[rd] \arrow[dl]&&\CX_{1,3} \arrow[dl]&&\dots&&\CX_{r-3,r-1}\arrow[rd] &&\CX_{r-2,r}\arrow[rd] \arrow[dl]&&\\
&\CX_{0,1}
&&\CX_{1,2}
&&\dots&&\dots&&\CX_{r-2,r-1}
&&\CX_{r-1,r}
&
\end{tikzcd}
\]
This idea has already allowed us to prove that, in the assumptions of Theorem \ref{thm:main1} $\CX_{i,j}$ is smooth if $i=0$ or  $j=r$, and that the maps 
\[\xymatrix@C=17pt{\GX_{0,1}&\CX_{0,2}\ar[l]&\ \dots\ \ar[l]&\CX\ar[l]\ar[r]&\ \dots\ \ar[r]&\CX_{r-2,r}
\ar[r]&\GX_{r-1,r}}\]
are blowups with smooth centers (see \cite[Propositions~6.4, 6.5]{WORS6}). We extend here these result, and show that, in the hypotheses of Theorem \ref{thm:main1}, all the maps between the partial Chow quotients are blowups with smooth centers, and that all the partial Chow quotients $\CX_{i,j}$ are smooth (Theorem \ref{thm:smoothpart}).


\section{Background material}

Throughout the paper we will work over the field of complex numbers. The term {\em rational homogeneous variety} will always refers to a projective variety that supports a transitive algebraic action of a semisimple group. When working with these varieties we will use the notation of marked Dynkin diagrams, as described, for instance, in \cite[Section~3.1]{WORS5}.

\subsection{$\C^*$-actions}\label{sec:c*}

In this section we will present some background material on $\C^*$-actions. We refer to \cite{BB,CARRELL,WORS1} for further details. In what follows $X$ will be a smooth projective variety, endowed with a nontrivial $\C^*$-action.

 We denote by $X^{\C^*}$  the fixed locus of the action, and by $\cY$ the set of irreducible fixed point components: 
$$X^{\C^*}=\bigsqcup_{Y\in \cY}Y.$$ 
 The {\em sink} and the {\em source} of the action, often called the {\em extremal fixed point components}, are the fixed point components containing, respectively, the limiting points 
$$
x_-:=\lim_{t\to 0}t^{-1}x, \qquad x_+:=\lim_{t\to 0}tx
$$
of  a general $x \in X$.
The fixed point components different from the sink and the source are called {\em inner}.
Every $Y\in\cY$ is smooth (cf. \cite{IVERSEN}),
and the normal bundle of $Y$ in $X$ splits into two subbundles, on which $\C^*$ acts with positive and negative weights, respectively:
\begin{equation}
N_{Y|X}\simeq N^+(Y)\oplus N^-(Y).
\label{eq:normal+-}
\end{equation}
We set $\nu^{\pm}(Y):=\rk (N^{\pm}(Y))$.

\subsubsection{Bia{\l}ynicki-Birula decomposition}
Given any set $S \subset X$, we define:
\begin{equation}\label{eq:BBcells}
X^\pm(S):=\{x\in X|\,\, x_\pm\in S\}, \quad
B^\pm(S):=\overline{X^\pm(S)}.
\end{equation}
When $S=Y$ with $Y\in \cY$ we call $X^\pm(Y)$ the positive and the negative {\it Bia{\l}ynicki-Birula cells} of the action at $Y$  (BB-cells, for short);  we have decompositions: 
$$X=\bigsqcup_{Y\in\cY}X^+(Y)=\bigsqcup_{Y\in\cY}X^-(Y).$$  
We will denote by $B^\pm(Y)$ the closures of the BB-cells $X^\pm(Y)$. Every cell $X^\pm(Y)$ has a natural morphism onto $Y$, sending every $x\in X^\pm(Y)$ to its limiting point $x_\pm$. It was shown by Bia{\l}ynicki-Birula that $X^\pm(Y)$ is an affine bundle, locally isomorphic to $N^\pm(Y)$.

\subsubsection{Actions on polarized pairs}\label{ssec:polar} Given  a line bundle $L\in \Pic(X)$, there exists a linearization of the $\C^*$-action on it, so that for every $Y\in \cY$, $\C^*$ acts on $L_{|Y}$ by multiplication with a character $m\in \Mo(\C^*)=\Hom(\C^*,\C^*)$, called {\em weight of the linearization on $Y$}. Two linearizations differ by a character of $\C^*$; in particular, for any  $L \in \Pic(X)$ there exists a unique linearization (called {\em normalized linearization})  whose weight at the sink is equal to zero. 
Fixing an isomorphism $\Mo(\C^*)\simeq \Z$, this linearization defines a map, called {\em weight map}, $\mu_L:\cY\to\Z$.

A {\em $\C^*$-action on the polarized pair $(X,L)$}
is a $\C^*$-action on $X$, together with the weight map $\mu_L$ determined by an ample line bundle $L$.  
In this case we denote by 
$$
0=a_0<\dots<a_r,$$ 
the weights $\mu_L(Y)$, $Y\in\cY$,  
and set: 
$$
Y_i:=\bigcup_{\mu_L(Y)=a_i}Y.
$$ 
The values $a_i$ will be called the {\em critical values} and the number $r$ will be called the {\em criticality} of the action.
The minimum and maximum of the critical values are achieved only at the  sink and the  source, 
which are then equal to $Y_0$ and $Y_r$, respectively. The value $\delta=a_r$ is called the {\em bandwidth} of the $\C^*$-action on $(X,L)$. 

\subsubsection{Equalized actions}\label{sssec:equal}

A $\C^*$-action  is {\em  equalized} at $Y\in\cY$ if for every $x\in (X^{-}(Y)\cup X^{+}(Y))\setminus Y$ the isotropy group of the action at $x$ is trivial. Equivalently (see \cite[Lemma 2.1]{WORS3}), $\C^*$ acts on $N^+(Y)$ with all the weights equal to $+1$ and on $N^-(Y)$ with all the weights equal to $-1$. The action is called {\em equalized} if it is equalized at every $Y \in \cY$.

For an equalized action the closure of any $1$-dim\-ens\-ional orbit is a smooth rational curve, whose $L$-degree may be computed in terms of the weights at its extremal points. (Cf. \cite[Lemma 2.2]{WORS3}, \cite[Corollary 3.2]{RW}):

\begin{lemma}[AM vs. FM]\label{lem:AMvsFM}
Let $(X,L)$ be a polarized pair with an equalized $\C^*$-action, and let $C$ be the closure of a $1$-dimensional orbit, with sink  $x_-$ and source $x_+$. Then $C$ is a smooth rational curve of $L$-degree  equal to $\mu_L(x_+)-\mu_L(x_-)$.
\end{lemma}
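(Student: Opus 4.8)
The plan is to identify the normalization of $C$ with $\PP^1$ carrying the standard $\C^*$-action, and then to reduce the degree formula to a weight computation on $\PP^1$. Concretely, I would pick a point $x\in C$ on the dense open orbit and consider the orbit map $\gamma\colon\C^*\to X$, $t\mapsto t\cdot x$; its image is $C\setminus\{x_-,x_+\}$, and since $X$ is projective the map extends (by the valuative criterion) to a morphism $\ol\gamma\colon\PP^1\to C\subseteq X$ with $\ol\gamma(0)=x_+$ and $\ol\gamma(\infty)=x_-$, equivariant for the $\C^*$-action on $\PP^1$ whose source is $0$ and whose sink is $\infty$. Because the action is equalized at the fixed point components $Y_+\ni x_+$ and $Y_-\ni x_-$ (in fact at every point of the orbit), the isotropy along the orbit is trivial, so $\gamma$ is injective; hence $\ol\gamma$ is finite and birational onto $C$, i.e. it is the normalization.

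The first genuine step is to upgrade $\ol\gamma$ to an isomorphism, which amounts to showing that $C$ is smooth at $x_+$ and $x_-$ (smoothness and injectivity on the open orbit being clear). I would argue locally at $x_+$: the whole orbit $\C^*\cdot x$ lies in the fibre over $x_+$ of the Bia{\l}ynicki-Birula projection $X^+(Y_+)\to Y_+$, $y\mapsto y_+$, and by the affine bundle structure of this cell recalled in Section~\ref{sec:c*} that fibre is the vector space $N^+(Y_+)_{x_+}$, on which $\C^*$ acts linearly \emph{with all weights equal to $+1$} — this is exactly equalization at $Y_+$. On such a vector space the closure of the orbit of a nonzero vector is a line through the origin, so $C$ is smooth at $x_+$ and $d\ol\gamma$ is injective there; the symmetric argument at $x_-$ (using $N^-(Y_-)$ and $t\to\infty$) then yields $C\cong\PP^1$.

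It remains to compute $\deg_L C=\deg\ol\gamma^*L$. Restricting any linearization of $L$ to $C$ gives a linearization of $\ol\gamma^*L\cong\cO_{\PP^1}(n)$, and the $\C^*$-weight on a fibre over a fixed point is intrinsic to the fibre, hence equals the value of the weight map: $\mu_L(x_+)$ over $0$ and $\mu_L(x_-)$ over $\infty$ (here $\mu_L(x_\pm)$ denotes $\mu_L$ of the component $Y_\pm\ni x_\pm$, and the difference below is in any case independent of the chosen linearization). For a $\C^*$-linearized line bundle of degree $n$ on $\PP^1$ with the standard action — as one checks, e.g., from the tautological linearization on $\cO_{\PP^1}(-1)\subset\cO_{\PP^1}^{\oplus 2}$, whose fibre weights at the source and the sink are $0$ and $1$ — the fibre weight at the source minus the fibre weight at the sink equals $n$. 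Therefore $n=\mu_L(x_+)-\mu_L(x_-)$, which is the asserted formula.

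The step I expect to be the crux is the local analysis at the two fixed points: turning ``equalized at $Y_\pm$'' (all normal weights $\pm1$) into the \emph{smoothness} of the orbit closure at $x_\pm$, rather than merely its unibranchedness. Once the equivariant local model $N^\pm(Y_\pm)_{x_\pm}$ is available this is immediate, and it is precisely where the equalization hypothesis is used essentially; the remaining ingredients — the extension of $\gamma$, injectivity on the open orbit, the identification of $\ol\gamma$ with the normalization, and the weight bookkeeping on $\PP^1$ — are formal.
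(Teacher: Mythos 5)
Your argument is correct and is essentially the standard proof of this lemma; the paper does not reprove it but only cites \cite[Lemma 2.2]{WORS3} and \cite[Corollary 3.2]{RW}, where the same strategy is used: extend the orbit map to an equivariant $\PP^1\to C$ which is the normalization, deduce smoothness at $x_\pm$ from the Bia{\l}ynicki-Birula local model $N^\pm(Y_\pm)_{x_\pm}$ on which equalization forces the action to be scalar, and read off the $L$-degree as the difference of the fibre weights of a linearization at the two fixed points. Your sign bookkeeping on $\PP^1$ is consistent with the paper's conventions for sink and source, so there is nothing to correct.
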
 

Using the above Lemma, one can show (see \cite[Corollary 2.4]{OS2}) that: 

\begin{corollary} \label{cor:can} 
Let $X$ be a smooth projective variety  with an equalized $\C^*$-action, and $C$ be the closure of a $1$-dimensional orbit, with sink  in $Y$ and source in  $Y'$. Then:
\[
-K_X \cdot C =\big(\nu^+(Y')-\nu^-(Y')\big)-\big(\nu^+(Y) -\nu^-(Y)\big).
\]
\end{corollary}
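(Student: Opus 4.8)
The plan is to compute $-K_X\cdot C=\deg\big((\det T_X)_{|C}\big)$ by regarding $(\det T_X)_{|C}$ as a $\C^*$-equivariant line bundle on $C\simeq\P^1$ and reading off its degree from the weights of $\C^*$ at the two fixed points $x_-,x_+$ of $C$. The starting observation is that equalization forces the isotropy along the open orbit of $C$ to be trivial, so $C$ is a smooth rational curve (this is part of Lemma \ref{lem:AMvsFM}); moreover $T_{x_-}C$, which points out of the sink, lies in $N^-(Y)_{x_-}$, all of whose weights are $-1$ by equalization at $Y$, and likewise $T_{x_+}C\subset N^+(Y')_{x_+}$ has weight $+1$. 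Thus $C$ carries a $\C^*$-action with tangent weights $-1$ at $x_-$ and $+1$ at $x_+$, and $\deg T_C=2$.

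Next I would fix the $\C^*$-linearization of $-K_X=\det T_X$ induced by the action on $T_X$, and compute its weight at each fixed point. At a point $p$ of a component $Z\in\cY$, the fibre $\det T_pX$ is a one-dimensional representation of $\C^*$ whose weight is the sum of the weights of $\C^*$ on $T_pX$; using $T_pX=T_pZ\oplus N^+(Z)_p\oplus N^-(Z)_p$ together with equalization (weights $0$, $+1$, $-1$ occurring with multiplicities $\dim Z$, $\nu^+(Z)$, $\nu^-(Z)$ respectively), this weight equals $\nu^+(Z)-\nu^-(Z)$. Hence the chosen linearization of $-K_X$ has weight $\nu^+(Y)-\nu^-(Y)$ at $x_-\in Y$ and $\nu^+(Y')-\nu^-(Y')$ at $x_+\in Y'$.

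Then I would invoke the elementary fact underlying Lemma \ref{lem:AMvsFM}: any $\C^*$-equivariant line bundle $M$ on $C\simeq\P^1$, with the action having tangent weights $-1$ at the sink and $+1$ at the source, satisfies $\deg M=w_+(M)-w_-(M)$, where $w_\pm(M)$ is the weight of $\C^*$ on the fibre of $M$ over $x_+$, resp.\ over $x_-$. (One can check this by reducing to $M=\cO_{\P^1}(n)$ with its standard linearizations; equivalently, one can deduce it from the normal bundle sequence $0\to T_C\to (T_X)_{|C}\to N_{C|X}\to 0$ together with $\deg T_C=2$ and the weight description of $N_{C|X}$ at $x_\pm$, or via Atiyah--Bott localization.) Applying this with $M=(-K_X)_{|C}$ and the linearization of the previous step yields
\[
-K_X\cdot C=\big(\nu^+(Y')-\nu^-(Y')\big)-\big(\nu^+(Y)-\nu^-(Y)\big).
\]

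The argument is essentially bookkeeping, and the only genuine obstacle is keeping the sign conventions straight: which fixed point is the sink, the signs of the weights on $N^\pm$, and the orientation in the degree formula $\deg M=w_+(M)-w_-(M)$. I would guard against errors by checking consistency on the baseline example $X=\P^1$ with its standard $\C^*$-action, where $\nu^-(Y)=\nu^+(Y')=1$ and $\nu^+(Y)=\nu^-(Y')=0$, recovering $-K_{\P^1}\cdot\P^1=(1-0)-(0-1)=2$.
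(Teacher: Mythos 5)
Your argument is correct and is essentially the proof the paper intends: Corollary \ref{cor:can} is obtained by applying the weight-difference degree formula underlying Lemma \ref{lem:AMvsFM} to the equivariant line bundle $\det T_X$ restricted to $C$, whose fibre weight at a fixed point of a component $Z$ equals $\nu^+(Z)-\nu^-(Z)$ by equalization. Your sign conventions (tangent weights $-1$ at the sink and $+1$ at the source) agree with the paper's, and the $\P^1$ sanity check confirms the orientation of the degree formula.
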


\subsection{Quotients}\label{sssec:quotients} For  more details on the contents of this section we refer the reader to \cite{WORS3} and \cite{WORS6}. A $\C^*$-action on a polarized variety $(X,L)$ allows to define projective quotients of $X$, as follows. Set, for every $\tau \in [0,\delta]\cap \Q$:
 \[A(\tau):=\bigoplus_{\substack{m\geq 0\\m\tau\in\Z}}\HH^0(X,mL)_{m\tau},\qquad \GX(\tau):=\Proj(A(\tau)).
\]
By \cite[Section 2.2]{WORS6}, $\GX(\tau)$ is the same for any  $\tau\in (a_i,a_{i+1})\cap \Q$, so we set:
\begin{itemize}[itemsep=5pt,topsep=5pt]
\item $\GX_{i,i}:=\GX(\tau)$ for $\tau=a_i$; 
\item $\GX_{i,i+1}:=\GX(\tau)$ for $\tau\in(a_i,a_{i+1})\cap \Q$.
\end{itemize}
Note that $\GX_{i,i+1}$ is a geometric quotient of $X$ for every $i$, while $\GX_{i,i}$ is only semigeometric.  
Furthermore, the varieties $\GX_{i,i+1}$ are birationally equivalent and the natural maps among them fit in the following commutative diagram, where the diagonal arrows are contractions and the horizontal maps are Atiyah flips (in the sense of \cite[Definition 6.1]{WORS1}): 
\begin{equation}
\begin{tikzcd}[
  column sep={3.4em,between origins},
  row sep={2.7em,between origins},
]
&\GX_{0,1} \arrow[rr,dashed] \arrow[rd] \arrow[dl]&&\GX_{1,2} \arrow[dl]&\dots\ \ &\GX_{r-2,r-1} \arrow[rr,dashed] \arrow[rd] &&\GX_{r-1,r}\arrow[rd] \arrow[dl]&\\
\GX_{0,0}&&\GX_{1,1}&\dots&&\dots\ \ &\GX_{r-1,r-1}&&\GX_{r,r}
\end{tikzcd}
\label{eq:GITquot}
\end{equation}

\subsubsection{Prunings} \label{ssec:prun}

Given an equalized $\C^*$-action of criticality $r$ and bandwidth $\delta$ on the smooth polarized pair $(X,L)$,  we consider the blowup $\beta:X^\flat\to X$ of $X$ along the sink and the source $Y_0$, $Y_r$, with exceptional divisors $Y^\flat_0,Y^\flat_r$, and the induced $\C^*$-action.
Given  $\tau_-<\tau_+$ rational numbers in $[0,\delta]$ we set
\[
L(\tau_-,\tau_+):=\beta^*L-\tau_-Y^\flat_0-(\delta-\tau_+)Y^\flat_r.
\]
and define the {\em pruning} of $(X,L)$ with respect to $\tau_-,\tau_+$ as: 
\[X(\tau_-,\tau_+):=\Proj \bigoplus_{m\in k\Z_{\geq 0}}\HH^0(X^\flat,mL(\tau_-,\tau_+))\]
where $k>0$ is the minimum integer such that $k\tau_\pm\in \Z_{> 0}$.
As shown in \cite[Lemma 3.5]{WORS6}, for values $\tau_-,\tau_+\in([0,\delta]\cap\Q)\setminus\{a_0,\dots a_r\}$,  $X(\tau_-,\tau_+)$ depends only on the intervals $(a_i,a_{i+1})$, $ (a_{j-1},a_j)$ containing $\tau_-,\tau_+$, respectively; hence we can set:
$$X_{i,j}:=X(\tau_-,\tau_+)\quad\mbox{whenever } \tau_-\in(a_i,a_{i+1}), \quad\tau_+\in (a_{j-1},a_j).$$ 

\subsubsection{Chow quotient}

Given a projective variety $X$  with a nontrivial $\C^*$-action, the  {\em normalized Chow quotient}  
 of the action is  
the normalization $\CX$ of the closure of the subscheme of $\Chow(X)$ parametrizing the closures of general orbits. 

More precisely, one may find a nonempty open set $V\subset X$ of points $x$ whose orbits $\overline{\C^*\cdot x}$ belong to a single homology class in $X$. By shrinking $V$, if necessary, the geometric quotient of $V$ by the action of $\C^*$ exists. Taking closures in $X$ of the orbits of points of $V$, we obtain an algebraic irreducible family of rational curves in $X$, and a morphism $\psi:V/\C^*\to \Chow(X)$.

The {\em Chow quotient} of $X$ by the action of $\C^*$ is then defined as the closure:
$$
\overline{\CX}:=\overline{\psi(V/\C^*)}\subset \Chow(X).
$$
Its normalization $\CX$ will be called  the {\em normalized Chow quotient} of $X$ by the $\C^*$-action. 
Pulling back the universal family of $\Chow(X)$ to $\CX$ we obtain a family of $1$-dimensional varieties $p:\cU\to \CX$, with evaluation morphism $q:\cU\to X$. 

\begin{remark}\label{rem:chains} 
Let us consider a smooth polarized pair $(X,L)$ endowed with an equalized $\C^*$-action of criticality $r$, with critical values $a_0,\dots,a_r$. By \cite[Lemma~4.3]{WORS6} the fibers of $p$ are reduced $1$-dimensional schemes with smooth irreducible components, whose dual graph is of type $\DA$. Thus any element of the family parametrized by $\CX$ takes the form $C=C_1\cup\dots\cup C_k$, 
where:
\begin{itemize}
\item[(C1)] every $C_j$ is the (smooth) closure of a $1$-dimensional orbit; 
\item[(C2)] the singular points of $C$ are the (transversal) intersections $P_j:=C_j\cap C_{j+1}$, and are fixed points of weights $a_{i_j}$, with  $0<i_1<\dots<i_{k-1}<r$; 
\item[(C3)] every $P_j$ is the source of $C_j$ and the sink of $C_{j+1}$, for $j=1,\dots,k-1$;  
\item[(C4)] the sink of $C_1$ and the source of $C_k$ belong to $Y_0,Y_r$, respectively.
\end{itemize}
Note that property (C4) follows from Lemma \ref{lem:AMvsFM}, together with the fact that the $L$-degree of $C$ equals the $L$-degree of the general element of $\CX$, which is $\delta=a_r-a_0$.
\end{remark}

\begin{remark}\label{rem:Hilb} From this description of the fibers of $p$ one may prove (see \cite[Lemma~4.5]{WORS6}) that the family $p$ is flat. In particular, $\CX$ is isomorphic to the normalization of an irreducible $\C^*$-invariant subvariety of $\Hilb(X)$, that we call the {\em Hilbert quotient} of the action (see \cite[Remark~4.6]{WORS6}). 
\end{remark}

A priori, it is not clear that any $\C^*$-invariant subscheme $C=C_1\cup\dots\cup C_k$ satisfying the properties (C1-C4) listed in Remark \ref{rem:chains} belongs to $\CX$. However, this has been proved to be true, under some mild additional assumptions:

\begin{setup}\label{set:WORS6}
$(X,L)$ is a smooth polarized pair endowed with an equalized $\C^*$-action of criticality $r$, with critical values $0=a_0<\dots<a_r$, such that $\nu^\pm(Y) \ge 2$ for every fixed point component of weight $a_i$, with $2 \le i \le r-2$. 
\end{setup} 

\begin{remark}\label{rem:picone}
The technical assumption on the $\nu^\pm(Y)$'s above is fulfilled if the variety $X$ has Picard number one (cf. \cite[Lemma 2.8]{WORS1} and \cite[Lemma 4.5]{WORS3}).
\end{remark}

\begin{corollary}[{\cite[Corollary~5.9]{WORS6}}]\label{cor:CXcontainsall}
Let $(X,L)$ be as in Setup \ref{set:WORS6}. Then every $\C^*$-invariant $1$-dimensional reduced subscheme $C$ of $X$, with irreducible components $C_1,\dots,C_k$ satisfying the properties (C1--C4)  listed in Remark \ref{rem:chains} is an element of the family parametrized by $\CX$.
\end{corollary}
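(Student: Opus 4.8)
The plan is to exploit the identification of $\CX$ with the normalization of the Hilbert quotient $\cH\subset\Hilb(X)$ of Remark~\ref{rem:Hilb}: since $\cH$ is a closed irreducible subvariety of $\Hilb(X)$, it is enough to realize the given subscheme $C=C_1\cup\dots\cup C_k$ as a flat limit of subschemes already known to lie in $\cH$. I would do this by induction on the number $k$ of components, the case $k=1$ being the base of the induction and the reduction of the number of components from $k-1$ to $k$ the inductive step.

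For $k=1$ the subscheme $C$ is the closure of a single $1$-dimensional orbit with sink in $Y_0$ and source in $Y_r$, hence by Lemma~\ref{lem:AMvsFM} a smooth rational curve of $L$-degree $\delta=a_r$. Every such curve is the orbit closure of a point of the open dense subset $W:=X^-(Y_0)\cap X^+(Y_r)$, which is irreducible and --- since the action is equalized and $Y_0\cap Y_r=\emptyset$ --- disjoint from $X^{\C^*}$, and over $W$ every orbit closure is a smooth rational curve of the fixed $L$-degree $\delta$. Choosing an irreducible curve $B\subset W$ through the given point and meeting $V$, the orbit closures over $B$ form a flat family (constant $L$-degree over a reduced base), hence define a morphism from the normalization of $B$ to $\Hilb(X)$ that maps a dense open subset into $\cH$; as $\cH$ is closed, all of $B$ maps into $\cH$, and in particular so does the Hilbert point of $C$.

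For the inductive step, let $k\ge 2$ and consider the first node $P_1=C_1\cap C_2$, a fixed point of weight $a_{i_1}$ lying in $Y_{i_1}$. In a $\C^*$-equivariant local model near $P_1$, in which $X$ is identified with $T_{P_1}X=T_{P_1}Y_{i_1}\oplus N^+\oplus N^-$ with $\C^*$ acting with weights $0$, $+1$, $-1$ on the three summands (recall the action is equalized), the branches $C_1$ and $C_2$ are the two coordinate axes of a plane $\ell^+\oplus\ell^-$. Replacing these two lines by the pencil of $\C^*$-orbits $\{uv=s\}_{s\in\A^1}$ in that plane and taking closures in $X$ produces a family $\{(C_1C_2)^{(s)}\}$ of $1$-dimensional orbit closures with $(C_1C_2)^{(0)}=C_1\cup C_2$ which, for $s\ne 0$, has sink in $Y_0$ (near the sink of $C_1$) and source near the source of $C_2$. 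When $k=2$ this source lies in $Y_r$ and $(C_1C_2)^{(s)}$ is already a $1$-component chain; when $k\ge 3$, by Lemma~\ref{lem:AMvsFM} and continuity the source is a point $P_2^{(s)}\in Y_{i_2}$ converging to $P_2=C_2\cap C_3$ as $s\to 0$, and, since it has moved off $P_2$, I would \emph{re-adjust} the next component, producing a family $\{C_3^{(s)}\}$ of $1$-dimensional orbit closures with sink $P_2^{(s)}$, with source the unchanged node $P_3$ if $k\ge 4$ or a point of $Y_r$ if $k=3$, and with $C_3^{(0)}=C_3$. Then $\mathcal C_s:=(C_1C_2)^{(s)}\cup C_3^{(s)}\cup C_4\cup\dots\cup C_k$ is, for $s\ne 0$, a reduced connected nodal curve with $k-1$ components, nodes $P_2^{(s)},P_3,\dots,P_{k-1}$ of strictly increasing weights $a_{i_2}<\dots<a_{i_{k-1}}$ and total $L$-degree $\delta$; one checks directly that it satisfies (C1)--(C4), so by the inductive hypothesis $[\mathcal C_s]\in\cH$. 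Since $\mathcal C_0=C$ and $\mathcal C_s\to C$ as $s\to 0$, and $\cH$ is closed with all of its points reduced chains (Remark~\ref{rem:chains}), the limit $[C]$ lies in $\cH$. For $k=2$ the same argument applies with $\mathcal C_s=(C_1C_2)^{(s)}$ and the base case replacing the inductive hypothesis.

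The main obstacle is the inductive step, and inside it the re-adjustment: one must prove that for a point $Q\in Y_{i_2}$ near $P_2$ there is a $1$-dimensional orbit closure from $Q$ to $P_3$ (resp.\ to $Y_r$) close to $C_3$, and that these fit into an algebraic family over $Q$. This is exactly where the hypothesis $\nu^\pm(Y)\ge 2$ for the inner fixed components of weight $a_i$ with $2\le i\le r-2$ is needed: it makes the Bia{\l}ynicki-Birula cells through the relevant inner fixed points large enough that the sink of $C_3$ can be moved freely within $Y_{i_2}$ while its source is held fixed --- note that $i_2\ge 2$ always, that $i_2\le r-2$ whenever $k\ge 4$, and that when $k=3$ the source of $C_3$ lies in the flexible component $Y_r$, so only components covered by the hypothesis are ever used. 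A second, more routine, point is to check that the $\mathcal C_s$ assemble into a genuinely flat family over $\A^1$ whose central fibre is the reduced chain $C$ with no embedded points at the nodes; for this one reuses the description of the fibres of $p$ and the flatness of $p$ established in Remarks~\ref{rem:chains} and~\ref{rem:Hilb}.
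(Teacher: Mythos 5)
Your overall strategy (realize $C$ as a flat limit of chains with fewer components, using that the Hilbert quotient is closed) is genuinely different from the paper's, which simply invokes the recursive inverse-limit description of $\CX$ over the GIT quotients $\GX_{i,j}$ from \cite[Corollary~5.9]{WORS6}; and your base case and the case $k=2$ are essentially fine (for $k=2$ one can even close the argument cleanly: the limit cycle of the smoothed orbits contains $C_1+C_2$, which already has the maximal degree $\delta$, so it equals $C_1+C_2$). The problem is the inductive step for $k\ge 3$, and specifically a claim you pass over with the words ``by Lemma~\ref{lem:AMvsFM} and continuity'': that the source of the smoothed orbit $(C_1C_2)^{(s)}$ is a point $P_2^{(s)}\in Y_{i_2}$ converging to $P_2$. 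The source map $x\mapsto x_+$ is \emph{not} continuous at points whose source lies in an inner fixed component: the cell $X^+(Y_{i_2})$ is locally closed of codimension $\nu^-(Y_{i_2})>0$, and the function $x\mapsto \mu_L(x_+)$ is only lower semicontinuous, so a perturbation of a point of $C_2\setminus\{P_1,P_2\}$ generically has source of weight strictly larger than $a_{i_2}$ (typically $a_r$). Your local model at $P_1$ controls the smoothed orbit only in a neighbourhood of $P_1$; once $O_s$ leaves that chart along the branch of $C_2$, nothing forces it to terminate at $Y_{i_2}$. If the source does jump to a higher component, then $\deg \overline{O_s}=\mu_L(\mathrm{src})-\mu_L(\mathrm{snk})>a_{i_2}-a_0=\deg(C_1+C_2)$, the limit cycle acquires extra components beyond $C_1+C_2$, the point $P_2^{(s)}$ you need for the re-adjustment of $C_3$ does not exist, and the total degree of your candidate chain $\mathcal C_s$ would exceed $\delta$. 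Note that smoothing the last node instead only trades the problem for its mirror image (the sink weight is upper semicontinuous), so only the two extremal nodes of a chain can be smoothed by this naive perturbation; the induction genuinely breaks at $k=3$.

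You correctly sense that the ``main obstacle'' is in the inductive step, but you locate it one step too late: the re-adjustment of $C_3$ (for which you invoke $\nu^\pm\ge 2$) presupposes the existence and convergence of $P_2^{(s)}$, which is the real issue. This is precisely the difficulty that the quotient picture is designed to circumvent: each component $C_j$ of the chain is recorded as a point of the geometric quotient $\GX_{i_{j-1},i_j}$ (more precisely of the relevant $\GX_{m,m+1}$'s), conditions (C2)--(C3) say these points are compatible over the semigeometric quotients $\GX_{m,m}$, and the normalized-Cartesian-square structure of Diagram~(\ref{fig:Chow0}) then produces the point of $\CX$ without ever having to control where a perturbed orbit flows. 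To salvage your approach you would need an additional argument pinning the source of $O_s$ to $Y_{i_2}$ --- for instance by showing $O_s$ stays inside a suitable closed invariant subvariety whose source component is $Y_{i_2}$ --- and this is not supplied by the hypotheses of Setup~\ref{set:WORS6} in any evident way.
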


The proof  uses the recursive description of $\CX$ given in \cite[Theorem 1.1 and Corollary 7.1]{WORS6}, which we now recall:

\begin{theorem}\label{thm:rhombuses}
Let $(X,L)$ be as in Setup \ref{set:WORS6}. Denote by $X_{i,j}$, $0\leq i<j\leq r$, the corresponding prunings of $X$, and by $\CX_{i,j}$ their normalized Chow quotients. Then $\CX=\CX_{0,r}$, and we have a commutative diagram:
\begin{equation}\label{fig:Chow0}
\begin{tikzcd}[
  column sep={2.45em,between origins},
  row sep={2.7em,between origins}]
&&&&&&\CX_{0,r}\arrow[rd,"\prur"] \arrow[dl,"\prul",labels=above left] &&&&&&\\
&&&&&\CX_{0,r-1}\arrow[rd,"\prur"] \arrow[dl,"\prul",labels=above left]&&\CX_{1,r}\arrow[rd,"\prur"] \arrow[dl,"\prul",labels=above left]&&&&&\\
&&&&\CX_{0,r-2} \arrow[dl,"\prul",labels=above left]\arrow[rd,"\prur"]&&\CX_{1,r-1}\arrow[rd,"\prur"] \arrow[dl,"\prul",labels=above left]&&\CX_{2,r}\arrow[dl,"\prul",labels=above left]\arrow[rd,"\prur"]&&&&\\
&&& \arrow[ld,dotted,no head]&& \arrow[ld,dotted,no head]&&\arrow[rd,dotted,no head]&&\arrow[rd,dotted,no head]&&&\\
&&\CX_{0,2}\arrow[rd,"\prur"] \arrow[dl,"\prul",labels=above left]&&\CX_{1,3} \arrow[dl,"\prul",labels=above left]&&\dots&&\CX_{r-3,r-1}\arrow[rd,"\prur"] &&\CX_{r-2,r}\arrow[rd,"\prur"] \arrow[dl,"\prul",labels=above left]&&\\
&\GX_{0,1}\arrow[ld]\arrow[rd]&&\GX_{1,2}\arrow[ld]\arrow[rd]&&\dots&&\dots&&\GX_{r-2,r-1}\arrow[ld]\arrow[rd]&&\GX_{r-1,r}\arrow[ld]\arrow[rd]&\\
\GX_{0,0}&&\GX_{1,1}&&\GX_{2,2}&&\dots&&\GX_{r-2,r-2}&&\GX_{r-1,r-1}&&\GX_{r,r}
\end{tikzcd}
\end{equation}
where all the arrows denoted by $\prul,\prur$ are blowups and every rhombus in the diagram is a normalized Cartesian square.
\end{theorem}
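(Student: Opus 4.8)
Although Theorem~\ref{thm:rhombuses} is proved in \cite{WORS6}, let me describe the line of argument I would follow. The plan is to induct on the criticality $r$. The case $r=1$ is empty: there are no $\prul,\prur$-arrows and no rhombuses, $\CX_{0,1}=\GX_{0,1}$ is the geometric quotient of the criticality‑$1$ pruning $X_{0,1}$, and the two bottom maps $\GX_{0,1}\to\GX_{0,0}$ and $\GX_{0,1}\to\GX_{1,1}$ are the contractions of \eqref{eq:GITquot}. For $r\ge 2$, the first step is a \emph{pruning‑of‑prunings} lemma: the prunings of $X_{0,r-1}$ (resp.\ of $X_{1,r}$) are precisely the $X_{i,j}$ with $0\le i<j\le r-1$ (resp.\ $1\le i<j\le r$), each carrying an equalized $\C^*$-action of criticality $<r$ that still satisfies Setup~\ref{set:WORS6}. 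Granting this, the induction hypothesis applied to $X_{0,r-1}$ and to $X_{1,r}$ produces the two lower sub‑diagrams of \eqref{fig:Chow0}; they agree on their common part (the maps $\prul,\prur$ between two given $\CX_{i,j}$'s are intrinsic to those prunings, not to the ambient one), and together they fill in everything except the topmost rhombus $\CX_{0,r},\CX_{0,r-1},\CX_{1,r},\CX_{1,r-1}$ and the two top arrows $\prul\colon\CX_{0,r}\to\CX_{0,r-1}$ and $\prur\colon\CX_{0,r}\to\CX_{1,r}$. So it remains to prove: $(\mathrm{i})$ $\CX\cong\CX_{0,r}$; $(\mathrm{ii})$ the two top arrows are morphisms, and blowups; $(\mathrm{iii})$ the top rhombus is a normalized Cartesian square; $(\mathrm{iv})$ the whole diagram commutes, which is formal given $(\mathrm{iii})$ and the inductive sub‑diagrams.

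For $(\mathrm{i})$ I would compare the Chow quotient of $X$ with that of $X_{0,r}$. The pruning $X_{0,r}$ is obtained from the blowup $X^\flat$ of $X$ along $Y_0\sqcup Y_r$ by a contraction which is an isomorphism over the open set swept out by the general orbit: the general orbit closure meets each of $Y_0,Y_r$ in a single reduced point, so its proper transform in $X^\flat$ is again a smooth orbit closure and survives to $X_{0,r}$. Thus $X_{0,r}$ and $X$ have the same general orbit, and proper pushforward of cycles identifies the Chow quotient of $X_{0,r}$ with that of $X$ before normalization; normalizing gives $(\mathrm{i})$. To build $\prul$ (and symmetrically $\prur$): $X_{0,r}$ and $X_{0,r-1}$ differ by crossing the single critical value $a_{r-1}$ in the source‑parameter $\tau_+$, hence are birational, and since their general orbit closures correspond we get a birational map $\CX_{0,r}\dashrightarrow\CX_{0,r-1}$. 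That it extends to a morphism I would prove with the valuative criterion: a one‑parameter degeneration of chains in $X_{0,r}$ has, by flatness of the universal family over the Hilbert quotient (Remark~\ref{rem:Hilb}) and properness of $\Hilb$, a well‑defined flat limit cycle; its transform in $X_{0,r-1}$ under the birational modification (on chains: contract the source‑most component when it lies over the modified locus) is an admissible chain in the sense of Remark~\ref{rem:chains}, hence lies in $\CX_{0,r-1}$ by Corollary~\ref{cor:CXcontainsall}; since $\CX_{0,r}$ is normal, the rational map is a morphism. Surjectivity holds because every admissible chain of $X_{0,r-1}$ completes to an admissible chain of $X_{0,r}$, again in $\CX_{0,r}$ by Corollary~\ref{cor:CXcontainsall}.

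For $(\mathrm{iii})$: both composites $\CX_{0,r}\to\CX_{0,r-1}\to\CX_{1,r-1}$ and $\CX_{0,r}\to\CX_{1,r}\to\CX_{1,r-1}$ send a chain $C=C_1\cup\dots\cup C_k$ to the chain obtained by discarding at once its sink‑most and source‑most components over the two pruned loci, so the square commutes and we obtain a morphism $\Phi$ from $\CX_{0,r}$ to the normalization $W$ of $\CX_{0,r-1}\times_{\CX_{1,r-1}}\CX_{1,r}$. Now $\Phi$ is proper and birational (all four vertices are birational to the common geometric quotient, and $\Phi$ is an isomorphism over the locus of irreducible chains) with normal target, so by Zariski's main theorem it suffices to see $\Phi$ is quasi‑finite. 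This follows from the chain description: a chain of $X_{0,r}$ is reconstructed from its image in $\CX_{0,r-1}$ (recording its shape near $Y_0$ and in the middle) together with its image in $\CX_{1,r}$ (recording its shape near $Y_r$ and in the middle), the two agreeing on the middle part; hence $\Phi$ is injective on points, so quasi‑finite, so an isomorphism. Commutativity of the full diagram, $(\mathrm{iv})$, then follows rhombus by rhombus, the top one as above and the rest by induction.

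It remains to treat $(\mathrm{ii})$. Being projective birational morphisms of normal varieties, $\prul$ and $\prur$ are automatically blowups of \emph{some} coherent ideal sheaf, so the statement as recalled is soft here; the substance — and the step I expect to be the main obstacle — is to identify the center of $\prul$ explicitly as (the reduced structure on) a closed subvariety $Z\subset\CX_{0,r-1}$, namely the locus of chains admitting an extra source‑component over the inner fixed component $Y_{r-1}$, and to describe the fibers of $\prul$ over $Z$ as the projective spaces of orbit closures one can attach at the source, matching the projectivized normal bundle of $Z$. The delicate point is the local analysis of $\CX_{0,r}$ along the exceptional locus of $\prul$: showing it is cut out \emph{exactly} by the expected ideal — equivalently, that $Z$ is smooth, that $\prul$ is the blowup of $\CX_{0,r-1}$ along $Z$, and that $\CX_{0,r}$ is smooth — requires the deformation theory of $\C^*$-invariant chains, the convexity hypothesis entering to guarantee that first‑order deformations of a chain are unobstructed and may be chosen $\C^*$-invariant. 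For the boundary rows ($i=0$ or $j=r$) this smooth‑center refinement is \cite[Propositions~6.4 and 6.5]{WORS6}; extending it to every rhombus of \eqref{fig:Chow0} is exactly Theorem~\ref{thm:smoothpart} of the present paper.
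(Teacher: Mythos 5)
This theorem is not proved in the present paper: it is imported verbatim from \cite{WORS6} (Theorem 1.1 and Corollary 7.1 there), so there is no internal proof to compare your sketch against. Judged on its own, your outline has the right main ingredients --- induction via prunings, transforming chains to define the maps between partial Chow quotients, Zariski's main theorem for the normalized Cartesian squares, and the observation that ``blowup'' in the weak sense is automatic for a projective birational morphism onto a quasi-projective variety --- but two steps would not go through as written.

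First, your induction hinges on the ``pruning-of-prunings'' lemma asserting that $X_{0,r-1}$ and $X_{1,r}$ \emph{again satisfy Setup \ref{set:WORS6}}, which requires them to be smooth polarized pairs. Prunings of a smooth variety are in general only normal: they are obtained from the blowup $X^\flat$ by a $\Proj$ of a section ring, i.e.\ by contractions and flips, and nothing guarantees smoothness of the result. Since the whole statement of the theorem (chain description of the fibers, Corollary \ref{cor:CXcontainsall}, the identification $\CX_{i,j}=\CX$ of the pruning with its Chow quotient) is being re-applied to $X_{0,r-1}$ as if it were a fresh instance of the Setup, the induction collapses unless you either prove smoothness of the prunings (doubtful) or reformulate the recursion so that all deformation-theoretic and BB-cell arguments take place on $X$ itself, with the $\CX_{i,j}$ described intrinsically as images of $\CX$ under truncation of chains. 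The latter is what makes the actual argument in \cite{WORS6} delicate. Second, the step ``the flat limit exists, its transform is admissible, and since $\CX_{0,r}$ is normal the rational map is a morphism'' is not a valid implication by itself: unique limits along arcs do not produce a morphism (the inverse of a blowup is the standard counterexample). What saves you is that the transformed chain depends only on the limit point of $\CX_{0,r}$ and not on the arc, so the projection from the closure of the graph to $\CX_{0,r}$ is proper, birational and injective, and normality of $\CX_{0,r}$ plus Zariski's main theorem then gives the morphism; alternatively, and more cleanly, one transforms the universal family over $\CX_{0,r}$ into a family of cycles on $X_{0,r-1}$ and invokes the universal property of the Chow variety. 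Either route should be made explicit. The remaining points (commutativity, quasi-finiteness of $\Phi$ via reconstruction of a chain from its two truncations, and deferral of the smooth-center refinement to Theorem \ref{thm:smoothpart}) are sound.
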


In categorical language, we may simply say that $\CX$ is the normalized inverse limit of the GIT quotients:

\[\begin{tikzcd}[
  column sep={2.5em,between origins},
  row sep={2.9em,between origins}]
&\GX_{0,1}\arrow[ld]\arrow[rd]&&\GX_{1,2}\arrow[ld]\arrow[rd]&&\dots&&\dots&&\GX_{r-2,r-1}\arrow[ld]\arrow[rd]&&\GX_{r-1,r}\arrow[ld]\arrow[rd]&\\
\GX_{0,0}&&\GX_{1,1}&&\GX_{2,2}&&\dots&&\GX_{r-2,r-2}&&\GX_{r-1,r-1}&&\GX_{r,r}
\end{tikzcd}\]

The centers of the blowups $\prul,\prur$ appearing in Diagram (\ref{fig:Chow0}) have been described in \cite[Remark~5.10]{WORS6}; they are smooth for the maps appearing in the first layer right above the geometric quotients (see \cite[Lemma~5.7]{WORS6}):
\[\begin{tikzcd}[
  column sep={2.6em,between origins},
  row sep={2.9em,between origins}]
&\CX_{0,2}\arrow[ld,"\prul",labels=above left]\arrow[rd,"\prur"]&
&\CX_{1,3} \arrow[dl,"\prul",labels=above left]&&\dots&&\CX_{r-3,r-1}\arrow[rd,"\prur"] &
&\GX_{r-2,r}\arrow[ld,"\prul",labels=above left]\arrow[rd,"\prur"]&\\
\GX_{0,1}&&\GX_{1,2}&
&\dots&&\dots&
&\GX_{r-2,r-1}&&\GX_{r-1,r}
\end{tikzcd}\]
For the map in successive layers, the centers are not necessarily smooth, as shown in \cite[Example 6.1]{WORS6}). However the situation improves under certain hypotheses. Let us recall the following:

\begin{definition}\label{def:convex}
A smooth projective variety $X$ is said to be {\em convex} if for every morphism $\mu:\P^1\to X$ we have that $\HH^1(\P^1,\mu^*T_X)=0$. 
\end{definition}

It has been shown in \cite[Theorem~1.2]{WORS6} that:

\begin{theorem}\label{thm:WORS6smooth}
Let $(X,L)$ be as in Setup \ref{set:WORS6}. Assume moreover that $X$ is convex. Then the varieties $\CX_{i,j}$ for $i=0$ or $j=r$ are smooth, and the maps 
\[\xymatrix{\GX_{0,1}&\CX_{0,2}\ar[l]_(0.44)s&\dots\ar[l]_(0.42)s&\CX\ar[l]_(0.42)s\ar[r]^(0.44)d&\dots\ar[r]^(0.40)d&\CX_{r-2,r}
\ar[r]^(0.46)d&\GX_{r-1,r}}\]
are blowups with smooth centers.
\end{theorem}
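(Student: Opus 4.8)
The plan is to induct along the two ``boundary'' paths of the diagram of Theorem~\ref{thm:rhombuses},
\[\GX_{0,1}\xleftarrow{\prul}\CX_{0,2}\xleftarrow{\prul}\cdots\xleftarrow{\prul}\CX_{0,r}=\CX\xrightarrow{\prur}\CX_{1,r}\xrightarrow{\prur}\cdots\xrightarrow{\prur}\CX_{r-2,r}\xrightarrow{\prur}\GX_{r-1,r},\]
knowing that, by that theorem, every arrow $\prul,\prur$ displayed here is a blowup. Since Setup~\ref{set:WORS6} is symmetric under exchanging sink and source, it is enough to treat the left half: show by induction on $j$ that $\CX_{0,j}$ is smooth for $2\le j\le r$ and that $\prul\colon\CX_{0,j}\to\CX_{0,j-1}$ (with the convention $\CX_{0,1}:=\GX_{0,1}$) is the blowup of a smooth centre. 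This covers $\CX=\CX_{0,r}$, the common endpoint of the two paths. For $j=2$: the geometric quotient $\GX_{0,1}$ is smooth, since the equalized action has trivial stabilizers on the stable locus, and $\CX_{0,2}\to\GX_{0,1}$ is the blowup of a smooth centre by \cite[Lemma~5.7]{WORS6}; hence $\CX_{0,2}$ is smooth.

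The inductive step reduces the smoothness of $\CX_{0,j}$ to the smoothness of the centre $Z\subset\CX_{0,j-1}$ of the blowup $\prul\colon\CX_{0,j}\to\CX_{0,j-1}$, since blowing up a smooth variety along a smooth centre yields a smooth variety. The centre $Z$ is described in \cite[Remark~5.10]{WORS6}: it is the locus parametrizing the $\C^*$-invariant cycles that, in the language of Remark~\ref{rem:chains}, acquire a node over the fixed component of weight $a_{j-1}$ --- equivalently, the image in $\CX_{0,j-1}$ of a component of the boundary divisor of $\CX_{0,j}$ --- and \emph{loc.\ cit.}\ moreover presents $Z$ as a fibre product of partial Chow quotients that are already known to be smooth, by the inductive hypothesis together with \cite[Lemma~5.7]{WORS6}. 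What remains to be checked is that this fibre product is clean (in particular reduced), so that $Z$ is smooth; this is where convexity enters.

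The key point supplied by convexity is a cohomological vanishing: for every chain $C=C_1\cup\cdots\cup C_k$ of the type arising here --- each $C_i\cong\P^1$ smoothly embedded, with transversal nodes $P_1,\dots,P_{k-1}$ and inclusions $\mu_i\colon\P^1\hookrightarrow X$ --- one has $\HH^1(C,N_{C/X})=0$. Indeed, convexity gives $\HH^1(\P^1,\mu_i^*T_X)=0$, hence $\HH^1(C_i,N_{C_i/X})=0$ since $N_{C_i/X}$ is a quotient of $\mu_i^*T_X$, and hence also $\HH^1(C_i,N_{C/X}|_{C_i})=0$, as $N_{C/X}|_{C_i}$ contains $N_{C_i/X}$ with torsion quotient supported at the nodes on $C_i$; feeding this --- together with the $\C^*$-weight decomposition of $N_{C/X}$ along each $C_i$ forced by the equalized action --- into the normalization exact sequence
\[0\to N_{C/X}\to\bigoplus_{i=1}^{k}N_{C/X}|_{C_i}\to\bigoplus_{j=1}^{k-1}(N_{C/X})_{P_j}\to 0\]
yields $\HH^1(C,N_{C/X})=0$. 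Consequently $\Hilb(X)$ is smooth at every such $[C]$, so the torus-fixed locus $\Hilb(X)^{\C^*}$ is smooth there; since, by Remark~\ref{rem:Hilb}, the Hilbert quotient is an irreducible subvariety of $\Hilb(X)^{\C^*}$ of the expected dimension all of whose points are chains of this kind, it is a connected component of $\Hilb(X)^{\C^*}$, hence smooth, and $\CX$ --- its normalization --- is smooth. The same vanishing, applied to the chains parametrized by the fibre-product factors of $Z$, shows that the fibre product defining $Z$ is transverse, so $Z$ is smooth; then $\CX_{0,j}=\Bl_Z\CX_{0,j-1}$ is smooth and $\prul$ is the blowup of a smooth centre. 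Running the mirror induction along the right path completes the proof.

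The step I expect to be the real obstacle is the book-keeping in the inductive step: making the description of the centres $Z$ from \cite[Remark~5.10]{WORS6} explicit enough to identify them with fibre products of smooth partial quotients, and then deducing cleanness from the convexity vanishing above --- this is exactly what forces the hypothesis of convexity on $X$ (rather than on the prunings $X_{i,j}$, which need not be convex), and \cite[Example~6.1]{WORS6} shows that some such input is unavoidable. By contrast, the cohomology in the third paragraph --- the normalization-sequence computation of $\HH^1(C,N_{C/X})$, using the weights imposed by the equalized action --- is, once set up, routine.
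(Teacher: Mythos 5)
Note first that the paper does not prove this statement: it is imported wholesale from \cite[Theorem~1.2]{WORS6}, and the only in-text indication of the argument is Remark \ref{rem:Hilb2}. Your third paragraph --- convexity forces $\HH^1(C,N_{C/X})=0$ for every chain $C$ parametrized by $\CX$, hence $\Hilb(X)$ is smooth along $\CX$, hence $\CX$ is (the normalization of) a smooth $\C^*$-fixed point component --- is exactly the mechanism that Remark \ref{rem:Hilb2} attributes to \cite[Proposition~6.4]{WORS6}, so that part of your proposal is on target. One small caveat there: in the normalization sequence you also need surjectivity of $\bigoplus_i\HH^0(C_i,N_{C/X}|_{C_i})\to\bigoplus_j(N_{C/X})_{P_j}$, which requires global generation at the nodes, not just $\HH^1$-vanishing on the components; this does follow from convexity (a summand $\cO(-1)$ in $\mu^*T_X$ would produce $\HH^1\neq 0$ after precomposing with a double cover), but the phrase ``together with the $\C^*$-weight decomposition'' is a placeholder, not an argument.

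The genuine gap is in your inductive step, i.e.\ precisely where the theorem says more than ``$\CX$ is smooth''. Your ascending induction needs the centre $Z\subset\CX_{0,j-1}$ of $\prul\colon\CX_{0,j}\to\CX_{0,j-1}$ to be smooth, and you dispose of this by asserting that \cite[Remark~5.10]{WORS6} exhibits $Z$ as a fibre product of smooth partial quotients and that ``the same vanishing'' makes that fibre product transverse. Neither claim is substantiated, and the second cannot be taken directly from your paragraph~3: the centre of $\CX_{0,j-1}\leftarrow\CX_{0,j}$ is the locus of cycles in the pruning $X_{0,j-1}$ whose strict transforms have source in $Y_{j-1}$ --- concretely, a successive strict transform of (the trace of) $B^+(Y_{j-1})$, as in Examples \ref{ex:classicex1}--\ref{ex:classicex3} --- sitting inside the Chow quotient of $X_{0,j-1}$, and the prunings are not convex, so the $\HH^1$-vanishing does not transfer to their Hilbert schemes. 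That these centres can indeed be singular without convexity is the content of \cite[Example~6.1]{WORS6}, so this is the hard point of the theorem and it is left open in your write-up. A workable way to close it --- and the strategy this paper itself uses for the analogous extension in Theorem \ref{thm:smoothpart} --- is to reverse the direction of the induction: having established that $\CX=\CX_{0,r}$ is smooth via the Hilbert scheme, descend along the edge using the Cartesian rhombi of Theorem \ref{thm:rhombuses} to identify the nontrivial fibres of each elementary divisorial contraction $\CX_{0,j}\to\CX_{0,j-1}$ with projective spaces $F$ satisfying $-K\cdot\ell=\dim F$ for a line $\ell\subset F$, and invoke the numerical characterization of smooth blowups \cite[Theorem~5.1]{AO2}, as in Lemma \ref{lem:smoothblow}; smoothness of the target and of the centre then comes out of that criterion rather than being an input.
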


Let us illustrate the theorem above in the examples quoted in the introduction. 

\begin{example}\label{ex:classicex1}
The space of complete collineations from a vector space of dimension $m$ to a vector space of dimension $n$, $n\leq m$, is the Chow quotient $\CX$ of the Grassmannian of $(n-1)$-dimensional linear subspaces of $\P^{n+m-1}$,  that in the language of marked Dynkin diagrams we denote by $X=\DA_{n+m-1}(n)$. We consider the action on this Grassmannian induced by the equalized action on $\P^{n+m-1}$ whose sink and source are  $\P^{n-1}$ and  $\P^{m-1}$. Following \cite{WORS5,FS}, one may easily obtain a description of all the fixed point components of the action. In particular, one sees that the sink of the action on $X$ is a point, so $\GX_{0,1}$ is the projective space $\P(\C^n\otimes\C^m)$. Then $\CX_{0,2},\dots,\CX_{0,n}=\CX$ are obtained  by blowing up $\GX_{0,1}$ along the successive strict transforms of the traces of the closures of the BB-cells $X^+(Y_1), \dots, X^+(Y_{n-1})$. These are precisely the successive strict transforms of the classes of matrices of rank $1,\dots,n-1$ in $\P(\C^n\otimes\C^m)$, and so we recover the classical description of the space of collineations appearing in \cite{Vai84}.
\end{example}

\begin{example}\label{ex:classicex2}
If we start with a quadric of equation \[x_0x_{n}+\dots+x_{n-1}x_{2n-1}=0\] in $\P^{2n-1}$, endowed with the equalized $\C^*$-action whose sink and source are the linear subspaces \[Y_-:x_n=\dots=x_{2n-1}=0,\quad\mbox{and }Y_+:x_0=\dots=x_{n-1}=0,\]
respectively, we may consider the induced action on the spinor variety $X=\DD_n(n)$ --more precisely,  the one parametrizing the deformations of $Y_-$--. The point corresponding to $Y_-$ is the sink of the action on $\DD_n(n)$ and the inner fixed point components, as described in \cite{FS}, are Grassmannians, $\DA_{n-1}(2s)$. The first geometric quotient $\GX_{0,1}$ is the projectivization of the tangent space of $X$ at $Y_-$, which is $\P(\bigwedge^2\C^n)$. As in the previous example, $\CX$ is then obtained by blowing up successively  this projective space along the  strict transforms of the traces of the closures of the BB-cells $X^+(Y_1), \dots, X^+(Y_{\lfloor(n-1)/2\rfloor})$, and one may check that these are precisely the strict transforms of the loci of classes of skew-forms $w\in \bigwedge^2\C^n$ of ranks $2,\dots, 2\lfloor(n-1)/2\rfloor$. This is the standard construction of the space of complete skew-forms, as presented in \cite{Thaddeus}.
\end{example}

\begin{example}\label{ex:classicex3}
Similarly, we may start with a Lagrangian Grassmannian $X=\DC_n(n)$ parametrizing Lagrangian $\P^{n-1}$'s in a projective space $\P^{2n-1}$ with respect to a nondegenerate skew-symmetric form. We may assume that \[Y_-:x_n=\dots=x_{2n-1}=0,\quad\mbox{and }Y_+:x_0=\dots=x_{n-1}=0,\] are Lagrangian, and consider the action on $X$ induced by the equalized $\C^*$-action on $\P^{2n-1}$ whose sink and source are $Y_\pm$. The sink of the action in $X$ is the point parametrizing $Y_-$, the geometric quotient $\GX_{0,1}$ is $\P(S^2\C^n)$ and the inner fixed components are Grassmannians $\DA_{n-1}(1),\dots,\DA_{n-1}(n-1)$ (see \cite{FS}). By a similar argument, we obtain that $\CX$ is the successive blowup of $\P(S^2\C^n)$ along the strict transforms of the classes of elements $q\in S^2\C^n$ of ranks $1,2,\dots n-1$, which again coincides with the classical description given in \cite{Vai82}. 
\end{example}


\section{Chow quotients of convex varieties of Picard number one}\label{sec:boundaryprimal}

Throughout the rest of the paper we will work in the following:

\begin{setup}\label{set:main}
$X$ is a smooth convex variety of Picard number one, endowed with an equalized nontrivial $\C^*$-action of criticality $r$. 
We will denote by  $\cM$ a family of rational curves of minimal degree on $X$ (which exists since $X$ is uniruled), by $p:\cU \to \cM$ its universal family, and by $q: \cU \to X$ the evaluation morphism (see \cite[II.2.11]{kollar}). 
\end{setup}

\begin{remark}\label{rem:convexcurves}
Since $X$ is convex, $\cM$ is {\em beautiful}, i.e., the evaluation morphism $q: \cU \to X$ is smooth and surjective (see \cite[Section 2.7]{OS2} for further details); we will denote by $\cU_x$ the fiber of $q$ over the point $x \in X$. Note that, since $X$ is a Fano manifold (in particular simply connected), then  every $\cU_x$ is connected, therefore irreducible (cf. \cite[Proposition~1.6]{Kan5}). 
\end{remark}

Let us recall some results recently obtained in \cite{OS2}. 
The $\C^*$-action on $X$ induces actions on $\cM$ and $\cU$ such that $p$ and $q$ are equivariant morphisms. These actions are equalized (\cite[Propositions 4.5, 4.8]{OS2}); in particular, the closure of every  $1$-dimensional orbit in $\cM$ and $\cU$ is a smooth rational curve.

\begin{remark}\label{rem:f2}
Given a rational curve $\Gamma \subset \cM$ which is the closure of a one dimensional orbit, then $S_\Gamma:=p^{-1}(\Gamma)$ is a rational ruled surface $\FF_e$ with $e \le 2$ (\cite[Proposition 4.6]{OS2}). By the same Proposition, if no point of $\Gamma$ parametrizes
a curve which is fixed by the action on $X$, then $e=0$ or $e=2$ and the action has four fixed points (see \cite[Figure 4.1]{OS2}). 
\end{remark}
 
 The following statement has been proved in \cite[Theorem 5.3]{OS2}.
 
 \begin{proposition}\label{prop:onecomp}
 In the situation of Setup \ref{set:main},  
denoting by $L$ the ample generator of $\Pic(X)$,  the $L$-weights of the action are $0 < d < 2d < \dots < (r-1)d < rd = \delta$, and, for every $j =0, \dots, r$  there exists a unique  $Y_j \in \cY$ such that $\mu_L(Y_j)=jd$.
\end{proposition}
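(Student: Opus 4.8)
The plan is to split the statement into two parts: (1) the $L$-weights form an arithmetic progression $0, d, 2d, \dots, rd$ with common difference $d$; and (2) for each value $jd$ there is a \emph{unique} fixed point component $Y_j$ with that weight. I would derive both from the structure of deformations of $\C^*$-invariant rational curves together with the convexity hypothesis.

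For part (1): recall that $\cM$ is a family of rational curves of minimal degree, so $L\cdot C = d$ for every $C\in\cM$, where $d$ is the minimal $L$-degree; the general orbit closure decomposes (via the flat family $\cU\to\CX$ described in Remark~\ref{rem:chains}) into a chain $C_1\cup\dots\cup C_k$ of orbit closures. The $L$-degree of the whole chain is $\delta = a_r$, and by Lemma~\ref{lem:AMvsFM} each $C_i$ has $L$-degree equal to the difference of the $L$-weights at its source and sink. Now I would argue that every segment of this general chain lies in $\cM$ (uses minimality plus the fact, from Remark~\ref{rem:f2} and the convexity/beautifulness of $\cM$, that orbit closures of the generic chain are of minimal degree — this is essentially \cite[Theorem 5.3]{OS2}, which is cited as the source of the Proposition). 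Since consecutive fixed points of the chain run through values $a_{i_0}=0 < a_{i_1} < \dots < a_{i_k}=a_r$ and each consecutive difference equals $d$, one gets $a_r = kd$; moreover, as we vary the chain and let components degenerate, every critical value $a_i$ is hit by some breaking point, so $\{a_0,\dots,a_r\}=\{0,d,\dots,kd\}$, forcing $k=r$ and $a_j = jd$.

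For part (2): suppose two distinct components $Y, Y'$ had the same $L$-weight $jd$ with $0<j<r$. Take a general orbit closure through a general point; it passes through $Y_0$ and $Y_r$, and by Corollary~\ref{cor:CXcontainsall} (applicable since $X$ has Picard number one, hence Setup~\ref{set:WORS6} holds by Remark~\ref{rem:picone}) one can build $\C^*$-invariant chains realizing prescribed sequences of inner fixed points. Using Remark~\ref{rem:convexcurves} — $\cU_x$ is irreducible for every $x$ — together with the connectedness coming from convexity, I would show that the source/sink maps from the universal family $\cU$ force the set of fixed points reachable at weight $jd$ to be connected, hence a single component. Concretely: the locus $B^+(Y)\cap B^-(Y')$-type incidence, combined with the fact that the general orbit meets a \emph{unique} chain of fixed components, shows any two weight-$jd$ components would be joined by a curve inside $X^{\C^*}$ (an orbit of the residual action is trivial on fixed points), contradicting that they are distinct irreducible components; alternatively, one derives it directly from the rigidity of the Bia\l ynicki--Birula decomposition under the Picard-number-one hypothesis.

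The main obstacle I expect is part (2): showing uniqueness of the fixed component at each weight. The arithmetic-progression statement is a fairly direct bookkeeping consequence of Lemma~\ref{lem:AMvsFM} and minimality of $d$, but ruling out two components of the same weight genuinely needs the global input — Picard number one plus convexity — and the cleanest route is to invoke the already-proved \cite[Theorem~5.3]{OS2} rather than re-derive it; if a self-contained argument is wanted, the delicate point is controlling how the irreducible family $\cM$ of minimal curves sweeps out $X$ and connects all fixed loci of a given weight, which is exactly where the beautifulness of $\cM$ and the irreducibility of the fibers $\cU_x$ (Remark~\ref{rem:convexcurves}) do the work.
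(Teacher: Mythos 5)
The paper does not actually prove this proposition: it is quoted verbatim from \cite[Theorem~5.3]{OS2}, so your closing remark --- that the cleanest route is simply to invoke that result --- is precisely what the authors do, and to that extent your proposal coincides with the paper.

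Your self-contained sketch, however, cannot stand as a proof; both halves have genuine gaps. In part (1) everything rests on the existence of a maximally broken invariant chain \emph{all of whose components are minimal curves}, which you assert but do not establish; and the step ``every critical value is hit by some breaking point, hence the critical values are exactly the multiples of $d$'' is circular, because a break point of an arbitrary chain sits at a multiple of $d$ only if you already know that the adjacent components of \emph{that} chain have $L$-degree $d$. (Note also that the general element of $\CX$ is an irreducible orbit closure; only special elements are chains, so ``the general orbit closure decomposes into a chain'' is not quite right.) In part (2) the key claim --- that two distinct fixed components of the same weight would be joined by a curve inside $X^{\C^*}$ --- has no support: nothing you invoke forces a connecting curve to be pointwise fixed, and ``rigidity of the Bia{\l}ynicki--Birula decomposition under Picard number one'' is not a statement available in the paper. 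A workable argument proves the two halves together by induction on the weight: starting from the (irreducible) sink $Y_0$, one shows that the invariant minimal curves with sink in $Y_j$ form an irreducible family (this is where the smoothness of $q$ and the irreducibility of the fibers $\cU_x$ from Remark~\ref{rem:convexcurves} do the work), that their sources all lie at weight $(j+1)d$ by Lemma~\ref{lem:AMvsFM}, and that the image of the source map is an entire fixed component, which is then irreducible and is the only component at that weight. As written, your sketch names the right ingredients but does not assemble them into this (or any other) complete argument.
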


\begin{remark}\label{rem:convex} 
The above Proposition allows us to use the following notation: \[B_j^\pm:=B^\pm(Y_j),\qquad \nu^\pm_j:=\nu^\pm(Y_j)\mbox{ for every $j=0, \dots, r$, and } \nu^+_{-1}=\nu^-_{r+1}:=0.\] Note that, by definition, $\nu^+_0=\nu^-_r=0$, $\nu^-_0=\dim X-\dim Y_0$, $\nu^+_r=\dim X-\dim Y_r$.
\end{remark}

\subsection{Boundary divisors}\label{ssec:boundary}

The main result of this section is that the subscheme of $\CX$ parametrizing reducible cycles is a union of smooth divisors, that we call {\em boundary divisors}, with simple normal crossings. 

Our first observation is the following:

\begin{remark}\label{rem:Hilb2}
Since $X$ is a convex variety, $\CX$  is smooth by Theorem \ref{thm:WORS6smooth}, which has been proven in \cite[Proposition~6.4]{WORS6}. The arguments there, applied directly to $\CX$, show that considering $\CX$ as a subscheme of $\Hilb(X)$ (see Remark \ref{rem:Hilb}), the convexity assumption implies that $\Hilb(X)$ is smooth at every point of $\CX$. Then $\CX$, which is the only $\C^*$-fixed point component of $\Hilb(X)$ containing the class of the closure of the general orbit in $X$, is smooth, as well.
\end{remark}

Given a closed point $[C]$ of $\CX\subset \Hilb(X)$, it determines a $\C^*$-invariant subscheme $C$ of $X$. Moreover, as observed in \cite[Proof of Proposition 6.4]{WORS6} the singularities of
$C$ are nodes. Let us now recall some facts about deformations of projective nodal curves. We refer the interested reader to \cite[Section~6]{Talpo} for a detailed account on this topic.

\begin{proposition}\label{prop:deforms}
Let $C$ be a projective connected curve, whose irreducible components are smooth rational curves $C_1,\dots, C_k$, and whose singular locus  is the set of points $\{P_i=C_{i}\cap C_{i+1}|\,\,i=1,\dots,k-1\}$; we assume that the singularities of $C$ at these points are ordinary nodes. Then:
\begin{itemize}[leftmargin=25pt]
\item $\cExt^1(\Omega_C,\cO_C)$ is supported on the singular points $P_i$, $i=1,\dots,k-1$, and $$\cExt^1(\Omega_C,\cO_C)_{P_i}\simeq \C,$$
for every $i=1,\dots,k-1$;
\item the complex vector space $\Ext^1(\Omega_C,\cO_C)$ of first order deformations of $C$  has dimension $k-1$, and it is naturally isomorphic to:
\[
\HH^0(C,\cExt^1(\Omega_C,\cO_C))\simeq \bigoplus_{i=1}^{k-1}\cExt^1(\Omega_C,\cO_C)_{P_i}.
\]
\end{itemize}
\end{proposition}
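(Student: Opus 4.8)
My approach is to run the local-to-global spectral sequence for $\cExt$,
\[
E_2^{p,q}=\HH^p\bigl(C,\cExt^q(\Omega_C,\cO_C)\bigr)\ \Longrightarrow\ \Ext^{p+q}(\Omega_C,\cO_C),
\]
and to feed into it two computations: the local structure of the sheaves $\cExt^q(\Omega_C,\cO_C)$, and the cohomology of the tangent sheaf $\cHom(\Omega_C,\cO_C)$; everything else is formal.

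For the local study, observe that since the singularities of $C$ are ordinary nodes, $C$ is a local complete intersection, so on $C\setminus\{P_1,\dots,P_{k-1}\}$ the sheaf $\Omega_C$ is invertible and $\cExt^q(\Omega_C,\cO_C)=0$ for $q\geq 1$. Near a node, with complete local ring $A=\C[[x,y]]/(xy)$, the conormal sequence presents $\Omega_A=(A\,dx\oplus A\,dy)/(x\,dy+y\,dx)$ and, in fact, gives the free resolution $0\to A\xrightarrow{(y,x)}A^2\to\Omega_A\to 0$ (the first map is injective because $\mathrm{Ann}_A(x)\cap\mathrm{Ann}_A(y)=0$). Dualizing yields $\cExt^q(\Omega_A,A)=0$ for $q\geq 2$, $\cExt^1(\Omega_A,A)\cong A/(x,y)A\cong\C$, and $\cHom(\Omega_A,A)=\mathrm{Der}_\C(A)=x\C[[x]]\,\partial_x\oplus y\C[[y]]\,\partial_y$; these are the classical local models of a node, treated in detail in \cite[Section~6]{Talpo}. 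This already proves the first bullet: $\cExt^1(\Omega_C,\cO_C)\cong\bigoplus_{i=1}^{k-1}\C_{P_i}$ is a direct sum of skyscraper sheaves, whence $\HH^0(C,\cExt^1(\Omega_C,\cO_C))=\bigoplus_{i=1}^{k-1}\cExt^1(\Omega_C,\cO_C)_{P_i}$ and $\HH^p(C,\cExt^1(\Omega_C,\cO_C))=0$ for $p\geq 1$.

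For the global input I would show that $\HH^1\bigl(C,\cHom(\Omega_C,\cO_C)\bigr)=0$. Let $\nu\colon\widetilde C=\bigsqcup_{i=1}^{k}C_i\to C$ be the normalization and $\widetilde D=\nu^{-1}\{P_1,\dots,P_{k-1}\}$ the reduced preimage of the nodes. The local description of $\cHom(\Omega_A,A)$ above identifies $\cHom(\Omega_C,\cO_C)$ with $\nu_*\bigl(T_{\widetilde C}(-\widetilde D)\bigr)$, so, since $\nu$ is affine,
\[
\HH^1\bigl(C,\cHom(\Omega_C,\cO_C)\bigr)=\bigoplus_{i=1}^{k}\HH^1\bigl(C_i,T_{C_i}(-\widetilde D|_{C_i})\bigr).
\]
The components $C_1$ and $C_k$ each contain a single node of $C$, so there $T_{C_i}(-\widetilde D|_{C_i})\cong\cO_{\P^1}(1)$; an inner component contains two nodes, so there $T_{C_i}(-\widetilde D|_{C_i})\cong\cO_{\P^1}$. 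In all cases the $\HH^1$ vanishes, proving the claim.

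Finally I would assemble these via the spectral sequence. Only the rows $q=0,1$ are nonzero, and within them only $E_2^{0,0},E_2^{1,0},E_2^{0,1}$ (the term $E_2^{1,1}=\HH^1(C,\cExt^1(\Omega_C,\cO_C))$ vanishes, and $E_2^{p,0}=0$ for $p\geq 2$ since $\dim C=1$); hence $d_2\colon E_2^{0,1}\to E_2^{2,0}=0$ is zero, the sequence degenerates at $E_2$, and the associated exact sequence in total degree $1$ reads
\[
0\to \HH^1\bigl(C,\cHom(\Omega_C,\cO_C)\bigr)\to \Ext^1(\Omega_C,\cO_C)\to \HH^0\bigl(C,\cExt^1(\Omega_C,\cO_C)\bigr)\to 0.
\]
Since the left-hand term vanishes, $\Ext^1(\Omega_C,\cO_C)$ is naturally isomorphic to $\HH^0(C,\cExt^1(\Omega_C,\cO_C))\cong\bigoplus_{i=1}^{k-1}\cExt^1(\Omega_C,\cO_C)_{P_i}$, which has dimension $k-1$. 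This is, ultimately, standard deformation theory of nodal curves, so I do not anticipate a serious obstacle; the one place where the hypotheses genuinely enter is the vanishing $\HH^1(C,\cHom(\Omega_C,\cO_C))=0$, which uses both that $C$ is a chain and that its components are rational, and which is exactly what forces the space of first-order deformations to be no larger than the $(k-1)$-dimensional space of node-smoothing parameters.
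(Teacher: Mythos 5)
Your proof is correct and follows essentially the same route as the paper's, which simply cites \cite[Remark~6.4.2]{Talpo} for the local computation at the nodes and invokes the local-to-global $\Ext$ spectral sequence for the global statement. The one thing you add explicitly that the paper leaves implicit is the vanishing $\HH^1\bigl(C,\cHom(\Omega_C,\cO_C)\bigr)=0$, which is genuinely needed to upgrade the surjection $\Ext^1(\Omega_C,\cO_C)\to\HH^0(C,\cExt^1(\Omega_C,\cO_C))$ to an isomorphism, and your verification of it (identifying the tangent sheaf with $\nu_*\bigl(T_{\widetilde C}(-\widetilde D)\bigr)$ and using that the components are rational and form a chain) is exactly right.
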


\begin{proof}
For the first assertion we refer the reader to \cite[Remark~6.4.2]{Talpo}. The second claim follows from the local-to-global
$\Ext$ spectral sequence.
\end{proof}

\begin{remark}\label{rem:deforms}
The above statement can be interpreted geometrically as follows. The first order deformations of $C$ 
correspond to smoothings of $C$ at its singular points (see \cite[Sections~6.3,~6.4]{Talpo} for details). 
More precisely, for every $i$, the $1$-dimensional vector space $\cExt^1(\Omega_C,\cO_C)_{P_i}$ corresponds, in the space $\Ext^1(\Omega_C,\cO_C)$ of first order deformations of $C$, to deformations that smoothen the point $P_i$, and are locally trivial around $P_j$, $j\neq i$. 

In this way, choosing isomorphisms $\cExt^1(\Omega_C,\cO_C)_{P_i}\simeq \C$ and denoting by $t_i$, $i=1,\dots , k-1$ the compositions:
$t_i:\Ext^1(\Omega_C,\cO_C)\to \cExt^1(\Omega_C,\cO_C)_{P_i}\simeq \C$,  the deformations of $C$ that are locally trivial around $P_i$ are given by the equation $t_i=0$.
\end{remark}

We will now go back to the case in which $C$ is the subscheme of $X$ determined by a closed point $[C]\in \CX$, and see that every first order deformation of  $C$ extends to a deformation of $C$ as a $\C^*$-invariant subscheme of $X$. More precisely, we will show the following: 

\begin{proposition}\label{prop:kodspen}
Let $[C]$ be a closed point of $\CX$. Then  the restriction to $T_{\CX,[C]}$ of the Kodaira--Spencer map:
\[
T_{\Hilb(X),[C]}\lra \Ext^1(\Omega_C,\cO_C)
\]
is surjective.
\end{proposition}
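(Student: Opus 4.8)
The plan is to reduce the surjectivity statement to a local computation at each node $P_i$ of $C$, using the description of $\Ext^1(\Omega_C,\cO_C)$ from Proposition \ref{prop:deforms} as the direct sum $\bigoplus_{i=1}^{k-1}\cExt^1(\Omega_C,\cO_C)_{P_i}$, and then to exhibit, for each node $P_i$, an explicit one-parameter $\C^*$-invariant family of subschemes of $X$ whose Kodaira--Spencer class hits the $i$-th summand. Since the image of the Kodaira--Spencer map restricted to $T_{\CX,[C]}$ is a $\C^*$-invariant subspace of $\Ext^1(\Omega_C,\cO_C)$, and since $\C^*$ acts on each line $\cExt^1(\Omega_C,\cO_C)_{P_i}$ with a (nonzero, as we shall see) weight, it suffices to produce, for each $i$, a single tangent vector in $T_{\CX,[C]}$ whose image has nonzero $t_i$-component; the $\C^*$-invariance and the fact that the $t_i$ are a basis of coordinate functions then force the image to contain all of $\bigoplus_i \cExt^1(\Omega_C,\cO_C)_{P_i}$.

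To build the required family at the node $P_i$, recall from Remark \ref{rem:chains} that $P_i$ is a fixed point of weight $a_{i}$ lying on two components $C_{i}$ (with $P_i$ its source) and $C_{i+1}$ (with $P_i$ its sink), and from Remark \ref{rem:convexcurves} that the family $\cM$ of minimal rational curves is beautiful, with $q$ smooth and surjective and fibers $\cU_x$ irreducible. The key geometric input is that, because $X$ is convex, any abstract smoothing of the node $P_i$ can be realized inside $X$ by sliding the incidence point along a rational curve through $P_i$ coming from $\cM$ (or more precisely from the chain structure of the Chow quotient): one replaces the pair $C_{i}\cup C_{i+1}$ near $P_i$ by a nearby irreducible $\C^*$-invariant curve joining the sink of $C_{i}$ to the source of $C_{i+1}$, keeping all the other components and nodes $P_j$, $j\neq i$, rigid. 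Concretely, this is exactly the elementary modification / deformation described in Remark \ref{rem:f2}: the surface $S_\Gamma = p^{-1}(\Gamma)$ over a suitable orbit closure $\Gamma$ in $\cM$ is an $\FF_e$ with $e\le 2$, and its generic fiber gives the smoothed curve while a degenerate fiber is the reducible one through $P_i$. This produces a morphism from a curve through $[C]$ in $\CX$ (indeed in the appropriate partial Chow quotient $\CX_{i-1,i+1}$, which maps to $\CX$, or using that $\CX$ dominates all of them) whose Kodaira--Spencer class at $[C]$ is supported at $P_i$ and is nonzero there, because the smoothing is nontrivial (the total space of $S_\Gamma$ is smooth along the relevant fiber, so the family is not locally trivial at $P_i$).

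I expect the main obstacle to be the verification that the constructed one-parameter families genuinely come from tangent vectors of $\CX$ (and not merely of $\Hilb(X)$), i.e., that the local smoothing at $P_i$ globalizes to a $\C^*$-invariant deformation of the \emph{whole} chain $C$ inside $X$ that keeps the homology class of the general fiber equal to that of the general orbit. This is where convexity does the real work: the obstruction space to deforming $C$ in $X$ (a quotient of $\HH^1$ of the normal sheaf, or of $\mu^*T_X$ on the components) vanishes on rational curves, so the first-order deformation lifts, and by Corollary \ref{cor:CXcontainsall} the nearby smoothed chains — still satisfying (C1)--(C4) — lie in $\CX$; hence the constructed arc lies in $\CX$ and its derivative lies in $T_{\CX,[C]}$. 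Assembling these local arcs (one per node) and invoking the $\C^*$-equivariance of the Kodaira--Spencer map to spread the image across the full direct sum then finishes the argument. The bookkeeping of weights — checking that $\C^*$ acts with nonzero weight on each $\cExt^1(\Omega_C,\cO_C)_{P_i}$, which follows from the action being equalized so that $\C^*$ acts on the two branches at $P_i$ with opposite nonzero weights — is routine and I would relegate it to a short computation using Lemma \ref{lem:AMvsFM}.
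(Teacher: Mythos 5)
Your overall strategy — produce the image of the Kodaira--Spencer map by exhibiting explicit one-node smoothings and then spread out using $\C^*$-equivariance — departs from the paper's argument, and it has two genuine gaps. First, the weight claim on which your reduction rests is false. At a node $P_i$ the two branches of $C$ have tangent directions in $N^+(Y_{i_j})$ and $N^-(Y_{i_j})$ respectively, so for an \emph{equalized} action the local branch coordinates have weights $+1$ and $-1$, the local equation $xy$ of the node has weight $0$, and hence the smoothing parameter spanning $\cExt^1(\Omega_C,\cO_C)_{P_i}$ has weight $0$: the $\C^*$-action on $\Ext^1(\Omega_C,\cO_C)$ is \emph{trivial} (this is exactly what the paper's proof uses, in the opposite direction). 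With a trivial action every linear subspace is $\C^*$-invariant, so producing for each $i$ a single tangent vector whose image has nonzero $t_i$-component does not force surjectivity — the span of $(1,1,\dots,1)$ already meets every coordinate hyperplane nontrivially. Your argument would be salvaged only if each constructed vector had image supported at $P_i$ \emph{alone}, i.e.\ $t_j=0$ for $j\neq i$; but then the equivariance step is superfluous and the entire burden falls on the geometric construction.

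That construction is the second gap. The surfaces $S_\Gamma=p^{-1}(\Gamma)\simeq\FF_e$ of Remark \ref{rem:f2} and Lemma \ref{lem:smoothij} live inside the universal family of \emph{minimal} rational curves $\cM$, whereas the components of a general chain $[C]\in\CX$ are orbit closures of arbitrary $L$-degree (a multiple of $d$ determined by the weights at their endpoints, by Lemma \ref{lem:AMvsFM}); the subchain $C_i\cup C_{i+1}$ around a given node is in general not a degeneration of a fiber of $p$, so this machinery does not apply. More to the point, the existence, for an \emph{arbitrary} chain and an \emph{arbitrary} node, of a one-parameter family in $\CX$ that smooths that node while remaining locally trivial at the others is precisely what the paper \emph{deduces} from Proposition \ref{prop:kodspen} in the proof of Theorem \ref{thm:smoothborder} (the curves $\gamma_j$ there); assuming it as input makes the argument essentially circular. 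The paper's actual proof is purely cohomological and much shorter: the nodes are local complete intersections, convexity gives $\Ext^1({\Omega_X}_{|C},\cO_C)=\HH^1(C,T_X{}_{|C})=0$, so the sequence (\ref{eq:FP}) shows the full Kodaira--Spencer map $\Hom(\cI/\cI^2,\cO_C)\to\Ext^1(\Omega_C,\cO_C)$ is surjective; since $T_{\CX,[C]}$ is the weight-zero eigenspace of $\Hom(\cI/\cI^2,\cO_C)$ and the target has trivial action, the restriction remains surjective. You never invoke the vanishing $\HH^1(C,T_X{}_{|C})=0$, which is where convexity genuinely enters; I would redirect your effort there.
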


\begin{proof}
As observed in \cite[Proof of Proposition 6.4]{WORS6} the singularities of
$C$ are local complete intersections; then  we have a short exact sequence
\begin{equation}\label{eq:ses1}
\shse{\cI/\cI^2}{{\Omega_X}_{|C}}{\Omega_C}
\end{equation}
and  $\Ext^1({\Omega_X}_{|C},\cO_C)=0$. 
Then we get the exact sequence:
\begin{multline}\label{eq:FP}
0\to\Hom(\Omega_C,\cO_C)\lra\Hom({\Omega_X}_{|C},\cO_C)\lra\\ \lra \Hom(\cI/\cI^2,\cO_C)\lra \Ext^1(\Omega_C,\cO_C)\to 0
\end{multline}
The space $\Hom(\cI/\cI^2,\cO_C)$ is the Zariski tangent space of $\Hilb(X)$ at $[C]$, and the map $\Hom(\cI/\cI^2,\cO_C)\to \Ext^1(\Omega_C,\cO_C)$ is the Kodaira--Spencer map, associating  with every first order deformation of $C$ as a subscheme of $X$ the corresponding first order deformation of $C$ as a scheme over $\Spec(\C)$ (cf. \cite[Remark 2.4.4]{Ser1}). We then see that this map is surjective. 

Now we put the action of $\C^*$ into the picture. On one hand, we have induced $\C^*$-actions on the  spaces $\Ext^1(\Omega_C,\cO_C)$ and $\HH^0(C,\cExt^1(\Omega_C,\cO_C))$, and the natural map between them is $\C^*$-equivariant. Hence, since $\cExt^1(\Omega_C,\cO_C)$ is supported on the singular points of $C$, we conclude that the action on $\Ext^1(\Omega_C,\cO_C)$ is trivial.

 On the other hand, the tangent space $T_{\CX,[C]}$ is the weight zero eigenspace  $\Hom(\cI/\cI^2,\cO_C)_0\subset \Hom(\cI/\cI^2,\cO_C)$, which must surject onto the weight zero eigenspace $\Ext^1(\Omega_C,\cO_C)_0=\Ext^1(\Omega_C,\cO_C)$. This completes the proof.
\end{proof}

\begin{corollary}\label{cor:kodspen}
In the situation of Setup \ref{set:main}, the subscheme $\CX_{\sing}$ of $\CX$ parametrizing singular $\C^*$-invariant subschemes is a divisor with normal crossings.
\end{corollary}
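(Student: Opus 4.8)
The plan is to deduce Corollary~\ref{cor:kodspen} from Proposition~\ref{prop:kodspen} by a local analysis of $\CX\subset\Hilb(X)$ near each point $[C]$, showing that the equations $t_i=0$ (from Remark~\ref{rem:deforms}) restrict to a coordinate system transverse to the locus of locally trivial deformations. First I would fix a closed point $[C]\in\CX$ with $C=C_1\cup\dots\cup C_k$ and $k-1$ nodes $P_1,\dots,P_{k-1}$; if $k=1$ then $C$ is irreducible and $[C]\notin\CX_{\sing}$, so we may assume $k\ge 2$. By Proposition~\ref{prop:deforms}, $\Ext^1(\Omega_C,\cO_C)\simeq\bigoplus_{i=1}^{k-1}\C$, with the $i$-th coordinate given by the map $t_i$ of Remark~\ref{rem:deforms}, whose vanishing cuts out the deformations that remain locally trivial (hence singular) at $P_i$. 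By Proposition~\ref{prop:kodspen}, the Kodaira--Spencer map $\kappa:T_{\CX,[C]}\to\Ext^1(\Omega_C,\cO_C)$ is surjective; since $\CX$ is smooth at $[C]$ (Remark~\ref{rem:Hilb2}), the functions $\tau_i:=t_i\circ\kappa$, $i=1,\dots,k-1$, are germs of regular functions on $\CX$ near $[C]$ whose differentials are linearly independent, so they form part of a regular system of parameters on the smooth variety $\CX$ at $[C]$.

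Next I would identify $\CX_{\sing}$ locally near $[C]$ with $\{\tau_1\cdots\tau_{k-1}=0\}$, equivalently with the union of the $k-1$ smooth hypersurfaces $D_i:=\{\tau_i=0\}$. The key point is that a point $[C']\in\CX$ near $[C]$ parametrizes a singular subscheme precisely when $C'$ is not a smoothing of $C$ at all of its nodes, i.e. when at least one node survives; and by the description in Remark~\ref{rem:deforms}, the node $P_i$ survives under the first-order deformation corresponding to $v\in\Ext^1(\Omega_C,\cO_C)$ exactly when $t_i(v)=0$. To upgrade this from first order to an actual local statement one uses that, by Remark~\ref{rem:Hilb2}, $\Hilb(X)$ is smooth at $[C]$ and the nodes of $C$ are ordinary; hence the versal deformation of $C$ as an abstract nodal curve has base $\prod_i\Spec\C[[t_i]]$ with $\{t_i=0\}$ the stratum preserving $P_i$, and $\CX$ maps to this base by a smooth morphism (surjective on tangent spaces by Proposition~\ref{prop:kodspen}, with smooth source and target). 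Pulling back the divisor $\{t_i=0\}$ along this smooth morphism gives the smooth divisor $D_i$, and $\CX_{\sing}$ is the preimage of $\bigcup_i\{t_i=0\}$, hence locally $\bigcup_i D_i$.

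Finally, since the $\tau_i$ extend to part of a regular system of parameters at $[C]$, the divisors $D_1,\dots,D_{k-1}$ meet transversally at $[C]$ and, by the same argument carried out at every point of $\CX$, the global subscheme $\CX_{\sing}$ is locally at each point a union of finitely many smooth divisors meeting transversally: that is, a divisor with normal crossings. I expect the main obstacle to be the passage from the first-order (Kodaira--Spencer) statement of Proposition~\ref{prop:kodspen} to the honest local-analytic description of $\CX_{\sing}$ as $\bigcup D_i$ --- one must check that no further (second or higher order) obstructions prevent a locally-trivial-at-$P_i$ first-order deformation from lifting to an actual deformation inside $\CX$ that keeps $P_i$ singular. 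This is handled by invoking the smoothness of both $\Hilb(X)$ at $[C]$ (Remark~\ref{rem:Hilb2}) and of $\CX$, together with the unobstructedness of deformations of $C$ as an abstract nodal curve, so that the relevant deformation functors are pro-represented by power series rings and the surjectivity on tangent spaces globalizes to a smooth morphism of germs; the normal crossing structure is then read off directly from the coordinate hyperplanes $\{t_i=0\}$ in the base of the abstract versal deformation.
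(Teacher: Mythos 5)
Your proposal is correct and follows essentially the same route as the paper: the surjectivity of the restricted Kodaira--Spencer map (Proposition \ref{prop:kodspen}) pulls back the coordinates $t_i$ on $\Ext^1(\Omega_C,\cO_C)\simeq\bigoplus_i\cExt^1(\Omega_C,\cO_C)_{P_i}$ to linearly independent linear forms on $T_{\CX,[C]}$, and $\CX_{\sing}$ is locally cut out by their product, whence normal crossings. The only difference is that you spell out in detail the upgrade from the first-order statement to the actual local equations (via the versal deformation of the nodal curve and smoothness of $\CX$ and $\Hilb(X)$ at $[C]$), a step the paper delegates to the cited results of Talpo on deformations of nodal curves.
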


\begin{proof}
The fact that $\CX_{\sing}$ is a divisor follows from \cite[Propositions 6.2.11, 6.3.5]{Talpo}. 
We will now study its local equations. 

Let $[C]$ be a closed point, corresponding to a singular $\C^*$-invariant subscheme $C\subset X$, with irreducible components $C_1,\dots, C_k$, and singular points $P_i=C_i\cap C_{i+1}$, $i=1,\dots,k-1$. Let us  consider again the Zariski tangent space $T_{\CX,[C]}$, and the linear map $$\phi:T_{\CX,[C]}\to \Ext^1(\Omega_C,\cO_C)\simeq \bigoplus_i\cExt^1(\Omega_C,\cO_C)_{P_i}$$ 
defined as the restriction to $T_{\CX,[C]}$ of the Kodaira--Spencer map, which is surjective by Proposition \ref{prop:kodspen}. We pull back via $\phi$ the linear coordinates $(t_1,\dots, t_{k-1})$ in  $\Ext^1(\Omega_C,\cO_C)$ introduced in Remark \ref{rem:deforms}; abusing notation, we denote these linear forms by $t_i:T_{\CX,[C]}\to\C$.  This tells us that locally around $[C]$ the divisor $\CX_{\sing}\subset\CX$ is given by an equation of the form $\ol{t}_1\cdot\ldots \cdot \ol{t}_{k-1}=0$, where $(d\ol{t}_i)_{[C]}=t_i$.  In particular it has normal crossings at $[C]$. 
\end{proof}

To finish our description of $\CX_{\sing}$  we introduce the following divisors: 

\begin{definition}\label{def:Ei}
For  every $i\in \{1,\dots,r-1\}$, we denote by $E^\circ_i\subset\CX$ the set parametrizing cycles having a unique singular point in the component $Y_i$. By Corollary \ref{cor:CXcontainsall} and the Bia{\l}ynicki-Birula Theorem, $E^\circ_i$ is isomorphic to an open subset of $\P(N_{Y_i}^-)\times_{Y_i}\P(N_{Y_i}^+)$, which has dimension $n-1$. We denote by $E_i$ its closure in $\CX$, and call it the $i$-th {\em boundary divisor of }$\CX$. 
\end{definition}

\begin{theorem}\label{thm:smoothborder} 
Let $X$ be a convex variety of Picard number one, endowed with an equalized $\C^*$-action of criticality $r\geq 2$. Then the Chow quotient $\CX$ of $X$ by the action of $\C^*$ is smooth, and the subscheme  
$\CX_{\sing}\subset\CX$ parametrizing reducible elements is a divisor with simple normal crossings, equal to 
$$\sum_{i=1}^{r-1}E_i.$$
Moreover $[C] \in E_i$ if and only if the cycle $C$ has a singular point in $Y_i$. 
\end{theorem}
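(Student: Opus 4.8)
The plan is to combine the three partial results already assembled in this section. Corollary \ref{cor:kodspen} tells us that $\CX_{\sing}$ is a divisor with normal crossings; what remains is to identify its irreducible components with the $E_i$, to upgrade ``normal crossings'' to ``simple normal crossings'', and to prove the membership criterion ``$[C]\in E_i$ iff $C$ has a singular point in $Y_i$''.

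First I would set up the natural stratification of $\CX_{\sing}$ by the combinatorial type of the chain $C=C_1\cup\dots\cup C_k$ parametrized by a closed point $[C]$. By Remark \ref{rem:chains}, such a chain is determined by the strictly increasing sequence $0<i_1<\dots<i_{k-1}<r$ of $L$-weights of its nodes, and by Proposition \ref{prop:onecomp} each node $P_j$ of weight $i_j d$ lies in the unique fixed component $Y_{i_j}$. So I would define, for each $i\in\{1,\dots,r-1\}$, the locally closed set $E_i^\circ$ exactly as in Definition \ref{def:Ei} (cycles with a single node, lying in $Y_i$), and let $E_i=\overline{E_i^\circ}$. The key input is Corollary \ref{cor:CXcontainsall}: every $\C^*$-invariant reduced curve satisfying (C1--C4) actually lies in $\CX$, so $E_i^\circ$ is genuinely nonempty and, via the Bia{\l}ynicki--Birula theorem, is an open subset of $\P(N^-_{Y_i})\times_{Y_i}\P(N^+_{Y_i})$, hence irreducible of dimension $n-1$; thus each $E_i$ is an irreducible divisor. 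Next I would show $\CX_{\sing}=\bigcup_{i=1}^{r-1}E_i$ and that the $E_i$ are exactly its irreducible components: any $[C]$ with $k\geq 2$ has at least one node, say of weight $i_1 d$, and deforming all the other nodes (which is possible by Proposition \ref{prop:kodspen}: the restricted Kodaira--Spencer map $T_{\CX,[C]}\to\bigoplus_j \cExt^1(\Omega_C,\cO_C)_{P_j}$ is surjective, so the locus $t_{i_1}=0$ inside a neighbourhood of $[C]$ is irreducible of codimension one and its general member is a chain with a single node in $Y_{i_1}$) exhibits $[C]$ as a limit of points of $E_{i_1}^\circ$. Hence $[C]\in E_{i_1}$, and more precisely $[C]\in E_i$ for every weight $i$ occurring among its nodes. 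This simultaneously gives the containment $\CX_{\sing}\subseteq\bigcup E_i$ and one direction of the membership criterion.

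For the reverse direction of the membership criterion --- if $[C]\in E_i$ then $C$ has a node in $Y_i$ --- I would argue by upper-semicontinuity of the fixed-point weight data along the flat family $p:\cU\to\CX$. Points of $E_i^\circ$ parametrize curves with a node of weight $id$, i.e., with a $\C^*$-fixed point lying in $Y_i$ that is simultaneously a source of one component and a sink of the next; under specialization within $\CX$ a node of weight $id$ either persists as a node of weight $id$ or ``breaks'' into a sub-chain whose intermediate nodes have strictly larger and smaller weights but whose total still passes through a fixed point of weight $id$ --- and by Proposition \ref{prop:onecomp} the only fixed component of weight $id$ is $Y_i$. Concretely, the section of $p$ over $E_i^\circ$ picking out the node extends (after passing to the normalization / using properness of $p$) to a section over $E_i$ whose image, being $\C^*$-fixed and of $L$-weight $id$, lands in $Y_i$; its image point is a singular point of the corresponding cycle because the node-smoothing coordinate $t_i$ vanishes identically on $E_i$. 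Finally, for simple normal crossings I would check that the components $E_i$ meeting at a point $[C]$ are precisely those indexed by the weights $i_1,\dots,i_{k-1}$ of the nodes of $C$, that there are exactly $k-1$ of them, and that --- by the computation in the proof of Corollary \ref{cor:kodspen}, where $\CX_{\sing}$ is locally $\overline t_1\cdots\overline t_{k-1}=0$ with $(d\overline t_j)_{[C]}$ linearly independent --- the local branch $\overline t_j=0$ is exactly $E_{i_j}$ near $[C]$; since each $E_{i_j}$ is irreducible (proved above) and smooth (it is cut out locally by the single equation $\overline t_j$ with nonvanishing differential), the union is a simple normal crossings divisor.

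I expect the main obstacle to be the careful identification, near a point $[C]$ with $k-1\ge 2$ nodes, of the local branch ``$\overline t_j=0$'' of $\CX_{\sing}$ with the global irreducible divisor $E_{i_j}$ --- equivalently, showing that distinct nodes of the \emph{same} weight cannot occur (which is immediate here: within a single chain the node weights $i_1<\dots<i_{k-1}$ are strictly increasing, so each weight value appears at most once, and this is exactly why the map from local branches to the $E_i$ is well-defined and injective) and that a node of weight $id$ genuinely forces the point to lie on $E_i$ and on no other $E_{i'}$. This is where Proposition \ref{prop:onecomp} (uniqueness of $Y_i$ per weight) and the explicit coordinates $t_i$ from Remark \ref{rem:deforms} do the essential work; once they are in place the rest is bookkeeping with the already-established smoothness of $\CX$ (Remark \ref{rem:Hilb2}) and the normal-crossings statement of Corollary \ref{cor:kodspen}.
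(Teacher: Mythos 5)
Your proposal is correct and follows essentially the same route as the paper: both use the surjectivity of the restricted Kodaira--Spencer map (Proposition \ref{prop:kodspen}, via Corollary \ref{cor:kodspen}) to deform away all nodes but one, thereby showing $[C]\in E_{i_j}$ for each node weight $i_j$, and then identify the $k-1$ local branches of $\CX_{\sing}$ at $[C]$ with these $E_{i_j}$. The only (harmless) divergence is that you establish the ``only if'' half of the membership criterion by a closedness/semicontinuity argument on the family $p:\cU\to\CX$, whereas the paper reads it off from the fact that the $k-1$ branches at $[C]$ are exhausted by the $E_{i_j}$'s.
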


\begin{proof}
Take an element $[C]\in \CX$ consisting of $k$ irreducible components $C_1,\dots,C_k$, and denote by $P_1\in Y_{i_1},\dots, P_{k-1}\in Y_{i_{k-1}}$ its singular points. 

With the notation of the proof of Corollary \ref{cor:kodspen}, for every $j\in\{1,\dots,k-1\}$ there exists a smooth  curve $\gamma_j$ in $\CX$ passing by $[C]$, whose tangent space at $[C]$ is generated by $v\in T_{\CX,[C]}$, such that $\phi(v)\in \Ext^1(\Omega_C,\cO_C)$ satisfies $t_j(\phi(v))=1$, $t_s(\phi(v))=0$, for $s\neq j$. 

The general element of $\gamma_j$ has a unique singular point, that belongs to $Y_{i_j}$; in fact, the singular points of an element of $\CX$ are fixed points, and the unique singular point of an element of $\gamma_j$ different from $C$ is a deformation of $P_j\in Y_{i_j}$. In particular, the general element of $\gamma_j$ belongs to  $E_{i_j}$, and so $[C]\in E_{i_j}$, as well. We conclude that $[C]\in \bigcap_{j=1}^{k-1}E_{i_j}$.

On the other hand, the arguments in the proof of Corollary \ref{cor:kodspen} tell us that, locally around $[C]$, $\CX_{\sing}$ is the normal crossing intersection of exactly $(k-1)$ divisors, hence these divisors are necessarily the $E_{i_j}$'s.  This finishes the proof.
\end{proof}

\begin{remark}\label{rem:smoothborder}
The above theorem is not true without the convexity assumption: in general the subscheme of $\CX$ parametrizing reducible cycles may contain components different from the $E_i$'s defined above (see \cite[Example~6.1]{WORS6}). 
\end{remark}

We now relate the boundary divisors with the exceptional divisors of the blowups appearing in diagram (\ref{fig:Chow0}). Note that the centers of the blowups $\prur:\CX_{i-1,i+1}\to \GX_{i,i+1}$, $\prul:\CX_{i-1,i+1}\to \GX_{i-1,i}$ are smooth by  \cite[Lemma~5.7]{WORS6}. 

\begin{lemma}\label{lem:Eiblowup}
For every $i = 1, \dots, r-1$, let $E'_i$ be the exceptional divisor of the blowups
$\prur:\CX_{i-1,i+1}\to \GX_{i,i+1}$, $\prul:\CX_{i-1,i+1}\to \GX_{i-1,i}$. Then the pullback of $E'_i$ to $\CX$ is the boundary divisor $E_i$.
\end{lemma}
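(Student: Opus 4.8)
The plan is to trace an element of $E_i$ through the partial Chow quotients and identify its image on each side. First I would recall from Diagram (\ref{fig:Chow0}) that $\CX = \CX_{0,r}$ dominates $\CX_{i-1,i+1}$, so there is a birational morphism $\rho\colon \CX \to \CX_{i-1,i+1}$ obtained by composing the appropriate $\prul$'s and $\prur$'s, and similarly $\CX_{i-1,i+1}$ maps to $\GX_{i,i+1}$ and to $\GX_{i-1,i}$ by blowups with smooth centers. By Theorem \ref{thm:smoothborder} the divisor $E_i\subset\CX$ parametrizes exactly those cycles with a singular point in $Y_i$, and by Corollary \ref{cor:CXcontainsall} together with the Białynicki-Birula theorem the generic such cycle is a chain of exactly two components $C_1\cup C_2$ with the node $P\in Y_i$, $C_1$ running from $Y_0$ to $Y_i$ and $C_2$ from $Y_i$ to $Y_r$.

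Next I would analyze what $\rho$ does to such a cycle. The pruning $X_{i-1,i+1}$ is, by construction, the quotient that "forgets" all the critical information strictly between $a_{i-1}$ and $a_{i+1}$ except at $a_i$; concretely, the Chow quotient $\CX_{i-1,i+1}$ of $X_{i-1,i+1}$ parametrizes cycles that are at most chains of length two broken at $Y_i$ — i.e., $\CX_{i-1,i+1}$ is, away from $E'_i$, the geometric quotient on one side, and $E'_i$ is precisely the locus of the (two-component) reducible cycles with node in $Y_i$. Thus $\rho$ restricts to a dominant map $E_i\to E'_i$, and since both $E_i$ and $E'_i$ are the unique divisors dominating the locus of cycles with a node over $Y_i$, $\rho^{-1}(E'_i)$ (as a set) equals $E_i$. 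To get the scheme-theoretic equality $\rho^*E'_i = E_i$ I would compare the local equations: by Lemma \ref{lem:Eiblowup}'s companion result and the proof of Corollary \ref{cor:kodspen}, the local equation of $E_i$ near a point of $\CX$ is a single coordinate $\bar t_{i}$ coming from the smoothing parameter $t_i\colon \Ext^1(\Omega_C,\cO_C)\to\CC$ of the node over $Y_i$; the pullback under $\rho$ of the local equation of $E'_i$ (which, $E'_i$ being the exceptional divisor of a blowup with smooth center in a smooth variety, is again a single coordinate near a general point) is the same smoothing parameter, because $\rho$ is an isomorphism near the general point of $E_i$ (the general cycle with a node in $Y_i$ has no other nodes, and $\rho$ only modifies the behaviour at the other critical values). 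Hence the two Cartier divisors agree on a dense open set of the smooth variety $\CX$ and, being effective, agree everywhere.

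The remaining subtlety — which I expect to be the main obstacle — is ruling out extra components or multiplicity: a priori $\rho^*E'_i$ could pick up a further contribution along the exceptional loci of the intermediate blowups contracted by $\rho$ (these sit over the other critical values $a_j$, $j\ne i$). To handle this I would argue that $\rho$ contracts only divisors $E_j$ with $j\ne i$ (by Theorem \ref{thm:smoothborder} the exceptional divisors of $\rho$ are among the boundary divisors, since those are the only divisors of $\CX$ not dominating $\GX_{i,i+1}$), and none of the $E_j$ ($j\ne i$) dominates $E'_i$: an element of $E_j$ generic in $E_j$ is a two-component chain broken at $Y_j$, whose image in $\CX_{i-1,i+1}$ is an \emph{irreducible} cycle (the break at $Y_j$ disappears since $j\notin\{i-1,i,i+1\}$, or lies in the discarded range), hence lands in the complement of $E'_i$. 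Therefore $\rho^*E'_i$ is supported exactly on $E_i$ with multiplicity one, i.e. $\rho^*E'_i = E_i$, which is the claim. I would finally note that the statement holds verbatim whether one uses $\prur$-side or $\prul$-side, since $E'_i$ is the common exceptional divisor of both blowups $\CX_{i-1,i+1}\to\GX_{i,i+1}$ and $\CX_{i-1,i+1}\to\GX_{i-1,i}$.
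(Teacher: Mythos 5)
Your proposal is correct and follows essentially the same route as the paper: identify the support of the pullback with $E_i$ by observing that the image of a cycle $[C]$ in $\CX_{i-1,i+1}$ lies in $E'_i$ exactly when $C$ has a node on $Y_i$ (which by Theorem \ref{thm:smoothborder} is exactly the condition $[C]\in E_i$), and then get multiplicity one because $\CX\to\CX_{i-1,i+1}$ is a local isomorphism at a general point of $E_i$. The paper handles your ``no extra contracted components'' worry more directly, since the pointwise criterion $\rho([C])\in E'_i \Leftrightarrow [C]\in E_i$ already rules out any other divisor mapping into $E'_i$; your detour through which divisors of $\CX$ are $\rho$-exceptional is harmless but unnecessary.
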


\begin{proof}
Let  $[C]\in \CX$ be a cycle; its image into $\CX_{i-1,i+1}$ can be described as follows: we take the intersection of $C$ with the open set $X\setminus (B^+_{i-1}\cup B^-_{i+1})$, which maps isomorphically into the pruning $X_{i-1,i+1}$, and consider its closure in $X_{i-1,i+1}$; this  determines an element $[C'] \in \CX_{i-1,i+1}$.  The divisor $E'_i$ is isomorphic to $\P(N_{Y_i}^-)\times_{Y_i}\P(N_{Y_i}^+)$, hence $[C'] \in E_i'$ if and only if $C$ is singular at a point of $Y_i$,  By Theorem \ref{thm:smoothborder}, this happens if and only if $[C]\in E_i$. 

This shows that the inverse image of $E'_i$ (which is the support of the pullback of $E'_i$ to $\CX$) is $E_i$. It follows also that the birational map $\CX\to \CX_{i-1,i+1}$ is a local isomorphism at a general point of $E_i$, hence  $E_i$ is the pullback of $E'_i$.
\end{proof}

As a consequence of the Lemma, considering the commutative diagram with Cartesian squares:
\[
\begin{tikzcd}[
  column sep={4em,between origins},
  row sep={3em,between origins}]
&&\CX_{0,r}\arrow[rd,"\prur"] \arrow[dl,"\prul",labels=above left] &&\\
&\CX_{0,i+1}\arrow[rd,"\prur"] \arrow[dl,"\prul",labels=above left]&&\CX_{i-1,r}\arrow[rd,"\prur"] \arrow[dl,"\prul",labels=above left]&\\
\CX_{0,i}\arrow[rd,"\prur"]&&\CX_{i-1,i+1}\arrow[rd,"\prur"] \arrow[dl,"\prul",labels=above left]&&\CX_{i,r} \arrow[dl,"\prul",labels=above left]\\
&\GX_{i-1,i}&&\GX_{i,i+1}&
\end{tikzcd}
\]
we see that $E_i$ equals both the pullbacks to $\CX=\CX_{0,r}$ of the exceptional divisors of the smooth blowups $\prul:\CX_{0,i+1}\to \CX_{0,i}$, $\prur:\CX_{i-1,r}\to \CX_{i,r}$. 

\subsection{Smoothness of the partial Chow quotients}\label{ssec:smoothpart}

We will show here that, in the situation of Setup \ref{set:main}, all the maps $\prul,\prur$ appearing in Diagram (\ref{fig:Chow0}) are blowups with smooth centers:
\begin{theorem}\label{thm:smoothpart}
Let $X$ be as in Setup \ref{set:main}. Then, for $i<j$, the Chow quotients $\CX_{i,j}$  are smooth, and  the maps $\prul:\CX_{i,j+1}\to \CX_{i,j}$, $\prur:\CX_{i-1,j}\to \CX_{i,j}$  are blowups with smooth irreducible centers. 
\end{theorem}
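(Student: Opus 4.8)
The plan is to argue by induction on the \emph{width} $w:=j-i$, exploiting the recursive structure of Diagram~(\ref{fig:Chow0}): by Theorem~\ref{thm:rhombuses} every rhombus there is a normalized Cartesian square whose $\prul$- and $\prur$-edges are blowups, and the centers of these blowups have been described in \cite[Remark~5.10]{WORS6}. Concretely, one proves by induction on $w\ge 1$ that every variety of width $w$ occurring in Diagram~(\ref{fig:Chow0}) is smooth, and that every $\prul$- or $\prur$-arrow from width $w$ to width $w-1$ is a blowup along a smooth irreducible center. The base cases do not use convexity: for $w=1$ the relevant varieties are the geometric quotients $\GX_{i,i+1}$, which are smooth (see \cite{WORS3}); for $w=2$, by \cite[Lemma~5.7]{WORS6} the maps $\prur:\CX_{i,i+2}\to\GX_{i+1,i+2}$ and $\prul:\CX_{i,i+2}\to\GX_{i,i+1}$ are blowups along smooth irreducible centers, so $\CX_{i,i+2}$ is smooth, being a blowup of a smooth variety along a smooth subvariety.

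For the inductive step, assume the statement for all widths $\le w$, with $w\ge 2$, and let $j-i=w+1\ (\ge 3)$, so that $i+1<j-1$. Consider the normalized Cartesian square
\[
\begin{tikzcd}[column sep=3.2em,row sep=2.6em]
\CX_{i,j}\arrow[r,"\prur"]\arrow[d,"\prul"']&\CX_{i+1,j}\arrow[d,"\prul"]\\
\CX_{i,j-1}\arrow[r,"\prur"']&\CX_{i+1,j-1}
\end{tikzcd}
\]
By the inductive hypothesis $\CX_{i+1,j}$, $\CX_{i,j-1}$ (width $w$) and $\CX_{i+1,j-1}$ (width $w-1$) are smooth, and $\prul:\CX_{i+1,j}\to\CX_{i+1,j-1}$, $\prur:\CX_{i,j-1}\to\CX_{i+1,j-1}$ are the blowups of $\CX_{i+1,j-1}$ along smooth irreducible subvarieties $Z_\prul$ and $Z_\prur$. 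The whole argument then reduces to the following transversality statement. \emph{Claim:} $Z_\prul$ and $Z_\prur$ meet transversally in $\CX_{i+1,j-1}$ (with possibly empty intersection). Granting it, near any common point the two centers are cut out by complementary parts of a regular system of parameters, so the scheme-theoretic fiber product $\CX_{i+1,j}\times_{\CX_{i+1,j-1}}\CX_{i,j-1}$ is irreducible and smooth: it is the blowup of the smooth variety $\CX_{i+1,j}$ along the strict transform of $Z_\prur$ (a smooth irreducible subvariety, by transversality), and symmetrically the blowup of $\CX_{i,j-1}$ along the strict transform of $Z_\prul$. Being smooth it is normal, hence it coincides with its distinguished component and with its normalization $\CX_{i,j}$. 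Therefore $\CX_{i,j}$ is smooth and $\prur:\CX_{i,j}\to\CX_{i+1,j}$, $\prul:\CX_{i,j}\to\CX_{i,j-1}$ are blowups along smooth irreducible centers, which closes the induction.

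It remains to prove the Claim, and this is the main obstacle, as well as the only step that uses the convexity of $X$. From the description of the centers in \cite[Remark~5.10]{WORS6} and the identification of exceptional divisors carried out around Lemma~\ref{lem:Eiblowup}, $Z_\prul$ --- coming from the map that lowers the second index to $j-1$ --- is governed by the fixed point component $Y_{j-1}$, whereas $Z_\prur$ --- coming from the map that raises the first index to $i+1$ --- is governed by $Y_{i+1}$; since $i+1<j-1$ these are distinct inner fixed point components, so $Z_\prul$ constrains a cycle only near its source end while $Z_\prur$ constrains it only near its sink end. At a point $[C]$ of $Z_\prul\cap Z_\prur$ one computes, exactly as in the proofs of Corollary~\ref{cor:kodspen} and Theorem~\ref{thm:smoothborder} via Propositions~\ref{prop:deforms} and~\ref{prop:kodspen}, that the conormal directions of $Z_\prul$ and $Z_\prur$ form part of a single regular system of parameters on $\CX_{i+1,j-1}$ at $[C]$: they are the smoothing parameters attached to nodes lying on $Y_{j-1}$ and on $Y_{i+1}$ respectively, and these are independent because $\cExt^1(\Omega_C,\cO_C)$ splits as a direct sum over the nodes of $C$ (Proposition~\ref{prop:deforms}, Remark~\ref{rem:deforms}) and the relevant Kodaira--Spencer map is surjective by Proposition~\ref{prop:kodspen}, which rests on convexity. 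Making this local analysis precise in the relative ``partial'' setting --- rather than on $\CX=\CX_{0,r}$ itself --- is the delicate point; the convexity hypothesis is genuinely necessary, since by Remark~\ref{rem:smoothborder} the conclusion already fails for $\CX$ without it.
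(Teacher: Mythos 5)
Your induction runs in the opposite direction from the paper's, and the step where the two arguments would have to meet is exactly where your proposal has a genuine gap. The paper argues \emph{top-down}: it starts from the smoothness of the upper sides of Diagram~(\ref{fig:Chow0}) (Theorem~\ref{thm:WORS6smooth}, which is where convexity enters, via the Hilbert-scheme argument of Remark~\ref{rem:Hilb2}), and descends through the rhombi using Lemma~\ref{lem:smoothblow}: knowing that the two arrows \emph{out of} the top vertex $\CX_{i-1,j+1}$ are smooth blowups with irreducible exceptional divisors (the images of $E_i,E_j$), and that these exceptional divisors have proper mutual intersection (Theorem~\ref{thm:smoothborder}), one concludes via the blowup-recognition criterion of \cite[Theorem~5.1]{AO2} that the bottom vertex $\CX_{i,j}$ is smooth and the two arrows \emph{into} it are smooth blowups. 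You instead go \emph{bottom-up} on the width $j-i$, and your inductive step requires the two blowup centers $Z_\prul,Z_\prur\subset\CX_{i+1,j-1}$ to meet transversally. That is strictly more information than your inductive hypothesis provides: knowing each center is smooth and irreducible says nothing about how they intersect, and without transversality the fiber product need not be smooth (nor irreducible), so the whole step collapses onto the unproved Claim.

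The Claim itself is not established by the argument you sketch. Propositions~\ref{prop:deforms} and~\ref{prop:kodspen} are deformation-theoretic statements about nodal cycles in the smooth convex variety $X$, i.e.\ about $\CX=\CX_{0,r}$ inside $\Hilb(X)$; they do not transfer to the partial Chow quotients $\CX_{i+1,j-1}$ of the prunings $X_{i+1,j-1}$, which are in general singular (they are $\Proj$'s of section rings on a blowup) and for which no convexity or Kodaira--Spencer surjectivity is available. Moreover, even on $\CX$ the node-smoothing parameters $t_k$ cut out the boundary \emph{divisors} $E_k$, whereas the centers $Z_\prul,Z_\prur$ have codimensions $\nu_{j-1}^{-}$ and $\nu_{i+1}^{+}$ (see the proof of Theorem~\ref{thm:can}), so ``the conormal directions of $Z_\prul$ and $Z_\prur$ are independent smoothing parameters'' conflates a codimension-one statement with the full conormal spaces of higher-codimensional centers. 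To repair your route you would essentially have to prove the transversality by lifting to $\CX$ and using the simple-normal-crossing statement of Theorem~\ref{thm:smoothborder} together with the already-known smoothness of the top of the diagram --- which is precisely the paper's descent, packaged differently. As written, the proposal identifies the right obstacle but does not overcome it.
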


The proof of Theorem \ref{thm:smoothpart} will be obtained by using recursively the following: 

\begin{lemma}\label{lem:smoothblow}
Consider a Cartesian diagram of normal projective varieties:
\[
\xymatrix@=12pt{&W\ar[ld]_{\prul}\ar[rd]^{\prur}\ar[dd]^\varphi&\\Y_1\ar[rd]_{\prur'}&&Y_2\ar[ld]^{\prul'}\\&Z&}
\]
such that:
\begin{itemize}
\item $W,Y_1,Y_2$ are smooth, and the morphisms $\prul$, $\prur$ are blowups with smooth irreducible centers and exceptional loci $E_\prul, E_\prur$;
\item $\varphi(E_\prul) \cap \varphi(E_\prur)$ is a proper subset of $\varphi(E_\prul)$ and of $\varphi(E_\prur)$.
\end{itemize}
Then $Z$ is smooth and the maps $\prur'$, $\prul'$ are blowups with smooth  centers.
\end{lemma}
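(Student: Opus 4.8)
The plan is to read off the structure of $\prur'$ and $\prul'$ from the Cartesian property of the square. First, note that $E_\prul\neq E_\prur$: otherwise $\varphi(E_\prul)=\varphi(E_\prur)$, contradicting the hypothesis. Denote by $A_1\subset Y_1$ and $A_2\subset Y_2$ the smooth irreducible centres of $\prul$ and $\prur$; as their exceptional loci are divisors, $A_1$ and $A_2$ have codimension $\geq 2$, so the fibres of $\prul$ (resp.\ $\prur$) are points or projective spaces $\PP^{a}$ (resp.\ $\PP^{b}$) with $a,b\geq 1$. Write $\varphi=\prur'\circ\prul=\prul'\circ\prur$. Since the square is Cartesian we have $W=Y_1\times_Z Y_2$ (or its normalization, which will not affect the local analysis), so for each $z\in Z$ the scheme-theoretic fibre $\varphi^{-1}(z)$ is the product of the fibres $(\prur')^{-1}(z)$ and $(\prul')^{-1}(z)$, on which $\prul$ and $\prur$ restrict to the two projections. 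Comparing this with the shape of the fibres of $\prul,\prur$ forces, for each $z$, the dichotomy: either $(\prur')^{-1}(z)$ is disjoint from $A_1$, and then $(\prul')^{-1}(z)$ is a single point; or $(\prur')^{-1}(z)\subset A_1$, and then $(\prul')^{-1}(z)\cong\PP^{a}$; and symmetrically with the roles of $1$ and $2$ exchanged. Running through the four possibilities, one finds that $z$ belongs to $Z\setminus(\varphi(E_\prul)\cup\varphi(E_\prur))$, to $\varphi(E_\prur)\setminus\varphi(E_\prul)$, to $\varphi(E_\prul)\setminus\varphi(E_\prur)$, or to $\varphi(E_\prul)\cap\varphi(E_\prur)$ according as the pair $\big((\prur')^{-1}(z),(\prul')^{-1}(z)\big)$ is $(\pt,\pt)$, $(\PP^{b},\pt)$, $(\pt,\PP^{a})$, or $(\PP^{b},\PP^{a})$.

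In particular $\prur'\colon Y_1\to Z$ is an isomorphism over $Z\setminus\varphi(E_\prur)$, all of its fibres over $\varphi(E_\prur)=\prur(A_2)$ are reduced and isomorphic to $\PP^{b}$, and $\Exc(\prur')=(\prur')^{-1}(\varphi(E_\prur))$ is an irreducible divisor (it dominates the irreducible variety $\varphi(E_\prur)$ with irreducible fibres); symmetrically for $\prul'$. Next I would use that over $Z\setminus\varphi(E_\prul)$ the map $\prul'$ is an isomorphism onto an open subset of the \emph{smooth} variety $Y_2$, so that $Z\setminus\varphi(E_\prul)$ is smooth; moreover, under the identification $W\times_Z(Z\setminus\varphi(E_\prul))=Y_1\times_Z(Z\setminus\varphi(E_\prul))$, the restriction of $\prur'$ there agrees with the restriction of $\prur$, so it is the blow-up of the image of $A_2$, which is smooth and dense in $\varphi(E_\prur)$. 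It follows that $\cO_{Y_1}(-\Exc(\prur'))$ restricts to $\cO(1)$ on each fibre $\cong\PP^{b}$ of $\prur'$ over the dense open locus $\varphi(E_\prur)\setminus\varphi(E_\prul)$, hence — since the lines in these fibres form an irreducible family of rational curves contracted by $\prur'$, along which the intersection number is constant — on every fibre of $\prur'$ over $\varphi(E_\prur)$. Symmetrically $Z\setminus\varphi(E_\prur)$ is smooth, so $Z$ is smooth outside $\varphi(E_\prul)\cap\varphi(E_\prur)$, which is a closed subset of codimension $\geq 3$ in $Z$ (each of $\varphi(E_\prul)$, $\varphi(E_\prur)$ has codimension $\geq 2$, and the intersection is a proper subset of each).

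To conclude I would invoke a blow-down criterion of Fujiki--Nakano type for $\prur'\colon Y_1\to Z$: it is a projective birational morphism from a smooth variety onto a normal one, an isomorphism off $\varphi(E_\prur)$, whose irreducible exceptional divisor is mapped onto $\varphi(E_\prur)$ with every fibre isomorphic to $\PP^{b}$ and $\cO_{Y_1}(-\Exc(\prur'))$ of degree $1$ on it; this gives that $\varphi(E_\prur)$ and $Z$ are smooth and that $\prur'$ is the blow-up of $Z$ along $\varphi(E_\prur)$. Applying the same argument to $\prul'$ shows that it is the blow-up of $Z$ along $\varphi(E_\prul)$, which finishes the proof.

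The main obstacle will be the passage across $\varphi(E_\prul)\cap\varphi(E_\prur)$: one has to upgrade the projective-bundle description of $\Exc(\prur')$ over $\varphi(E_\prur)$ — transparent over the dense open part where $\prul'$ is an isomorphism — to one valid over all of $\varphi(E_\prur)$ (equivalently, to prove that $\Exc(\prur')$ and $\varphi(E_\prur)$ are smooth), so that the blow-down criterion genuinely applies. This is precisely where the hypothesis that $\varphi(E_\prul)\cap\varphi(E_\prur)$ be a proper subset of each of $\varphi(E_\prul)$ and $\varphi(E_\prur)$ enters in an essential way: it makes the locus where $\prur'$ and $\prul'$ simultaneously fail to be isomorphisms thin inside each of these subvarieties, so that the structure established over the complementary dense open sets propagates.
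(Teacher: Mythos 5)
Your fibrewise analysis of the Cartesian square is correct as far as it goes: it cleanly establishes that $\prur'$ is an isomorphism over $Z\setminus\varphi(E_\prur)$, that all of its fibres over $\varphi(E_\prur)$ are reduced copies of $\PP^{b}$, that $\Exc(\prur')$ is an irreducible divisor, and that $Z$ is smooth away from the codimension $\geq 3$ locus $\varphi(E_\prul)\cap\varphi(E_\prur)$. The gap is exactly where you flag it, and it is not cosmetic. The Fujiki--Nakano blow-down criterion requires $\Exc(\prur')\to\varphi(E_\prur)$ to be a $\PP^{b}$-bundle over a \emph{smooth} base with $\cO(-1)$ on the fibres; what you have is that every fibre is abstractly a $\PP^{b}$ and that the bundle structure holds over the dense open subset $\varphi(E_\prur)\setminus\varphi(E_\prul)$. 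Equidimensionality of fibres plus thinness of the bad locus does not imply local triviality there, nor smoothness of $\varphi(E_\prur)$ (which is the image of $A_2$ under $\prul'$, a priori not even normal along $\varphi(E_\prul)\cap\varphi(E_\prur)$). The assertion that the structure ``propagates'' is precisely the content of the theorem you would need, not a consequence of the setup; so as written the argument stops one step short of the conclusion. (A repair inside your framework would be to prove normality of $\varphi(E_\prur)$ and then recover the bundle structure via cohomology and base change from the line bundle of degree one on the fibres, but none of that is in your text.)

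For comparison, the paper closes this gap by a different device. It first notes that $\rho(W/Z)=2$, so $\prur'$ is an \emph{elementary} divisorial contraction with exceptional locus $\prul(E_\prur)$; it then uses the second hypothesis only to find a single point $P\in \prur'(\Exc(\prur'))\setminus\prur'(\prul(E_\prul))$ over which $\prul$ is a local isomorphism on $F=\varphi^{-1}(P)$, so that $\prul(F)$ is a projective space on whose lines $-K_{Y_1}$ has degree $\dim\prul(F)$. The conclusion then follows from \cite[Theorem~5.1]{AO2}, which says that an elementary divisorial contraction of a smooth variety admitting one such fibre is a blowup along a smooth centre. That theorem is doing exactly the work you are asking the word ``propagates'' to do; you should either invoke it (or an equivalent statement) or supply the normality/bundle argument sketched above.
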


\begin{proof}
It is clearly enough to show that $\prur'$ is a blowup with smooth center. By our second assumption, $\prul(E_\prur)$ is not contained in $\prul(E_\prul)$, hence we may assert that $\prur'$ is a divisorial contraction with exceptional locus $\prul(E_\prur)$. It is elementary because the relative Picard number $\rho(W/Z)$ is two, since $\varphi$ contracts a two-dimensional face of the Mori cone of $W$.

Moreover, there exists a point $P\in \prur'(\Exc(\prur'))\setminus \prur'(\prul(E_\prul))$ such that $\prul$ is a local isomorphism at the points of  the fiber $F=\varphi^{-1}(P)$. Since the diagram is Cartesian and $\prur$ is a smooth blowup, then $F$ is a projective space. Therefore:
\[(-K_{Y_1})_{|\prul(F)}=(-\prul^*K_{Y_1})_{|F}=(-K_{W})_{|F}=\dim F=\dim \prul(F).\]
Then, by \cite[Theorem~5.1]{AO2}, it follows that $\prur'$ is a blowup with a smooth irreducible center.
\end{proof}

\begin{proof}[Proof of Theorem \ref{thm:smoothpart}]
From Theorem \ref{thm:WORS6smooth}, we already know that the assertion is true for $i=0$ and $j=r$, that is, for the varieties and maps appearing on the upper sides of Diagram (\ref{fig:Chow0}). Now we apply Lemma \ref{lem:smoothblow} recursively on $r-i+j$. 

At a given step, if we have that $\prul:\CX_{i-1,j+1}\to \CX_{i-1,j}$ and $\prur:\CX_{i-1,j+1}\to \CX_{i,j+1}$ are blowups with smooth centers, we consider the diagram:
\[
\xymatrix@R=7pt@C=5pt{&\CX_{i-1,j+1}\ar[ld]_{\prul}\ar[rd]^{\prur}&\\\CX_{i-1,j}\ar[rd]&&\CX_{i,j+1}\ar[ld]\\&\CX_{i,j}&}
\]
The exceptional divisors of $\prul$, $\prur$ are the images in $\CX_{i-1,j+1}$ of $E_j,E_i\subset \CX$, respectively. In particular, they are irreducible. By Theorem \ref{thm:smoothborder}, the intersection of these two sets is properly contained in both. This concludes the proof.
\end{proof}

\subsection{Primal curves}\label{ssec:primal}

In this section we will construct some special curves in $\CX$; we will show later that their classes generate the Mori cone of $\CX$.
The construction is based on the following: 

\begin{lemma}\label{lem:smoothij}
For every $i=0, \dots,r-1$ and
for every point $P_{i-1} \in Y_{i-1}$ there exist a point $P_{i+1} \in Y_{i+1}$ and a curve $\Gamma \subset \cM$, which is the closure on an orbit,
such that $S=p^{-1}(\Gamma)$ is a ruled surface $\FF_2$, the sink and the source of the action on $S$ are mapped by the evaluation map $q:\cU\to X$ to $P_{i-1}$ and to $P_{i+1}$, respectively, and the minimal section is contracted by $q$ to a point $P_i \in Y_i$. 
\end{lemma}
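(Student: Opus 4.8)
I want to produce a curve $\Gamma \subset \cM$ whose associated ruled surface $S = p^{-1}(\Gamma)$ is an $\FF_2$ with the prescribed fixed-point behavior. The natural place to look is inside the family $\cU_{P_{i-1}}$ of minimal rational curves through the point $P_{i-1}$, which is irreducible (Remark \ref{rem:convexcurves}) and carries a $\C^*$-action since $P_{i-1}$ is a fixed point. More precisely, $P_{i-1} \in Y_{i-1}$ lies on members of $\cM$ that are $\C^*$-invariant and flow from $Y_{i-1}$ upward; among these I want one flowing to $Y_{i+1}$ (i.e. of $L$-degree $2d$ between consecutive-but-one critical levels), and I want to vary it in a one-parameter orbit-closure. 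The first step is therefore to exhibit a $\C^*$-invariant minimal curve $C_0$ with sink $P_{i-1}$ and source in $Y_{i+1}$: this follows because the BB-cell $X^-(Y_{i-1})$ meets $B^+_{i+1}$ along curves realizing the $L$-degree jump by $2d$ (using Lemma \ref{lem:AMvsFM} and Corollary \ref{cor:CXcontainsall}), so such a $[C_0]$ exists in $\cM$ — or rather in a deformation-family of minimal curves — and $[C_0]$ is a fixed point of the $\C^*$-action on $\cM$.

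**Building $\Gamma$.** Next I would take $\Gamma$ to be the closure of a suitable one-dimensional orbit in $\cM$ through a point near $[C_0]$. By the results recalled from \cite{OS2} (Remark \ref{rem:f2}), for such a $\Gamma$ the surface $S=p^{-1}(\Gamma)$ is a Hirzebruch surface $\FF_e$ with $e \le 2$, and if no point of $\Gamma$ parametrizes a curve fixed pointwise by the action on $X$, then $e = 0$ or $e = 2$ and the induced action on $S$ has exactly four fixed points. The point is to arrange that $e = 2$ rather than $e = 0$: the $\FF_0$ case would force the general curve of $\Gamma$ to flow between $Y_{i-1}$ and $Y_{i+1}$ passing through $Y_i$ transversally, whereas choosing $\Gamma$ so that $[C_0]$ (which itself passes through a point of $Y_i$, being reducible after specialization, or rather which degenerates through $Y_i$) lies on $\Gamma$ forces the minimal section of $S$ — the section of negative self-intersection $-2$ — to be contracted by $q$. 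Concretely: the four fixed points of the $\FF_2$-action are the two on the minimal section and the two on a disjoint $+2$-section; the evaluation $q$ sends the two fixed points on the minimal section to $P_i \in Y_i$ (collapsing the whole $(-2)$-section), and the two on the other section to $P_{i-1} \in Y_{i-1}$ and $P_{i+1}\in Y_{i+1}$. This matches Figure 4.1 of \cite{OS2}.

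**The geometric core and the obstacle.** The heart of the argument is to show that $q|_S$ contracts the minimal section to a single point lying in $Y_i$ — equivalently, that the two fixed points of $S$ on the $(-2)$-curve have the same image under $q$, and that image is fixed of $L$-weight $id$. For the weight: the two sections of $S$ have $L$-degrees $2d$ (the $+2$-section, flowing $Y_{i-1}\to Y_{i+1}$) and $0$ (the minimal section), by Lemma \ref{lem:AMvsFM} applied to the orbit closures in $S$, so $q$ is constant on the minimal section; and the fibers of $S$ have $L$-degree $d$ each (being minimal curves split appropriately), forcing the common image point to sit at weight $id$, i.e. in $Y_i$ by Proposition \ref{prop:onecomp}. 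That $q$ is actually constant on the minimal section (not merely of degree zero, which a priori allows a finite map to a point that is not reduced, but here degree zero plus properness of an orbit closure does give constancy) uses equalization. The main obstacle I anticipate is the \emph{surjectivity of the construction in $P_{i-1}$}: showing that for \emph{every} $P_{i-1}\in Y_{i-1}$ such a $\Gamma$ exists, not just for a general one. This should follow from the smoothness and surjectivity of $q:\cU\to X$ (beautifulness, Remark \ref{rem:convexcurves}) together with a $\C^*$-equivariant deformation argument: the locus in $\cM$ of fixed curves with sink exactly $P_{i-1}$ is nonempty and, using the affine-bundle structure of BB-cells, one can choose the orbit $\Gamma$ so that its closure's sink-side fixed point evaluates to the prescribed $P_{i-1}$. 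I would also need to double-check that $\Gamma$ can be taken to avoid curves fixed pointwise in $X$, so that Remark \ref{rem:f2} applies; this is a general-position statement within the $\C^*$-invariant subvariety of $\cM$ cut out by "sink in $Y_{i-1}$, source in $Y_{i+1}$", and reducedness/irreducibility of $\cU_{P_{i-1}}$ makes it available.
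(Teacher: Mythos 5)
There is a genuine gap, and it sits at the very first step of your construction. You propose to start from a $\C^*$-invariant member $C_0$ of $\cM$ with sink $P_{i-1}\in Y_{i-1}$ and source in $Y_{i+1}$, of $L$-degree $2d$. No such member of $\cM$ exists: all curves of the minimal family have the same $L$-degree, namely $d$, which by Lemma \ref{lem:AMvsFM} is exactly the degree of the invariant members joining \emph{consecutive} fixed components (these are the curves whose existence \cite[Theorem~5.3]{OS2} provides, and which the actual proof uses). The irreducible invariant curve of degree $2d$ from $Y_{i-1}$ to $Y_{i+1}$ that you have in mind is the image of the $+2$-section of the $\FF_2$ you are trying to build — it lives in $X$ but is not parametrized by $\cM$. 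Your own hedges (``or rather in a deformation-family of minimal curves'', ``reducible after specialization'') signal the problem but do not resolve it: once $[C_0]$ is not a point of $\cM$, the phrases ``an orbit through a point near $[C_0]$'' and ``choosing $\Gamma$ so that $[C_0]$ lies on $\Gamma$'' are undefined, and with them your mechanism for forcing $e=2$ rather than $e=0$ collapses.

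The second, related, problem is that you place $\Gamma$ inside $p(\cU_{P_{i-1}})$. Since $\cU_{P_{i-1}}$ is closed and invariant, any orbit closure there parametrizes only curves through $P_{i-1}$, so $q$ contracts an entire section of $S$ to $P_{i-1}$; a contracted section must be the negative one, so what you obtain (at best) is an $\FF_2$ whose \emph{minimal section} maps to the prescribed point $P_{i-1}$ and whose sink maps to $Y_{i-2}$ — the desired configuration shifted down by one level, with the roles of ``prescribed point'' and ``pivot point'' interchanged. The correct construction pivots one level higher: first use \cite[Theorem~5.3]{OS2} to join $P_{i-1}$ to a point $P_i\in Y_i$ by an invariant member $C_-$ of $\cM$, then take $\Gamma$ to be the closure of a general orbit in $p(\cU_{P_i})$ with sink $[C_-]$. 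Its source $[C_+]$ joins $P_i$ to some $P_{i+1}\in Y_{i+1}$, every curve of $\Gamma$ passes through the common point $P_i$, and it is precisely this common point (together with the fact that $C_\pm$ are not pointwise fixed) that forces $S\simeq\FF_2$ via \cite[Proposition~4.6]{OS2}, with the minimal section contracted to $P_i$ and the sink and source of $S$ mapping to $P_{i-1}$ and $P_{i+1}$. Your degree bookkeeping on the sections of $S$ and your identification of the key point (contraction of the minimal section) are sound, but they describe the target, not a construction that reaches it.
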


\begin{proof}
Take a general $\C^*$-invariant curve $C_-$ in $\cM$  connecting $P_{i-1}\in Y_{i-1}$ to  $Y_i$ and denote its source by $P_{i}\in Y_i$; the existence of $C_-$ is guaranteed by \cite[Theorem~5.3]{OS2}. Now consider the action of $\C^*$ on the invariant subvariety $p(\cU_{P_i})\subset\cM$. By construction, the element $[C_-]$ belongs to the sink of the action on $p(\cU_{P_i})$. Let $\Gamma \subset p(\cU_{P_i})$ be the closure of a general orbit with sink $[C_-]$. Its source $[C_+]$
parametrizes a  $\C^*$-invariant curve $C_-$ in $\cM$  with sink $P_i$ and source $P_{i+1} \in Y_{i+1}$.
In particular, we have constructed a $\C^*$-invariant connected cycle $C_{\mbox{\scriptsize{s}}} = C_-+C_+$ linking $P_{i-1}\in Y_{i-1}$ to $P_{i+1}\in Y_{i+1}$. 

Consider $S= p^{-1}(\Gamma)$; since the induced action on $\cM$ is equalized (\cite[Proposition 4.5]{OS2}), 
$S$ is a smooth ruled surface. Since $C_\pm$ are not fixed and $P_i \in C$, by \cite[Proposition 4.6]{OS2}, $S \simeq \FF_2$. The action is the one described in Remark \ref{rem:f2}.
\end{proof}

Let us set
\begin{equation}\label{eq:I}
I :=\{k\,|\, \dim Y_k >0\}\subseteq \{0,\dots r\}.
\end{equation}

\begin{construction}\label{con:gij}
Given $i\neq 0,r$, we consider a point $P_{i-1}\in Y_{i-1}$, and the $1$-dimensional family of orbits linking $P_{i-1}$ to a point $P_{i+1}$ in $Y_{i+1}$ constructed in Lemma \ref{lem:smoothij}. Closing these orbits and adding  
$\C^*$-invariant cycles $\Delta_0$,  linking $P_{i-1}$ to the sink and $\Delta_r$, linking $P_{i+1}$ to the source we obtain  a curve $\Gamma_i\subset \CX$. 

In fact, the Chow quotient of the ruled surface $\FF_2$ with respect to the $\C^*$-action described in Remark \ref{rem:f2} is isomorphic to the projectivization of the tangent space at its sink, i.e., to $\P^1$, and the equivariant morphism
$\FF_2\to X$ given by Lemma \ref{lem:smoothij} induces a morphism $\PP^1 \to \CX_{i-1,i+1}$. 
Moreover, we have two constant maps $\PP^1\to \CX_{0,i-1}$, $\PP^1\to \CX_{i+1,r}$, defined by the choice of $\Delta_0$ and $\Delta_r$. Since $\CX$ is the normalized inverse limit of  diagram (\ref{fig:Chow0}), we get a morphism from $\PP^1$ to $\CX$. 

\begin{figure}[h!]
\begin{tikzpicture}
\node at (0,0) {\includegraphics[width=12.5cm]{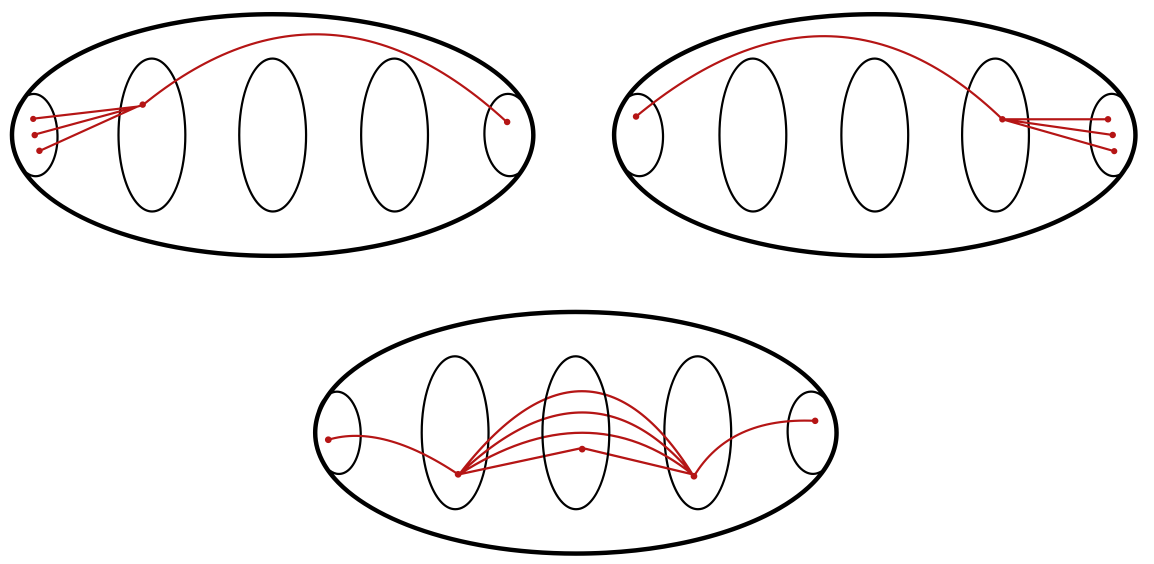}};
\node at (-5.55,2) {\tiny $Y_0$};
\node at (-4.35,2.5) {\tiny $Y_1$};
\node at (-1.05,1.2) {\tiny $Y_r$};
\node at (0.95,1.1) {\tiny $Y_0$};
\node at (4.2,0.7) {\tiny $Y_{r-1}$};
\node at (5.5,1.2) {\tiny $Y_r$};
\node at (-2.25,-1.25) {\tiny $Y_0$};
\node at (-0.9,-0.8) {\tiny $Y_{i-1}$};
\node at (0,-0.65) {\tiny $Y_i$};
\node at (0.95,-0.75) {\tiny $Y_{i+1}$};
\node at (2.25,-1.1) {\tiny $Y_r$};
\end{tikzpicture}
\caption{The $1$-dimensional families of $\C^*$-invariant cycles determining the curves $\Gamma_0,\Gamma_r,\Gamma_i$, ($i=1,\dots,r-1$).\label{fig:Curve2}}
\end{figure}

In a similar way, if $0$ or $r$ belong to $I$ we can construct rational curves $\Gamma_0, \Gamma_r \subset \CX$.
Let us explain the case $i=0$ (the case $i=r$ is analogous).

If $0 \in I$ we consider a point $P_1\in Y_1$, and a line $\ell\subset\P(\cN(Y_1)^+_{P_1})$. For every element of $\ell$ we have a $\C^*$-invariant curve in $\cM$ linking $P_1$ to $Y_0$; adding a fixed $\C^*$-invariant cycle $\Delta_r$, linking $P_1$ and the source, we obtain a $1$-dimensional family of $\C^*$-invariant cycles, parametrized by a rational curve $\Gamma_0\subset \CX$. 
\end{construction}

\begin{definition}\label{def:curves}
We will call {\em primal curves} in $\CX$ the curves $\Gamma_i, i \in I$ described in Construction \ref{con:gij}. We have represented the families of cycles parametrized by these curves in Figure  \ref{fig:Curve2}.
\end{definition}

\subsection{Intersection numbers}

In this section we will compute the intersection matrix of primal curves and boundary divisors. 

We note first that the cycles parametrized by $\Gamma_0$ (respectively $\Gamma_r$) are singular only at points of $Y_1$ (respectively $Y_{r-1}$), hence $ E_i\cdot\Gamma_{0}=0$ for $i\geq 1$ (resp. $E_i\cdot\Gamma_{r}=0$ for $i\leq r$). On the other hand, the cycles parametrized by $\Gamma_j$ can only be singular at points of $Y_{j-1},Y_j,Y_{j+1}$, therefore $E_i\cdot \Gamma_j=0$ for $i\neq j-1,j,j+1$.  We are left with computing the values $a_{ij}$, $i=j$, $i=j-1$, $i=j+1$.

\begin{lemma}\label{rem:intnumbers2}
$E_i\cdot\Gamma_j=-1$ if $j=i-1,i+1$.
\end{lemma}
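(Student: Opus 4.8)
The plan is to prove $E_{j-1}\cdot\Gamma_j=-1$ for every admissible $j$ (those with $2\le j\le r$, and, when $0\in I$, the analogous $E_1\cdot\Gamma_0=-1$), and to obtain the case $E_{j+1}\cdot\Gamma_j=-1$ from the symmetry $t\mapsto t^{-1}$: this inverts the action, interchanges the sink and the source, carries $Y_k$ to $Y_{r-k}$ — hence $E_i$ to $E_{r-i}$ and $\Gamma_k$ to $\Gamma_{r-k}$ — and leaves $\CX$ and all intersection numbers unchanged, so that $E_{j+1}\cdot\Gamma_j$ (computed for the action) equals $E_{(r-j)-1}\cdot\Gamma_{r-j}$ (computed for the inverse action), which is $-1$ by the case already treated. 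So fix $i$ and put $j=i+1$. By \cite[Lemma~5.7]{WORS6} and the discussion in Section~\ref{ssec:boundary}, the divisor $E'_{j-1}$ is the exceptional divisor of the two smooth blowups $\prur\colon\CX_{j-2,j}\to\GX_{j-1,j}$ and $\prul\colon\CX_{j-2,j}\to\GX_{j-2,j-1}$, and $E'_{j-1}\cong\P(N^-(Y_{j-1}))\times_{Y_{j-1}}\P(N^+(Y_{j-1}))$. Let $f\colon\CX\to\CX_{j-2,j}$ be the natural iterated blowup morphism; by Lemma~\ref{lem:Eiblowup} we have $f^*E'_{j-1}=E_{j-1}$. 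Hence, setting $\bar\Gamma_j:=f(\Gamma_j)$, once we check that $f|_{\Gamma_j}\colon\Gamma_j\to\bar\Gamma_j$ has degree one the projection formula gives
\[
E_{j-1}\cdot\Gamma_j=f^*E'_{j-1}\cdot\Gamma_j=E'_{j-1}\cdot\bar\Gamma_j,
\]
and the problem is reduced to a computation inside $\CX_{j-2,j}$.

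The second step is to describe $\bar\Gamma_j$ concretely. A general element of $\Gamma_j$ is a cycle $\Delta_0\cup C'\cup\Delta_r$, where $\Delta_0$ is a fixed orbit closure joining $Y_0$ to a fixed point $P_{j-1}\in Y_{j-1}$, $\Delta_r$ is a fixed orbit closure joining a point of $Y_{j+1}$ to $Y_r$, and $C'$ runs through the one-parameter family of Construction~\ref{con:gij} of orbit closures with sink $P_{j-1}$ and source in $Y_{j+1}$. Taking the closure of its trace on $X_{j-2,j}$, one obtains a nodal chain of two smooth rational curves whose node is $P_{j-1}$: a fixed one, coming from $\Delta_0$, joining the sink of $X_{j-2,j}$ to $Y_{j-1}$, and a varying one, coming from $C'$, joining $Y_{j-1}$ to the source of $X_{j-2,j}$. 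Thus $\bar\Gamma_j\subset E'_{j-1}$; its projection to $Y_{j-1}$ is the constant $P_{j-1}$, so $\bar\Gamma_j$ lies in the fibre $\P(N^-(Y_{j-1}))_{P_{j-1}}\times\P(N^+(Y_{j-1}))_{P_{j-1}}$; there, one of the two coordinates — the tangent direction at $P_{j-1}$ of the fixed branch $\Delta_0$ — is constant, while the other records the tangent direction at $P_{j-1}$ of the varying curve $C'$.

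The heart of the argument is to show that this last coordinate sweeps out a line, traversed once. For this I would use the ruled surface $S=p^{-1}(\Gamma)\cong\FF_2$ furnished by Lemma~\ref{lem:smoothij}: its sink $P$ satisfies $q(P)=P_{j-1}$, its Chow quotient is $\P(T_{S,P})\cong\P^1$, and the point $[v]\in\P(T_{S,P})$ parametrizes the orbit closure $C_{[v]}\subset S$ which is tangent to $[v]$ at $P$ (every direction occurs because the action is equalized at the sink, so $\C^*$ acts on $T_{S,P}$ through a single weight). The only curve contracted by $q\colon S\to X$ is the minimal section, which joins the two inner fixed points of $S$ and therefore avoids $P$; hence $q$ is an isomorphism near $P$, so $dq_P$ is injective and $\P(dq_P)$ is a linear embedding of $\P(T_{S,P})$ with image a line inside $\P(T_{X,P_{j-1}})$, which is automatically contained in the relevant factor $\P(N^-(Y_{j-1}))_{P_{j-1}}$ (tangent directions of orbit closures with sink $P_{j-1}$ lie in $N^-(Y_{j-1})$). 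Since $C'=q(C_{[v]})$ has tangent direction $\P(dq_P)([v])$ at $P_{j-1}$, the projection of $\bar\Gamma_j$ onto that factor is exactly this linear embedding: in particular it has degree one, so $f|_{\Gamma_j}$ has degree one and $\bar\Gamma_j$ is a line lying in a single fibre of the projection of $E'_{j-1}$ onto the other factor.

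It remains to read off the intersection number. By \cite[Remark~5.10]{WORS6}, the factor onto which $\bar\Gamma_j$ is contracted is the centre of one of the two blowdowns $\CX_{j-2,j}\to\GX_{j-1,j}$, $\CX_{j-2,j}\to\GX_{j-2,j-1}$; writing this blowdown as $\pi$, its exceptional divisor is $E'_{j-1}$, the restriction $\pi|_{E'_{j-1}}$ is the projective bundle structure onto that factor, and $N_{E'_{j-1}/\CX_{j-2,j}}$ is its relative $\cO(-1)$. As $\bar\Gamma_j$ is a line contained in one of its fibres, a projective space $\P^m$, we get $E'_{j-1}\cdot\bar\Gamma_j=\deg\bigl(\cO_{\P^m}(-1)|_{\bar\Gamma_j}\bigr)=-1$, whence $E_{j-1}\cdot\Gamma_j=-1$. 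The extremal primal curves $\Gamma_0$ and $\Gamma_r$ (when $0,r\in I$) are treated in exactly the same way, the line $\ell$ of Construction~\ref{con:gij} playing the role of $\P(T_{S,P})$ and already exhibiting the varying branch as a line in a fibre of $E'_1\cong E_1$, resp.\ $E'_{r-1}\cong E_{r-1}$, which gives $E_1\cdot\Gamma_0=-1$, resp.\ $E_{r-1}\cdot\Gamma_r=-1$. The only genuinely geometric input is the analysis of the third paragraph — that the tangent directions of the varying orbit closures describe precisely a line, run through once; everything else is formal once Lemmas~\ref{lem:Eiblowup} and~\ref{lem:smoothij} and the structure of the blowups recalled from \cite{WORS6} are in place.
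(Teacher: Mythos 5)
Your overall strategy coincides with the paper's: reduce, via Lemma \ref{lem:Eiblowup} and the projection formula, to computing $E'_{j-1}\cdot\bar\Gamma_j$ inside $\CX_{j-2,j}$, and then show that $\bar\Gamma_j$ is a line in a fibre of the smooth blowup $\prur\colon\CX_{j-2,j}\to\GX_{j-1,j}$, so that the intersection number is $\deg\bigl(\cO(-1)|_{\bar\Gamma_j}\bigr)=-1$. The symmetry $t\mapsto t^{-1}$ for the case $j=i-1$ is a legitimate substitute for the paper's ``the other case is analogous'', and your identification of which factor of $E'_{j-1}\simeq\P(N^-(Y_{j-1}))\times_{Y_{j-1}}\P(N^+(Y_{j-1}))$ stays constant is correct.

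The gap is in the step you yourself single out as the geometric heart: the injectivity of $dq_P|_{T_{S,P}}$. You argue that the only curve contracted by $q|_S$ is the minimal section, that this section avoids $P$, and ``hence $q$ is an isomorphism near $P$, so $dq_P$ is injective''. Neither implication is justified. First, nothing in Lemma \ref{lem:smoothij} or Remark \ref{rem:f2} guarantees that the minimal section is the \emph{only} contracted curve. Second, and more seriously, the absence of contracted curves through $P$ does not make $q|_S$ an immersion at $P$: a finite birational morphism can contract no curve and still have non-injective differential at a point (e.g.\ the normalization of a surface along a cuspidal curve), and a priori $q|_S$ need not even be birational onto its image. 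Equivalently, what you need is the transversality $T_{S,P}\cap T_{\cU_{P_{j-1}},P}=0$ inside $T_{\cU,P}$, and the set-theoretic fact that $S\cap \cU_{P_{j-1}}=\{P\}$ does not imply it. The paper closes exactly this gap by using equalization at $Y_{j-1}$ \emph{in $X$}, not merely on $S$: since $X^-(P_{j-1})$ is equivariantly isomorphic to $N^-(Y_{j-1})_{P_{j-1}}$ with all weights $-1$, orbits of $X$ with sink $P_{j-1}$ are in bijection with their tangent directions in $\P(N^-(Y_{j-1})_{P_{j-1}})$; as the orbit closures $q(C_{[v]})$ form a nonconstant one-parameter family, the tangent-direction map $\P(T_{S,P})\to\P(N^-(Y_{j-1})_{P_{j-1}})$ is injective, which forces $dq_P|_{T_{S,P}}$ to be injective (a nonzero kernel would make that map constant away from one point) and exhibits $\bar\Gamma_j$ as the line $\P\bigl(dq_P(T_{S,P})\bigr)$, traversed once. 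With this repair your argument becomes the paper's.
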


\begin{proof}
We will show the statement in the case $j=i+1$; the case $j=i-1$ is analogous. 
We consider the following diagram, with  Cartesian squares:
$$
\xymatrix@=3.7mm{&&\CX_{0,r}\ar[ld]_{\prul}\ar[rd]^{\prur}&\\
&\CX_{0,i+1}\ar[rd]^{\prur}&&\CX_{i-1,r}\ar[rd]^{\prur}\ar[ld]_{\prul}\\
&&\CX_{i-1,i+1}\ar[rd]^{\prur}&&\CX_{i,r}\ar[ld]_{\prul}\\&&&\GX_{i,i+1}&}
$$
We claim that the image  $\Gamma'_{i+1}$ of  $\Gamma_{i+1}$ into $\CX_{i-1,i+1}$, via the isomorphism $E'_i\simeq \P(N^+(Y_i))\times_{Y_i}\P(N^-(Y_i))$ given by the Bia{\l}ynicki-Birula Theorem, is  a line in a fiber of the projection $\P(N^+(Y_i))\times_{Y_i} \P(N^-(Y_i))\to \P(N^+(Y_i))$. 

In fact the cycles of the family $\Gamma_{i+1}$ are constructed by means of an equivariant morphism $\FF_2\to X$, so that the sink $x_-$ of the action on $\FF_2$ maps to a point $P_i\in Y_i$ contained in all the cycles parametrized by $\Gamma_{i+1}$. Since the action of $\C^*$ on $X$ is equalized, the cycles parametrized by $\Gamma_{i+1}$ must have different tangent directions at $P_i$, and we may then conclude that the differential of the map $\FF_2\to X$ at $x_-$ is injective. The line obtained as the projectivization of its image, which lies in $\P(N^-(Y_i)_{P_i})$, is the curve $ \Gamma'_{i+1}$.

By Lemma \ref{lem:Eiblowup}, $E_i$ is the pullback to $\CX$ of the exceptional divisor $E'_i$ of the map $\prur: \CX_{i-1,i+1}\to \GX_{i,i+1}$. On the other hand, $\Gamma'_{i+1}$ is a line in a fiber of the contraction of $E'_i\simeq \P(N^-(Y_{i}))\times_{Y_{i}}\P(N^+(Y_{i}))\to \P(N^+(Y_{i}))$. Since the map $\prur$ is a blowup with a smooth center, we conclude that:
\[
E_i\cdot \Gamma_{i+1}=E'_i\cdot \Gamma'_{i+1}=-1. \qedhere
\]
\end{proof}

\begin{lemma}\label{lem:intnumbers1}
$E_i\cdot\Gamma_i=2$ for every $i=1,\dots,r-1$.
\end{lemma}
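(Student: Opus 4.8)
The plan is to compute $E_i \cdot \Gamma_i$ by working inside the single blowup $\prur\colon \CX_{i-1,i+1}\to\GX_{i,i+1}$ (respectively $\prul\colon\CX_{i-1,i+1}\to\GX_{i-1,i}$), exactly as in the proof of Lemma \ref{rem:intnumbers2}. By Lemma \ref{lem:Eiblowup}, $E_i$ is the pullback to $\CX$ of the exceptional divisor $E'_i\simeq\P(N^-(Y_i))\times_{Y_i}\P(N^+(Y_i))$ of $\prur$; and by the projection-formula argument used for Lemma \ref{rem:intnumbers2}, we may replace $\Gamma_i$ by its image $\Gamma'_i\subset\CX_{i-1,i+1}$ and compute $E'_i\cdot\Gamma'_i$ there, provided $\Gamma_i$ maps (birationally) into $\CX_{i-1,i+1}$, which it does because $\Gamma_i$ is built from the equivariant map $\FF_2\to X$ of Lemma \ref{lem:smoothij} together with constant data over the prunings $\CX_{0,i-1}$ and $\CX_{i+1,r}$.

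The first step is to identify $\Gamma'_i$ explicitly. Unlike in Lemma \ref{rem:intnumbers2}, the cycles of the family $\Gamma_i$ are \emph{singular at $Y_i$} (the node $P_i$ of the cycle $C_{\mbox{\scriptsize s}}=C_-+C_+$ lies in $Y_i$), and the whole point of the construction via $\FF_2$ is that, as the parameter moves, the node sweeps out a full $1$-parameter family: the source of $C_-$ and the sink of $C_+$ both move. Concretely, under the Bia\l ynicki--Birula identification $E'_i\simeq\P(N^-(Y_i))\times_{Y_i}\P(N^+(Y_i))$, a cycle of $\Gamma_i$ singular at $P_i$ records the tangent direction of $C_-$ at $P_i$ (a point of $\P(N^-(Y_i)_{P_i})$) and the tangent direction of $C_+$ at $P_i$ (a point of $\P(N^+(Y_i)_{P_i})$). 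One checks — using that the $\C^*$-action is equalized, so distinct cycles of $\Gamma_i$ have distinct tangent directions at $P_i$, and using the geometry of the $\FF_2$ (Remark \ref{rem:f2}): its minimal section is contracted to $P_i$, so the two rulings through the contracted section move in correlated fashion — that $\Gamma'_i$ is a curve in the fiber $F\simeq\P^1\times\P^1$ of $E'_i\to Y_i$ over $P_i$, and that it is a diagonal-type curve of bidegree $(1,1)$ there. Equivalently, $\Gamma'_i$ is the image of the minimal section / a section of the $\FF_2$ under the derivative map at $x_-$, which projects isomorphically onto \emph{both} factors $\P(N^-(Y_i)_{P_i})$ and $\P(N^+(Y_i)_{P_i})$; this is exactly the $(1,1)$-curve.

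The second step is the intersection computation. Since $\prur$ is a blowup with smooth center $Y_i\subset\GX_{i,i+1}$ and $E'_i=\P(\cN)$ for $\cN$ the conormal bundle, the restriction $\cO_{\CX_{i-1,i+1}}(E'_i)|_{E'_i}=\cO_{\P(\cN)}(-1)$, and $E'_i$ is itself a $\P^{\nu^-_i-1}\times_{Y_i}\P^{\nu^+_i-1}$-bundle with the tautological class equal to (minus) the sum of the two hyperplane classes $h^-,h^+$ pulled back from the two projective bundle factors — this is the content of identifying $N_{Y_i}$ with $N^-(Y_i)\oplus N^+(Y_i)$ as in \eqref{eq:normal+-}, and of the description of $\GX_{i,i+1}$ near $Y_i$. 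Hence $E'_i\cdot\Gamma'_i=-(h^-+h^+)\cdot\Gamma'_i=-(1)-(1)=-2$? — here I must be careful about a sign: $E_i\cdot\Gamma_i$ is \emph{positive} because $\Gamma_i$ is not contained in $E_i$ and actually meets the other boundary divisors $E_{i-1},E_{i+1}$ negatively (Lemma \ref{rem:intnumbers2}), forcing $E_i\cdot\Gamma_i>0$ by an effectivity/positivity consideration on $\sum E_j\cdot\Gamma_i$. The cleanest route: $\Gamma'_i$ is a $(1,1)$-curve \emph{inside a fiber} of the contraction $\prur$, and on such a curve $\cO(E'_i)$ restricts to $\cO(1)\boxtimes\cO(1)$ (not its dual) once one tracks the correct sign of the exceptional-bundle polarization; thus $E'_i\cdot\Gamma'_i=2$. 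Therefore $E_i\cdot\Gamma_i=E'_i\cdot\Gamma'_i=2$.

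The main obstacle I anticipate is pinning down the bidegree of $\Gamma'_i$ in $\P(N^-(Y_i)_{P_i})\times\P(N^+(Y_i)_{P_i})$ and the precise sign/twist of $\cO(E'_i)|_{E'_i}$ simultaneously. The bidegree claim rests on a careful analysis of the equivariant morphism $\FF_2\to X$ of Lemma \ref{lem:smoothij}: one must show that the derivative at the sink $x_-$ of $\FF_2$ has image a plane meeting both $N^-(Y_i)_{P_i}$ and the "other" direction non-trivially, and that the section of $\FF_2$ parametrizing the family maps to a curve projecting with degree one to each factor. Once the $(1,1)$-statement is established, the intersection number $E_i\cdot\Gamma_i=2$ follows formally from the blowup formula for $\cO(E'_i)|_{E'_i}$ over the smooth center $Y_i$, together with Lemma \ref{lem:Eiblowup} and the projection formula along $\CX\to\CX_{i-1,i+1}$.
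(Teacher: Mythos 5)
Your approach breaks down at the central geometric claim, namely that the image $\Gamma'_i$ of $\Gamma_i$ lies \emph{inside} the exceptional divisor $E'_i$ as a $(1,1)$-curve in a fiber of $E'_i\to Y_i$. This is not the case: in the family of cycles parametrized by $\Gamma_i$, only \emph{one} member is singular at a point of $Y_i$, namely the image of the degenerate chain $f_0+C_0+f_\infty$ of the ruled surface $\FF_2$ of Lemma \ref{lem:smoothij} (the two invariant fibers plus the contracted minimal section). The general member is an irreducible invariant curve joining $P_{i-1}$ to $P_{i+1}$ that misses $Y_i$ entirely, so there is no node at $Y_i$ "sweeping out a $1$-parameter family". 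Hence $\Gamma_i\not\subset E_i$, the set-theoretic intersection $\Gamma_i\cap E_i$ is the single point $[C_{\mathrm{s}}]$, and $E_i\cdot\Gamma_i$ is a \emph{local intersection multiplicity} at that point. (What is true is that $\Gamma_i\subset E_{i-1}\cap E_{i+1}$, since every member contains the fixed subcycles through $P_{i-1}$ and $P_{i+1}$; that is exactly why the exceptional-divisor method of Lemma \ref{rem:intnumbers2} applies to compute $E_{i\pm 1}\cdot\Gamma_i=-1$, and why it does not transplant to $E_i\cdot\Gamma_i$.) Your sign discussion also signals the problem: for a smooth blowup $\cO(E'_i)|_{E'_i}$ is the tautological $\cO(-1)$, negative on the fibers over the center, so a curve genuinely contained in $E'_i$ and vertical over $Y_i$ would meet $E'_i$ negatively; flipping $-2$ to $+2$ because "the answer should be positive" is circular, and the positivity you invoke is precisely the fact that $\Gamma_i\not\subset E_i$.

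What the lemma actually requires, and what your proposal does not supply, is the computation of the order of contact of $\Gamma_i$ with the divisor $E_i$ at $[C_{\mathrm{s}}]$. The paper does this by linearizing: using the equivariant local triviality of $X$ along $Y_i$ at $P_i$, the family $\Gamma_i$ becomes a pencil of conics $\{x_1y_1+z_1^2=0,\ z_1+c=0\}$ in a $3$-dimensional slice transverse to $Y_i$, degenerating to the line pair $x_1y_1=z_1=0$ at $c=0$; one then checks that the locus of reducible members (i.e.\ $E_i$) pulls back to the pencil as $c^2=0$. In other words, the "$2$" is the classical order-two tangency of a pencil of conics with the discriminant at a point where the conic acquires a node. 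A correct blowup-based argument would have to recover this multiplicity, e.g.\ by showing that the strict transform of $\Gamma'_i$ in $\CX_{i-1,i+1}$ meets $E'_i$ transversally at a point with suitable tangent data and keeping track of the multiplicity of $E'_i$ along $\Gamma'_i$ under $\prur$ --- none of which is present in the proposal.
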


\begin{proof}
Consider a curve $\Gamma_i$ in $\CX$; there exists precisely one element $C_{\mbox{\scriptsize{s}}}$ in this family of cycles meeting the fixed point component $Y_i$ (see Lemma \ref{lem:smoothij}); let us denote the intersection point by $P_i$, and note that two components of $C_{\mbox{\scriptsize{s}}}$ are $\C^*$-invariant curves of $\cM$ linking $P_i$ to points $P_{i-1}\in Y_{i-1}$, $P_{i+1}\in Y_{i+1}$, respectively. We now use \cite[Theorem~2.5]{BB} to claim that $X$ is locally $\C^*$-equivariantly isomorphic to $U\times {N_{Y_i|X,P_i}}$ for an open neighborhood $U$ of $P_i$ in $Y_i$. 

We consider $(x_1,\dots,x_{\nu_i^-})$ and $(y_1,\dots,y_{\nu_i^+})$ linear coordinates in the normal spaces $N(Y_i)_{P_i}^-$, $N(Y_i)^+_{P_i}$, respectively, and $z_1,\dots,z_k$ local coordinates of $Y_i$ around $P_i$. We can appropriately choose these sets of coordinates so that, on a neighborhood of $P_i$, the cycles parametrized by the curve $\Gamma_i$ have equations:
\[
x_1y_1+z_1^2=z_1+c=0,\quad x_i=y_i=z_i=0 \,\,\, (i\geq 2),
\] 
where $c$ is a complex parameter. This family (whose parameter space is an open set of $\Gamma_i$) contains a unique reducible conic, (given by $c=0$). We will compute the multiplicity of intersection of $\Gamma_i$ with the set of reducible conics in the three-dimensional space of coordinates $x_1,y_1,z_1$ at $c=0$.

 In order to do so, we note first that the linear span of a deformation of a conic of equations $x_1y_1+z_1^2=z_1+c=0$, has equation of the form $z_1+c=ax_1+by_1$, $a,b\in \C$. In each of these planes we have a distinguished conic with equations
$x_1y_1+z_1^2=z_1+c-ax_1-by_1=0$. By adding the deformations of these conics within their linear span, we get a parametrization of the deformations of our conic $(x_1y_1=z_1^2=0)$ in the $3$-dimensional space:
\[
x_1y_1+z_1^2+(dx_1^2+ey_1^2+gx_1+hy_1+f)=z_1+c-ax_1-by_1=0,\quad a,b,c,d,e,f,g,h\in \C,
\]
which can be written as:
\[
\begin{pmatrix}x_1&y_1&1\end{pmatrix}
\begin{pmatrix}
d+a^2 & 1+ab & g-ac\\
1+ab & e+b^2 & h-bc\\
g-ac & h-bc & f+c^2
\end{pmatrix}
\begin{pmatrix}x_1\\y_1\\1\end{pmatrix}=z_1+c-ax_1-by_1=0\]
Such a conic is reducible if and only if:
\[
-f-c^2+(\mbox{terms of degree $\geq 3$})=0
\]
Restricting this equation to  $\Gamma_i$ (which is given by $a=b=d=e=f=g=h=0$), we get $c^2=0$.
\end{proof}

Summing up, we may write the intersection numbers of the divisor $E_i$ and the curves $\Gamma_j$ in the following matrix (note that $\Gamma_0$ and $\Gamma_r$ may not be defined if the sink and the source are isolated points):

\begin{proposition}\label{prop:intnumbers}
Let $X$ be as in Setup \ref{set:main}.  Then the  intersection matrix of the boundary divisors and the primal curves 
is the following:
\[
[E_i \cdot \Gamma_j]=
\begin{pmatrix}-1&2&-1&0&\dots&0&0&0\\
0&-1&2&-1&\dots&0&0&0\\
0&0&-1&\ddots&\ddots&\vdots&\vdots&\vdots\\
\vdots&\vdots&\vdots&\ddots&2&-1&0&0\\
0&0&0&\dots&-1&2&-1&0\\
0&0&0&\dots&0&-1&2&-1\end{pmatrix}
\]
with $i=1,\dots r-1$, $j = 0,\dots,r$. \qed
\end{proposition}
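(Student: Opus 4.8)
The plan is to assemble the matrix entry by entry from the facts already established in this subsection; no further geometric input is required. The matrix has rows indexed by $i=1,\dots,r-1$ and columns by $j=0,\dots,r$, with the understanding that the first (resp.\ last) column is suppressed when $0\notin I$ (resp.\ $r\notin I$), since in that case $\Gamma_0$ (resp.\ $\Gamma_r$) is not defined.

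First I would record the vanishing of the off-band entries. A cycle of $\CX$ lies on $E_i$ exactly when it is singular at a point of $Y_i$ (Theorem~\ref{thm:smoothborder}); since the cycles parametrized by $\Gamma_j$, for $1\le j\le r-1$, can only be singular along $Y_{j-1}$, $Y_j$ or $Y_{j+1}$, this gives $E_i\cdot\Gamma_j=0$ whenever $i\notin\{j-1,j,j+1\}$, and likewise $E_i\cdot\Gamma_0=0$ for $i\ge 2$ and $E_i\cdot\Gamma_r=0$ for $i\le r-2$ (as noted before Lemma~\ref{rem:intnumbers2}). Next I would fill in the three nonzero bands using the two preceding lemmas: Lemma~\ref{lem:intnumbers1} gives the main-band entries $E_i\cdot\Gamma_i=2$ for $i=1,\dots,r-1$, and Lemma~\ref{rem:intnumbers2} gives $E_i\cdot\Gamma_{i-1}=E_i\cdot\Gamma_{i+1}=-1$ for every index for which these curves exist. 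Collecting these values yields exactly the displayed banded $(r-1)\times(r+1)$ matrix.

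Since every ingredient is already in hand, there is no substantive obstacle: the only point that needs care is the bookkeeping, namely matching the row and column indices correctly and keeping track of the fact that the columns attached to $\Gamma_0$ and $\Gamma_r$ are present precisely when the sink and source of the action are positive-dimensional.
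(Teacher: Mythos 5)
Your proposal is correct and matches the paper's argument exactly: the proposition carries a \qed because it is just the assembly of the off-band vanishing noted before Lemma~\ref{rem:intnumbers2} (where the paper's ``$i\geq 1$'' and ``$i\leq r$'' are evidently typos for ``$i\geq 2$'' and ``$i\leq r-2$'', as your version correctly has it) together with Lemma~\ref{lem:intnumbers1} for the diagonal entries $2$ and Lemma~\ref{rem:intnumbers2} for the adjacent entries $-1$. Your remark about the columns for $\Gamma_0$, $\Gamma_r$ being present only when the corresponding extremal fixed components are positive-dimensional is also consistent with the paper's parenthetical caveat.
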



\section{Nef and Mori cone of the Chow quotient}\label{sec:nefmori}

We want now to describe the Nef cone and the cone of curves of $\CX$, for a variety $X$ as in Setup \ref{set:main}.
Let us start by introducing some line bundles on $\CX$:
\begin{notation}\label{not:divisors}
For every $k\in \{0,\dots,r\}$, let $\cL_{k,k} \in \Pic(\CX)$ be the pullback of an ample line bundle $\cH_{k,k}$ on $\GX_{k,k}$. 
Note that $\cL_{0,0}$ and $\cL_{r,r}$ may be trivial (if $Y_0$ or $Y_r$ are isolated points, respectively), and that the set $I=\{k\,|\, \dim Y_k >0\}\subseteq \{0,\dots r\}$ introduced in (\ref{eq:I}) can be written as $I =\{k\,|\, \cL_{k,k}\not\simeq \cO_{\CX}\}$.

We set:
\[\cA:=\bigotimes_{k} \cL_{k,k},\,\mbox{ and }\, \cD_i:=\bigotimes_{k \not =i} \cL_{k,k}=\cA\otimes\cL_{i,i}^{-1} \,\mbox{ for every $i\in \{0,\dots,r\}.$}\]
As in Definition \ref{def:Ei}, we will denote by $E_i\subset\CX$ the closure of the set parametrizing cycles having a unique singular point in $Y_i$, $i\in \{1,\dots,r-1\}$; note that, by  Theorem \ref{thm:smoothborder},  $E_i$ parametrizes {\em all} the cycles having a singular point in $Y_i$, and  $\sum E_i$ is a simple normal crossing divisor, whose support is the subset $\CX_{\sing}\subset \CX$ parametrizing singular $\C^*$-invariant cycles. In particular, every $E_i$ is smooth. Abusing notation, we will denote by $E_i\in \Pic(\CX)$ also the associated line bundles. Moreover, we will denote by $\cL_{k,k},\cA,\cD_{i},E_i\in \NU(\CX)$ the corresponding numerical classes of these divisors. 
\end{notation}

We start by determining the Picard number of $\CX$ and a basis of $\NU(\CX)$:

\begin{lemma}\label{lem:pic} 
The Picard number of $\CX$ is $|I|$, and $\NU(\CX)$
 is generated by the classes  $\cL_{i,i}$, $i \in I$. 
 \end{lemma}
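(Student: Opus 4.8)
The plan is to compute $\rho(\CX)$ by climbing Diagram (\ref{fig:Chow0}) from the bottom, keeping track of how the Picard number changes along each blowup, and then to exhibit the classes $\cL_{i,i}$, $i\in I$, as a generating set. First I would identify the Picard numbers at the base of the diagram: the geometric quotients $\GX_{i,i+1}$ all have Picard number one (they are $\C^*$-quotients of a variety of Picard number one, and indeed all connected by Atiyah flips which preserve the Picard number), while the semigeometric quotient $\GX_{k,k}$ has Picard number one precisely when $\dim Y_k>0$, i.e. when $k\in I$, and Picard number zero (a point, by Examples such as the sink/source being isolated) when $k\notin I$. The pullbacks of the ample generators of the $\GX_{k,k}$ are exactly the $\cL_{k,k}$, and $\cL_{k,k}\simeq\cO_{\CX}$ iff $k\notin I$.

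Next I would count: going up the diagram from the geometric quotients to $\CX=\CX_{0,r}$ along either extreme chain $\GX_{0,1}\leftarrow\CX_{0,2}\leftarrow\cdots\leftarrow\CX_{0,r}=\CX$ (or the mirror chain ending at $\GX_{r-1,r}$), each arrow $\prul$ or $\prur$ is, by Theorem \ref{thm:smoothpart}, a blowup with smooth irreducible center, hence raises the Picard number by exactly $1$; thus $\rho(\CX)=\rho(\GX_{0,1})+(r-1)=r$. However $\CX$ also dominates the two semigeometric quotients $\GX_{0,0}$ and $\GX_{r,r}$: when $Y_0$ (resp. $Y_r$) is positive-dimensional, $\GX_{0,0}$ (resp. $\GX_{r,r}$) is a genuine variety of Picard number one whose ample pullback $\cL_{0,0}$ (resp. $\cL_{r,r}$) is a nontrivial class; a cleaner way to phrase the bookkeeping, and the one I would actually write, is to run the same count but start from the layer of $\GX_{i,i}$'s and $\GX_{i,i+1}$'s at once, observing that $\CX$ is the normalized inverse limit of Diagram (\ref{fig:Chow0}) and that the $r-1$ blowups $\CX_{0,2},\dots,\CX_{0,r}$ each add one class on top of the single class coming from whichever $\GX_{i,i+1}$'s one starts with, while the two extra classes $\cL_{0,0},\cL_{r,r}$ are present exactly when $0\in I$, $r\in I$. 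This yields $\rho(\CX)=|I|$: indeed $|I|=(r-1)+|I\cap\{0,r\}|$, since $\{1,\dots,r-1\}\subseteq I$ (every inner fixed component is positive-dimensional under the Picard-number-one hypothesis, cf. Remark \ref{rem:picone}), and the count $(r-1)+|\{0,r\}\cap I|$ is precisely what the blowup tower plus the extremal quotients give.

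For the generation statement I would argue that the $\cL_{i,i}$, $i\in I$, are linearly independent and that there are $|I|$ of them, hence they form a basis of $\NU(\CX)_{\QQ}$. Independence follows by pairing against curves: using the primal curves $\Gamma_j$, $j\in I$, and the intersection numbers already computed, one checks that the matrix $[\cL_{i,i}\cdot\Gamma_j]_{i,j\in I}$ is nonsingular — each $\cL_{i,i}$ is pulled back from $\GX_{i,i}$, it is trivial on curves contracted by $\CX\to\GX_{i,i}$ and positive on a primal curve whose image in $\GX_{i,i}$ is a nontrivial curve, so the pairing matrix is (up to reordering) triangular with nonzero diagonal. Combined with $\rho(\CX)=|I|$ this shows $\{\cL_{i,i}\}_{i\in I}$ spans.

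The main obstacle is making the Picard-number count rigorous without circularity: one must be careful that the tower of blowups in Theorem \ref{thm:smoothpart} genuinely increases $\rho$ by $1$ at each step (true for a blowup of a smooth variety along a smooth irreducible center, by the standard formula $\Pic(\Bl_Z W)\simeq\Pic(W)\oplus\ZZ[E]$) and that the inverse-limit description correctly accounts for the two semigeometric quotients at the ends without double counting. I expect the cleanest route is: establish $\rho(\GX_{i,i+1})=1$ and $\rho(\GX_{k,k})=[k\in I]$ directly from Picard number one of $X$ and the GIT description, then invoke Theorem \ref{thm:smoothpart} to add $r-1$ to get $\rho(\CX)$, reconcile with $|I|$ via $\{1,\dots,r-1\}\subseteq I$, and finally close with the nondegenerate pairing against primal curves.
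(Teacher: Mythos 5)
Your independence argument (pairing the $\cL_{i,i}$ against the primal curves $\Gamma_j$ and noting the matrix $[\cL_{i,i}\cdot\Gamma_j]$ is diagonal with positive diagonal, since $\Gamma_j$ is contracted by $\CX\to\GX_{i,i}$ for $i\neq j$ while $\cA=\bigotimes_k\cL_{k,k}$ is ample) is a legitimate and non-circular alternative to the paper's inductive argument along the blowup tower. The problem is in your Picard number count, which rests on the claim that $\rho(\GX_{i,i+1})=1$ because these are ``$\C^*$-quotients of a variety of Picard number one.'' That claim is false: a GIT quotient of a Picard number one variety need not have Picard number one. Concretely, $\GX_{0,1}$ is the geometric quotient of the punctured negative cell of the sink, which for an equalized action is the projective bundle $\P_{Y_0}(N_{Y_0|X})$ over $Y_0$; since $\rho(Y_0)=1$, one gets $\rho(\GX_{0,1})=2$ whenever $\dim Y_0>0$, and $\rho(\GX_{0,1})=1$ only when $Y_0$ is a point. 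This is exactly the input the paper uses, and it is what makes the count close: $\rho(\CX)=\rho(\GX_{0,1})+(r-1)=r+1=|I|$ when both extremal components are positive-dimensional (with the remaining cases handled by running the same argument on the prunings $X_{1,r}$, $X_{0,r-1}$, $X_{1,r-1}$).

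Starting instead from $\rho(\GX_{0,1})=1$ you arrive at $\rho(\CX)=r$, notice the discrepancy with the classes $\cL_{0,0},\cL_{r,r}$, and try to patch it by declaring that these ``extra classes are present exactly when $0\in I$, $r\in I$''; but as written the bookkeeping gives $1+(r-1)+|I\cap\{0,r\}|=|I|+1$, and your concluding ``cleanest route'' reverts to the wrong value $\rho(\GX_{i,i+1})=1$. The two extremal classes are not added on top of the blowup tower; one of them ($\cL_{0,0}$, the pullback from $Y_0$) is already accounted for inside $\rho(\GX_{0,1})=2$, and the other ($\cL_{r,r}$) is carried by one of the $r-1$ exceptional classes via the relations coming from the other side of the rhombus diagram, so there is no double counting to reconcile. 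Replace your base-case computation by the identification $\GX_{0,1}\simeq\P_{Y_0}(N_{Y_0|X})$ and the count goes through; the rest of your plan (elementary blowups each raising $\rho$ by one because the centers are irreducible, plus the pairing with primal curves for spanning) is sound.
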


\begin{proof} Let us assume first that $\dim Y_0, \dim Y_r >0$.
By \cite[Theorem~1.2 and Remark 5.10]{WORS6}, $\CX$ is obtained from the geometric quotient $\GX_{0,1}$ via a sequence of blowups
\[
\xymatrix{\GX_{0,1}&\ar[l]_{\prul}\CX_{0,2}&\ar[l]_{\prul}\quad\ldots\quad&\ar[l]_{\prul}\CX_{0,r}=\CX}
\]
 with smooth centers $S_{0,j}\subset \CX_{0,j}$, $j=1,\dots,r-1$. Since there is only one fixed point component of the action on $X$ for each weight, the centers $S_{0,j}$ are irreducible, hence, for each $j=1, \dots,r-1$ the contraction $s:\CX_{0,j+1} \to \CX_{0,j}$ is elementary. Then the statement about the Picard number follows observing that $\rho(\CX_{0,1})=2$; in fact $\CX_{0,1}=\GX_{0,1}=\P_{Y_0}(\cN_{Y_0,X})$ and $\rho(Y_0)=1$ by \cite[Lemma 2.8]{WORS1}.

By abuse of notation we will denote by $\cL_{i,i}$ the class of the pullback of $\cH_{i,i}$ to any Chow quotient $\CX_{0,j}$, $j\geq i$. To show the linear independence of $\{\cL_{i,i}\}_{i \in I}\subset\NU(\CX)$ we proceed by induction. First we observe that $\cL_{0,0}$ and $\cL_{1,1}$ are independent in $\NU(\CX_{0,1})$ since they support two different contractions of $\CX_{0,1}$.
Assume now that $\{\cL_{i,i}\}_{i \le k-1}$ is linearly independent in $\NU(\CX_{0,k-1})$;  since the map $\NU(\CX_{0,k-1}) \to \NU(\CX_{0,k})$ is injective, $\{\cL_{i,i}\}_{i \le k-1}$ is linearly independent in $\NU(\CX_{0,k})$. To prove that $\{\cL_{i,i}\}_{i \le k}$ is linearly independent in $\NU(\CX_{0,k})$  is then enough to notice that, for every $i \le k-1$, $\cL_{i,i}$ is trivial on the fibers of $\CX_{0,k} \to \CX_{0,k-1}$, while $\cL_{k,k}$ is ample on them; in fact $\CX_{0,k}$ is the normalization of $\CX_{0,k-1}\times_{\CX_{k-1,k-1}}\CX_{k-1,k}$, so the fibers of $\CX_{0,k} \to \CX_{0,k-1}$ are, up to a finite morphism, inverse images of fibers of $\CX_{k-1,k}\to\CX_{k-1,k-1}$ which is an elementary contraction different from $\CX_{k-1,k}\to\CX_{k,k}$.

If $\dim Y_0 = 0$ and $\dim Y_r >0$ (resp. $\dim Y_0 > 0$ and $\dim Y_r =0$, resp. $\dim Y_0 = 0$ and $\dim Y_r =0$) the statement follows applying the arguments above to the pruning $X_{1,r}$ (resp. $X_{0,r-1}$, resp. $X_{1,r-1}$).
\end{proof}

The line bundle $\cA\in\Pic(\CX)$ is ample by \cite[Lemma~6.4]{WORS6}; furthermore, applying the same Lemma to the prunings $X_{i,j}$ of $X$ we get the following:

\begin{lemma}\label{lem:nefCX1}
For every $i<j$, $i,j\in\{0,\dots, r\}$, the natural morphism $\CX\to \CX_{i,j}$ is supported by the nef line bundle $\bigotimes_{i\leq k\leq j}\cL_{k,k}$. \qed
\end{lemma}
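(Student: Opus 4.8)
The plan is to reduce the statement to \cite[Lemma~6.4]{WORS6}, applied to the pruning $X_{i,j}$ in place of $X$.

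First I would recall that, by construction, $\CX_{i,j}$ is the normalized Chow quotient of the pruning $X_{i,j}$, which is again a smooth polarized pair carrying an equalized $\C^*$-action, of criticality $j-i$. Reading off the part of Diagram~(\ref{fig:Chow0}) lying below the vertex $\CX_{i,j}$, one sees that the semigeometric GIT quotients of $X_{i,j}$ are exactly $\GX_{i,i},\GX_{i+1,i+1},\dots,\GX_{j,j}$, while the geometric ones are $\GX_{i,i+1},\dots,\GX_{j-1,j}$. In particular, for every $k$ with $i\le k\le j$ the natural morphism $\CX\to\GX_{k,k}$ factors through the natural morphism $f\colon\CX\to\CX_{i,j}$; writing $\cL^{i,j}_{k,k}\in\Pic(\CX_{i,j})$ for the pullback of the ample bundle $\cH_{k,k}$ along $\CX_{i,j}\to\GX_{k,k}$, we then have $\cL_{k,k}=f^*\cL^{i,j}_{k,k}$ for all such $k$.

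Next I would apply \cite[Lemma~6.4]{WORS6} to the pruning $X_{i,j}$: it gives that $\bigotimes_{i\le k\le j}\cL^{i,j}_{k,k}$ is ample on $\CX_{i,j}$. Pulling this back along $f$ yields $f^*\bigl(\bigotimes_{i\le k\le j}\cL^{i,j}_{k,k}\bigr)=\bigotimes_{i\le k\le j}\cL_{k,k}$, which is therefore nef; moreover, since $f$ is a composition of the blowdown maps $\prul,\prur$ of Diagram~(\ref{fig:Chow0}) (Theorem~\ref{thm:smoothpart}), it has connected fibers, so it is precisely the morphism defined by this semiample line bundle, i.e.\ $f$ is supported by $\bigotimes_{i\le k\le j}\cL_{k,k}$. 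This is the asserted statement.

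The only delicate point I foresee is the bookkeeping at the two ends $k=i$, $k=j$: one must check that the extremal semigeometric quotients of the pruning $X_{i,j}$ are $\GX_{i,i}$ and $\GX_{j,j}$ (equivalently, that the composite $\CX\to\CX_{i,j}\to\GX_{i,i}$, resp.\ $\to\GX_{j,j}$, is the standard map), and that $X_{i,j}$ satisfies the hypotheses under which \cite[Lemma~6.4]{WORS6} was proved. Both facts are built into the recursive construction of Diagram~(\ref{fig:Chow0}) in \cite{WORS6}, and are exactly what is meant by ``applying the same Lemma to the prunings $X_{i,j}$'' in the sentence preceding the statement; so beyond unwinding these definitions no further argument is needed.
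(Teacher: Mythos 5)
Your proposal is correct and is essentially the paper's own argument: the paper proves the lemma precisely by ``applying the same Lemma [\cite[Lemma~6.4]{WORS6}] to the prunings $X_{i,j}$'', i.e.\ the tensor product $\bigotimes_{i\le k\le j}\cL_{k,k}$ is the pullback along $\CX\to\CX_{i,j}$ of an ample bundle on $\CX_{i,j}$, and the factorization through the quotients $\GX_{k,k}$ together with connectedness of the fibers of this birational morphism gives the claim. Your write-up just makes explicit the bookkeeping that the paper leaves implicit.
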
 

\begin{remark}\label{rem:nefCX1}
In particular, the maps $s,d$ in:
$$
\xymatrix@=6mm{&\CX\ar[ld]_{s}\ar[rd]^{d}&\\\CX_{0,r-1}&&\CX_{1,r}}
$$
are supported by the nef divisors $\cD_r$, $\cD_0$ and contract the curves $\Gamma_r$, $\Gamma_0$, respectively. For this reason, we will denote these maps by $\pi_r$ and $\pi_0$, respectively. Note that the above statement, together with \cite[Lemma~2.8]{WORS1} imply that the following conditions are equivalent:
\begin{itemize}
\item $\dim Y_0=0$;
\item $\cL_{0,0}$ is trivial;
\item $\pi_0:\CX\to \CX_{1,r}$ is an isomorphism;
\item $\nu^-_1=1$.
\end{itemize} 
In the case in which $\dim Y_0 >0$ the exceptional locus of $\pi_0$ is $E_{1}$, and the fibers of $(\pi_0)_{|E_{1}}:E_{1}\to \pi_0(E_{1})$ are projective spaces of dimension $\nu^+_1-1$ (see the proof of \cite[Lemma~5.8]{WORS6}). An analogous equivalence holds for $\pi_r:\CX\to \CX_{0,r-1}$. 

In  particular, if $\dim Y_0 >0$ (resp. if $\dim Y_r >0$), $\Nef(\CX)$ has a facet corresponding to the elementary contraction supported by $\cD_r$ (resp. $\cD_0$). We will now show that the remaining facets of $\Nef(\CX)$ are determined by the nef divisors $\cD_i$, $i\in\{1,\dots,r-1\}$, introduced above; note that $\cD_i$ is the supporting divisor of the induced morphism $$\pi_i:\CX \to \prod_{j\not =  i} \GX_{j,j}.$$ 
\end{remark} 

\begin{proposition}\label{prop:nefCX2}
For every $i\in\{1,\dots, r-1\}$, the line bundle $\cD_i$ is nef and not ample. Moreover, the associated morphism $\pi_i:\CX \to \prod_{j\not =  i} \GX_{j,j}$  contracts the primal curves of type $\Gamma_i$ to a point, and its exceptional locus is contained in $E_{i-1}\cap E_{i+1}$ (setting  $E_0=E_r=\CX$).
\end{proposition}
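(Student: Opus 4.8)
The plan is to analyze the line bundle $\cD_i = \cA \otimes \cL_{i,i}^{-1} = \bigotimes_{k\neq i}\cL_{k,k}$ by writing it as a combination of the two nef pullbacks coming from the two "halves" of the recursive diagram (\ref{fig:Chow0}), and then using the already-established intersection matrix of Proposition \ref{prop:intnumbers} to pin down its positivity and its null locus. First I would observe that, by Lemma \ref{lem:nefCX1} applied to the two prunings $X_{0,i}$ and $X_{i,r}$, the divisors $\bigotimes_{0\le k\le i}\cL_{k,k}$ and $\bigotimes_{i\le k\le r}\cL_{k,k}$ are both nef (they support the morphisms $\CX\to\CX_{0,i}$ and $\CX\to\CX_{i,r}$), hence so is their "sum minus the overlap", $\cD_i = \big(\bigotimes_{0\le k\le i}\cL_{k,k}\big)\otimes\big(\bigotimes_{i\le k\le r}\cL_{k,k}\big)\otimes\cL_{i,i}^{-1}$ — but this needs a little care since subtracting $\cL_{i,i}$ need not preserve nefness. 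The cleaner route: $\cD_i$ is by definition the pullback of an ample bundle on $\prod_{j\neq i}\GX_{j,j}$ under the product of the morphisms $\CX\to\GX_{j,j}$ ($j\neq i$), and each factor $\CX\to\GX_{j,j}$ is a morphism (it factors through the blowup tower), so $\cD_i$ is automatically nef as a pullback of an ample (hence nef) divisor under a morphism. This gives nefness with essentially no work.

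Next I would identify the contraction. The morphism $\pi_i$ associated to $\cD_i$ (i.e. the Stein factorization of $\CX\to\prod_{j\neq i}\GX_{j,j}$, or equivalently the morphism defined by a large multiple of $\cD_i$) has exceptional locus equal to the locus of curves contracted by $\cD_i$. To show $\cD_i$ is not ample and contracts $\Gamma_i$, I compute $\cD_i\cdot\Gamma_i$. Using $\cA\cdot\Gamma_i$ and $\cL_{i,i}\cdot\Gamma_i$: the curve $\Gamma_i$ maps to a point in every $\GX_{j,j}$ with $j\neq i$ (because, by Construction \ref{con:gij}, the $\Gamma_i$-family of cycles has its "sink side" and "source side" components constant — only the two components through $Y_i$ move, so under the quotient maps away from weight $i$ everything is constant), whereas it maps non-trivially to $\GX_{i,i}$; concretely, its image in $\CX_{i-1,i+1}$ is (by the proof of Lemma \ref{rem:intnumbers2}) a line in a fiber of $E_i'\to\P(N^+(Y_i))$, and such a line is contracted by $\prur:\CX_{i-1,i+1}\to\GX_{i,i+1}$ but, pushed into $\GX_{i,i}$, its image is again a point since $\GX_{i,i}$ receives both $\GX_{i-1,i}$ and $\GX_{i,i+1}$ — wait, rather: $\cL_{i,i}\cdot\Gamma_i=0$ as well because $\Gamma_i$ contracts to a point in $\GX_{i,i}$ too (the reducible conic $C_s$ and its smoothings all have the same image cycle-class downstairs). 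Hence $\cD_i\cdot\Gamma_i = \cA\cdot\Gamma_i - \cL_{i,i}\cdot\Gamma_i = 0 - 0 = 0$, so $\cD_i$ is not ample and $\pi_i$ contracts $\Gamma_i$. (If it turns out $\cA\cdot\Gamma_i\neq 0$, then one uses instead that $\cA$ is ample so $\cA\cdot\Gamma_i>0$, and $\cL_{i,i}\cdot\Gamma_i$ equals that same positive number because $\cL_{i,i}$ restricted to the $\CX_{i-1,i+1}$-picture is exactly the polarization contracted by the *other* ruling — I would reconcile this by a direct count on the $\FF_2$ of Lemma \ref{lem:smoothij}, as in the proof of Lemma \ref{lem:intnumbers1}.)

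For the exceptional locus, I would argue as follows. A curve $[C_t]$-family in $\CX$ is contracted by $\pi_i$ iff its generic member, viewed in all $\GX_{j,j}$ with $j\neq i$, is constant. Outside $E_{i-1}\cup E_{i+1}$, the map $\CX\to\CX_{i-1,i+1}$ is a local isomorphism (by Lemma \ref{lem:Eiblowup} and Theorem \ref{thm:smoothborder}, the exceptional loci of the blowups towers sitting over the $(i-1,i+1)$-spot are contained in $E_{i-1}\cup E_i\cup E_{i+1}$), and over a point of $\GX_{i,i+1}$ not in the image of $E_{i-1}$, the only contracted curves are the fibers of $E_i'\to\P(N^+(Y_i))$ — but to contract them in *both* $\GX_{i-1,i}$ and $\GX_{i,i+1}$-directions forces the member cycle to degenerate at $Y_{i-1}$ or $Y_{i+1}$ too, i.e. to lie in $E_{i-1}$ or $E_{i+1}$. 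So the exceptional locus is contained in $E_{i-1}\cap E_{i+1}$ (with the convention $E_0=E_r=\CX$ absorbing the boundary cases $i=1,r-1$). I expect the main obstacle to be precisely this last step: cleanly ruling out that any curve *outside* $E_{i-1}\cap E_{i+1}$ is $\pi_i$-contracted. The safe way to do it is to exhibit, for a point $[C]\notin E_{i-1}\cap E_{i+1}$, an explicit $\cD_i$-positive curve through it — e.g. a suitable $\Gamma_j$ with $j\neq i$, or a curve in one of the boundary divisors $E_{i-1}$ or $E_{i+1}$ moving in the "complementary" direction — using the intersection matrix of Proposition \ref{prop:intnumbers} to see $\cD_j\cdot\Gamma_j>0$ and hence $\cD_i\cdot\Gamma_j>0$ for $j\neq i$. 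This reduces the statement to the combinatorics of that matrix together with the fact that the $\Gamma_j$, $j\in I$, will (in the next results) be shown to span $\cNE{\CX}$; for the present proposition it suffices to note $\cD_i\cdot\Gamma_j = \sum_{k\neq i}\cL_{k,k}\cdot\Gamma_j > 0$ for all $j\in I\setminus\{i\}$, which is immediate once one knows $\cL_{j,j}\cdot\Gamma_j>0$ (the analogue of Lemma \ref{lem:intnumbers1} on the $\cL$-side) and $\cL_{k,k}\cdot\Gamma_j\ge 0$ always.
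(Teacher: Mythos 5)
Your proposal gets the easy parts right but has a genuine gap in the hardest claim, the containment $\Exc(\pi_i)\subseteq E_{i-1}\cap E_{i+1}$. Nefness is indeed immediate ($\cD_i$ is a tensor product of pullbacks of ample bundles, exactly as you say), and your first, correct observation --- that $\Gamma_i$ maps to a point in every $\GX_{j,j}$ with $j\neq i$ because only the middle components of the cycles vary --- already gives $\cD_i\cdot\Gamma_i=0$. But the detour you then take is wrong: $\cA$ is ample by \cite[Lemma~6.4]{WORS6}, so $\cA\cdot\Gamma_i>0$ and $\cL_{i,i}\cdot\Gamma_i>0$; the correct reconciliation is precisely that $\cL_{j,j}\cdot\Gamma_i=0$ for all $j\neq i$, whence $\cL_{i,i}\cdot\Gamma_i=\cA\cdot\Gamma_i>0$ (this positivity is exactly what makes the Mori-cone argument of Theorem \ref{thm:nefcone} work, so claiming $\cL_{i,i}\cdot\Gamma_i=0$ would be fatal later).

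The gap: your analysis of the exceptional locus only yields that a positive-dimensional fiber forces the parametrized cycle ``to degenerate at $Y_{i-1}$ \emph{or} $Y_{i+1}$,'' and the jump from this union to the intersection $E_{i-1}\cap E_{i+1}$ is not justified. Moreover, the repair you propose --- exhibiting a $\cD_i$-positive curve through every point outside $E_{i-1}\cap E_{i+1}$ --- does not show such a point is outside the exceptional locus: a point can lie on a positive-dimensional $\pi_i$-fiber and still admit many $\cD_i$-positive curves through it, so positivity of some curve through $[C]$ says nothing about the fiber through $[C]$. The paper's argument is instead a direct set-theoretic description of the fibers: by Lemma \ref{lem:nefCX1}, two cycles $[C],[C']$ have the same image under $\pi_i$ if and only if their projections to \emph{both} $\CX_{0,i-1}$ and $\CX_{i+1,r}$ agree; these two projections together remember the fixed points of all weights $\neq i$, the components with sink in $Y_j$ for $j<i-1$, and the components with source in $Y_j$ for $j>i+1$ --- that is, everything except the subcycle strictly between $Y_{i-1}$ and $Y_{i+1}$. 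Hence two distinct cycles in the same fiber can differ only in that middle piece, which forces each of them to break at a point of $Y_{i-1}$ \emph{and} at a point of $Y_{i+1}$; by Theorem \ref{thm:smoothborder} this places them in $E_{i-1}\cap E_{i+1}$, and the same description exhibits $\Gamma_i$ as lying in a single fiber. You would need to supply this fiber analysis (or an equivalent one) to close your argument.
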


\begin{proof}
We will show that for every $i$ the induced map $\pi_i:\CX \to \prod_{j\not =  i} \GX_{j,j}$ is not finite. Two cycles $[C],[C'] \in \CX$ have the same image via $\pi_i$ if and only if their projections to $\CX_{0,i-1}$ and $\CX_{i+1,r}$ are the same, that is, if the following three conditions hold:
\begin{itemize}
\item their fixed points of weights different from $i$ coincide,  
\item their irreducible components  with sink in $Y_j$, $j < i-1$, are equal, and 
\item their irreducible components with source in $Y_j$, $j > i+1$, are equal.
\end{itemize}

In particular if $[C]\neq [C']$ have the same image via $\pi_i$, then they are singular at points of  $Y_{i-1}$ and $Y_{i+1}$, have the same fixed points in those components (say $P_{i-1}$, $P_{i+1}$), their subcycles contained in $B_{i-1}^+$ and $B_{i+1}^-$ are equal, while their subcycles joining $P_{i-1},P_{i+1}$ are different. This, on one hand tells us that the exceptional locus of $\pi_i$ is contained in $E_{i-1}\cap E_{i+1}$, and on the other that $\pi_i$ contracts the primal curve $\Gamma_i$. This concludes the proof.
\end{proof}

\begin{remark}
By Proposition \ref{prop:nefCX2}, the contractions $\pi_i$ are small if $2 \le i \le r-2$. 
Moreover, we already know that $\pi_i$ is a smooth blowup if $i$ is the minimum or maximum index belonging to $I$.
On the other hand, in the case in which $0\in I$ (resp. $r \in I$), the contraction $\pi_1$ (resp. $\pi_{r-1}$) can be small, divisorial or of fiber type, as shown in the following examples (see also Remark \ref{rem:nefcone} below).
\end{remark}

\begin{example}[Fiber type]\label{ex:ft} 
Consider the projective space $\DA_{5}(1)= \PP^{5}$, polarized with the hyperplane line bundle, endowed with the equalized bandwidth one action whose extremal fixed point components are two disjoint linear subspaces $\Lambda_-$ and $\Lambda_+$ of dimension $2$. We consider the induced action on \[X:=\DA_5(2) = \G(1,5).\]
The action on $\DA_{5}(2)$ has bandwidth two with respect to the Pl{\"u}cker line bundle, and three fixed point components; the extremal ones, $Y_0$ and $Y_2$, are the subvarieties parametrizing lines contained in $\Lambda_-$ or $\Lambda_+$, both isomorphic to $\P^2$. 
The Chow quotient of $X$ has dimension $7$, hence the morphism $\pi_1:\CX\to Y_0\times Y_2$ is necessarily of fiber type.
\end{example}

\begin{example}[Divisorial]\label{ex:div}
Let us consider the odd-dimensional quadric $\DB_{5}(1)=\mathbb Q^{9} \subset \PP^{10}$, polarized with the restriction of hyperplane line bundle, endowed with the equalized action that has isolated sink and source (\cite[Section~3.2]{FS}), and the induced action of the corresponding ($14$-dimensional) orthogonal Grassmannian of lines in the quadric: \[X=\DB_5(2).\]

The action on $\DB_{5}(1)$ has isolated extremal fixed points $z_-,z_+$ and a unique inner fixed point component $Z_0$, isomorphic to a smooth quadric $\DB_{4}(1)$, which is the intersection of $\DB_5(1)$ with the projective tangent spaces at $z_-$ and $z_+$. The action on $\DB_{5}(2)$ has three fixed point components: the extremal ones are the subvarieties parametrizing lines of $\DB_5(1)$ passing through $z_-$ or $z_+$; they are both isomorphic to $\DB_{4}(1)=\mathbb Q^{7}$. The inner one is the subvariety parametrizing lines contained in $Z_0$, which is isomorphic to $\DB_{4}(2)$.

Given two points $y_0 \in Y_0$, $y_2 \in Y_2$ corresponding to two disjoint lines $\ell_0,\ell_2$ the linear span of $\ell_0$ and $\ell_2$ cuts $\DB_{5}(1)$ in a smooth quadric surface. The conic in $\DB_5(2)$ parametrizing the lines in the ruling containing $\ell_0$ and $\ell_2$ is the closure of an orbit with sink $y_0$ and source $y_2$, and defines an element of $\CX$, which is the unique cycle passing through $y_0$ and $y_2$. In particular this shows that the morphism $\CX \to Y_0 \times Y_2$, which assigns to every invariant cycle its intersection with $Y_0$ and $Y_2$ is surjective, hence, computing dimensions, birational.

On the other hand, given $y_0 \in Y_0$ there exist a unique point $y_0^*\in Y_2$ such that the lines $\ell_0,\ell_0^*$ parametrized by these two points meet, at a point $z_0 \in Z_0$. Given a general three dimensional linear space $\Lambda$, containing $\ell_0$ and $\ell_0^*$ and contained in the projective tangent space $\mathbb T_{z_0}\mathbb Q^{9}$, its intersection with $\mathbb Q^{9}$ is a quadric cone containing $\ell_0$ and $\ell_0^*$. The lines in this cone are parametrized by a smooth conic in $\DB_{5}(2)$, which is the closure of an orbit with sink $y_0$ and source $y_0^*$. An easy computation shows that there is a $6$-dimensional family of such curves.

Hence the morphism $\pi_1:\CX \to Y_0 \times Y_2$ has positive dimensional fibers over the subvariety $(y_0,y_0^*)$, which has dimension $7 = \dim Y_0=\dim Y_2$, and these fibers have  dimension $6$. We conclude that $\pi_1$ has exceptional locus of dimension $13= \dim \CX -1$ and so it is divisorial. 
\end{example}

\begin{example}[Small]\label{ex:sm}  Let us start from the $\C^*$-action on the quadric $\DB_5(1)$ of Example \ref{ex:div}, and consider now the induced action on the ($18$-dimensional) orthogonal Grassmannian \[X:=\DB_5(3)\] of planes in the quadric. It has criticality two, and three fixed point components:
\[Y_0\simeq\DB_4(2),\quad Y_1\simeq\DB_4(3),\quad Y_2\simeq\DB_4(2).\]
In fact $Y_0$ and $Y_2$ are the subvarieties parametrizing planes of $\DB_5(2)$ passing through $z_-$ or $z_+$, so they are both isomorphic to $\DB_{4}(2)$. The inner one is the subvariety parametrizing planes contained in $Z_0$, which is isomorphic to $\DB_{4}(3)$.

We claim that the contraction $\pi_1:\CX\to Y_0\times Y_2$ is small. In fact, we note first that a point in $Y_0\times Y_2$ corresponds to a pair of lines $\Lambda_0,\Lambda_2$ in $\DB_4(1)$. With arguments similar to the ones used in Example \ref{ex:div} we can show that the image of $\pi_1$ corresponds to the set of pairs $(\Lambda_0,\Lambda_2)$ such that $\Lambda_0\cap \Lambda_2\neq \emptyset$, and the image of the exceptional locus corresponds to the set $\{(\Lambda_0,\Lambda_2)|\,\, \Lambda_0=\Lambda_2\}\simeq \DB_4(2)$, that has dimension $11$. The inverse image of each of these elements is $4$-dimensional, hence the exceptional locus of $\pi_1$ has dimension $15$, i.e., codimension two in $\CX$ which has dimension $17 = \dim \DB_5(3)-1$.
\end{example}

Let us conclude this section by  describing the Nef and  Mori cone of $\CX$: 

\begin{theorem}\label{thm:nefcone}
The intersection number $\cL_{i,i}\cdot\Gamma_j$ is positive if $i=j$ and zero otherwise. In particular, the numerical class of the curves $\Gamma_j$ is well defined, for every $j$. Moreover
the nef cone and the Mori cone  of $\CX$  are simplicial, generated by the numerical classes of the divisors $\cL_{i,i}$, $i\in I$, and of the curves $\Gamma_i$, $i\in I$, respectively.
\end{theorem}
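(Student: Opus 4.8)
The plan is to combine the intersection computations of the previous section with the basis results of Lemma \ref{lem:pic}, and then invoke duality between the nef cone and the Mori cone. First I would establish the intersection numbers $\cL_{i,i}\cdot\Gamma_j$. Recall that $\cL_{i,i}$ is the pullback of an ample bundle $\cH_{i,i}$ on $\GX_{i,i}$, and that the curve $\Gamma_j$ is a family of $\C^*$-invariant cycles constructed in Construction \ref{con:gij}. The image of $\Gamma_j$ in $\GX_{i,i}$ is a point whenever $i\neq j$: indeed, for $i\neq j$ the cycles parametrized by $\Gamma_j$ all share the same fixed point of weight $i$ (the fixed subcycle $\Delta_0$ or $\Delta_r$, or the unchanging component through $Y_{j-1},Y_{j+1}$ depending on whether $i<j$ or $i>j$), so their images under the quotient map to $\GX_{i,i}$ coincide; hence $\cL_{i,i}\cdot\Gamma_j=0$. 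On the other hand, for $i=j$ the family $\Gamma_j$ genuinely varies in $\GX_{j,j}$ (the one-parameter family of orbits through $P_{j-1}$ and $P_{j+1}$ moves), so $\cL_{j,j}\cdot\Gamma_j>0$ since $\cH_{j,j}$ is ample and the image is a nonconstant curve. This also shows that the classes of the $\Gamma_j$ in $\No_1(\CX)$ are well defined, as they pair nontrivially against a divisor class.

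Next I would assemble the pairing matrix $[\cL_{i,i}\cdot\Gamma_j]_{i,j\in I}$, which by the above is diagonal with strictly positive entries on the diagonal; in particular it is nonsingular. Since by Lemma \ref{lem:pic} the classes $\{\cL_{i,i}\}_{i\in I}$ form a basis of $\NU(\CX)$ (and $|I|=\rho(\CX)$), the dual classes $\{\Gamma_i\}_{i\in I}$ form a basis of $\No_1(\CX)$. Now I would identify the cones. By Proposition \ref{prop:nefCX2} and Remark \ref{rem:nefCX1}, each $\cD_i=\cA\otimes\cL_{i,i}^{-1}=\bigotimes_{k\neq i}\cL_{k,k}$ is nef (for $i\in I$, including the boundary cases $i=0$ and $i=r$ when these belong to $I$, handled by Remark \ref{rem:nefCX1}); and $\cD_i\cdot\Gamma_i=0$ by the diagonal pairing, while $\cD_i\cdot\Gamma_j=\cL_{j,j}\cdot\Gamma_j>0$ for $j\neq i$, $j\in I$. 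Thus each $\cD_i$ spans an extremal ray of $\Nef(\CX)$: it is nef and vanishes on the curve class $\Gamma_i$, and the contraction it supports (described in Proposition \ref{prop:nefCX2} and Remark \ref{rem:nefCX1}) is nontrivial, so $\cD_i$ lies on a genuine facet. Since $\{\cD_i\}_{i\in I}$ is, up to the linear change of basis $\cL_{j,j}\mapsto \cA-\sum\text{(other)}$, another basis of $\NU(\CX)$ — indeed $\cA=\frac{1}{|I|-1}\sum_i\cD_i$ and $\cL_{i,i}=\cA-\cD_i$ — the cone they generate is a full-dimensional simplicial cone contained in $\Nef(\CX)$.

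To see that these $|I|$ nef classes in fact generate $\Nef(\CX)$, I would argue by duality. The dual cone of $\cone(\cD_i:i\in I)$ inside $\No_1(\CX)$ is precisely $\cone(\Gamma_i:i\in I)$, because $\cD_i\cdot\Gamma_i=0$ and $\cD_i\cdot\Gamma_j>0$ for $i\neq j$ means the $\Gamma_i$ are the extremal generators dual to the facets $\cD_i^\perp$. So it suffices to show $\cME{\CX}=\cone(\Gamma_i:i\in I)$, equivalently that every irreducible curve in $\CX$ has nonnegative intersection with each $\cD_i$ — but this is immediate since each $\cD_i$ is nef. Hence $\cME{\CX}\subseteq\cone(\Gamma_i:i\in I)$, and the reverse inclusion holds because each $\Gamma_i$ is an actual curve in $\CX$; therefore $\cME{\CX}=\cone(\Gamma_i:i\in I)$ is simplicial, and dualizing back, $\Nef(\CX)=\cone(\cD_i:i\in I)=\cone(\cL_{i,i}:i\in I)$ where the last equality is the same linear change of basis as above. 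I expect the main obstacle to be the careful verification that each $\cD_i$ really spans an extremal ray rather than lying in the interior of a facet — i.e., that the contraction $\pi_i$ is nontrivial for every $i\in I$, including the subtle boundary indices — which requires invoking Proposition \ref{prop:nefCX2}, Remark \ref{rem:nefCX1}, and the equivalences there relating $\dim Y_0=0$ to $\cL_{0,0}$ being trivial; the genuinely geometric input (that $\Gamma_i$ moves in $\GX_{i,i}$) is the content of Construction \ref{con:gij} and Lemma \ref{lem:smoothij}, already established.
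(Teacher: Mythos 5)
Your computation of the intersection matrix $[\cL_{i,i}\cdot\Gamma_j]$ and the identification of $\{\Gamma_i\}_{i\in I}$ as (a positive rescaling of) the basis dual to $\{\cL_{i,i}\}_{i\in I}$ follow the paper's route, but the duality step at the end contains a genuine error. For $|I|\geq 3$ the class $\cD_i=\sum_{k\neq i}\cL_{k,k}$ does \emph{not} span an extremal ray of $\Nef(\CX)$: since $\cD_i\cdot\Gamma_i=0$ while $\cD_i\cdot\Gamma_j>0$ for all $j\neq i$, it lies in the \emph{relative interior} of the facet $\Gamma_i^{\perp}\cap\Nef(\CX)$ (your closing worry was exactly right, and the answer is the opposite of what you assumed). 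Consequently $\cone(\cD_i : i\in I)$ is strictly contained in $\cone(\cL_{i,i} : i\in I)$ --- note that $\cL_{i,i}=\tfrac{1}{|I|-1}\sum_{j\neq i}\cD_j-\tfrac{|I|-2}{|I|-1}\cD_i$ has a negative coefficient --- so the asserted equality of these two cones fails (a linear change of basis does not preserve cones), and the dual cone of $\cone(\cD_i : i\in I)$ strictly contains $\cone(\Gamma_i : i\in I)$. Concretely, ``$C\cdot\cD_i\geq 0$ for all $i$'' is \emph{not} equivalent to ``$C\in\cone(\Gamma_i)$'': writing $C=\sum a_j\Gamma_j$ and $\lambda_j=\cL_{j,j}\cdot\Gamma_j>0$, the conditions read $\sum_{j\neq i}a_j\lambda_j\geq 0$, which already for $|I|=3$ (take $\lambda_j=1$ and $(a_1,a_2,a_3)=(2,2,-1)$) do not force $a_j\geq 0$. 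The repair is immediate and is what the paper does: intersect $C=\sum a_j\Gamma_j$ with the nef classes $\cL_{i,i}$ themselves (each is the pullback of an ample bundle, hence nef), obtaining $a_i\lambda_i\geq 0$ directly; this gives $\NE(\CX)\subseteq\cone(\Gamma_i)$, the reverse inclusion is clear, and $\Nef(\CX)=\cone(\cL_{i,i})$ follows by dualizing. The $\cD_i$ are not needed in this step.

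Two smaller points. First, your justification that the numerical class of $\Gamma_j$ is well defined (``they pair nontrivially against a divisor class'') only shows the class is nonzero, hence that the \emph{ray} is independent of the choices made in Construction \ref{con:gij}; to pin down the class itself one must fix the value $\cL_{j,j}\cdot\Gamma_j$, which the paper does by combining $E_j\cdot\Gamma_j=2$ (Lemma \ref{lem:intnumbers1}) with the expression of $E_j$ in the basis $\{\cL_{i,i}\}$ of $\NU(\CX)$. Second, your claim that the image of $\Gamma_j$ in $\GX_{j,j}$ is a nonconstant curve is plausible but not argued (a family moving in $X$ could a priori be contracted by the quotient map); the clean route is again the paper's: $\cA=\bigotimes_k\cL_{k,k}$ is ample, so $\cA\cdot\Gamma_j>0$, and since $\cL_{i,i}\cdot\Gamma_j=0$ for $i\neq j$ this forces $\cL_{j,j}\cdot\Gamma_j>0$.
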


\begin{proof} By Proposition \ref{prop:nefCX2} $\cD_i \cdot \Gamma_i=0$ for every $i \in I$; in particular -- since every $\cL_{j,j}$ is nef -- $\cL_{j,j}\cdot \Gamma_j=0$ for $j\neq i$. Since, by Lemma \ref{lem:intnumbers1}, $E_j \cdot \Gamma_j=2$, we see that the numerical class of the curves $\Gamma_j$ constructed in (\ref{con:gij}) is well defined. Abusing notation, we will write this class as $\Gamma_j\in\Nu(\CX)$.

By \cite[Lemma 6.4]{WORS6} $\cA=\cD_i \otimes \cL_{i,i}$ is ample on $\CX$, so $\cL_{i,i} \cdot \Gamma_i >0$. 
The set $\{\cL_{i,i}\}_{i \in I}$ is linearly independent in $\NU(\CX)$ by Lemma \ref{lem:pic}; therefore also the set $\{\Gamma_i\}_{i \in I}\subset \Nu(\CX)$ is linearly independent, so every class in $\overline{\NE(\CX)}$ can be written as a real linear combination 
\[
\sum a_i\Gamma_i.
\]
Intersecting with $\cL_{i,i}$ we get that $a_i \ge 0$ for every $i \in I$, so $\NE(\CX)$ is the cone generated by  $\{\Gamma_i\}_{i \in I}$ and by duality we get the statement on the Nef cone.
\end{proof}

\begin{remark}\label{rem:nefcone}
Let us consider the case in which $\dim Y_0$, $\dim Y_r >0$. As previously noted, the contractions $\pi_0,\pi_r$ are always divisorial, and the contractions $\pi_i$, $i=2,\dots,r-2$ are always small. In the case $r=2$ we have seen examples (\ref{ex:ft}, \ref{ex:div}, \ref{ex:sm}) in which $\pi_1$ is of fiber type, divisorial and small. When $r>2$ the contractions $\pi_1,\pi_{r-1}$ are necessarily birational (in fact the birational maps $\CX\to \GX_{2,2}$, $\CX\to \GX_{r-2,r-2}$ factor via $\pi_1,\pi_{r-1}$, respectively). We have represented in Figure \ref{fig:cones} the Mori cones of these Chow quotients in the cases $r=2,3,4$. 

\begin{figure}[h!!]
\begin{tikzpicture}
\node at (0,0) {\includegraphics[width=11cm]{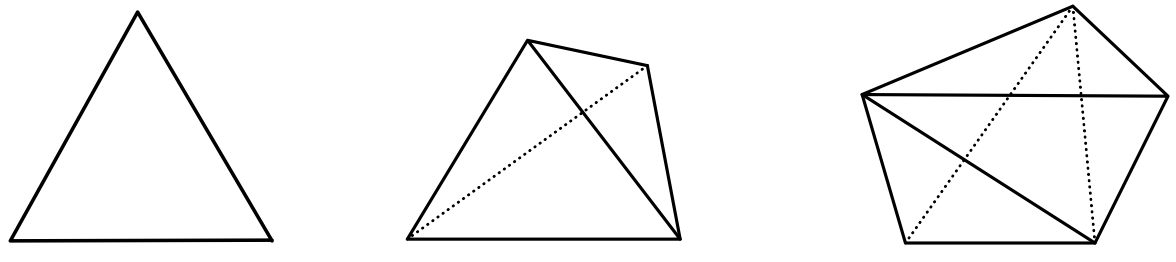}};
\node at (-5.5,-1.15) {\tiny $d$};
\node at (-2.9,-1.15) {\tiny $d$};
\node at (-4.2,1.25) {\tiny $s,d,f$};
\node at (-5.15,-0.85) {\color{blue}\tiny $0$};
\node at (-3.2,-0.85) {\color{blue}\tiny $2$};
\node at (-4.2,0.85) {\color{blue}\tiny $1$};
\node at (-1.8,-1.15) {\tiny $d$};
\node at (0.9,-1.15) {\tiny $d$};
\node at (-0.6,1) {\tiny $s,d$};
\node at (0.6,0.8) {\tiny $s,d$};
\node at (-1.45,-0.85) {\color{blue}\tiny $0$};
\node at (0.60,-0.87) {\color{blue}\tiny $3$};
\node at (-0.55,0.65) {\color{blue}\tiny $1$};
\node at (0.48,0.42) {\color{blue}\tiny $2$};
\node at (2.8,-1.15) {\tiny $d$};
\node at (4.75,-1.15) {\tiny $d$};
\node at (2.25,0.45) {\tiny $s,d$};
\node at (5.6,0.45) {\tiny $s,d$};
\node at (4.5,1.3) {\tiny $s$};
\node at (3.05,-0.9) {\color{blue}\tiny $0$};
\node at (4.63,-0.82) {\color{blue}\tiny $4$};
\node at (2.75,0.10) {\color{blue}\tiny $1$};
\node at (5.15,0.45) {\color{blue}\tiny $3$};
\node at (4.46,0.90) {\color{blue}\tiny $2$};
\end{tikzpicture}
\caption{The Mori cone of $\CX$ and the possible types of contractions ($d$=divisorial, $f$=fiber type, $s$=small) in the cases $r=2,3,4$, with no isolated extremal fixed points.\label{fig:cones}}
\end{figure}

The type (divisorial or small) of the birational contraction $\pi_1$ (resp. $\pi_{r-1}$) turns out to be determined by the induced action on the varieties $\cU_x$, $x\in Y_1$ (respectively $x \in Y_{r-1}$), which has bandwidth two. If there is an orbit linking two given elements of the sink and the source of $\cU_x$, then the map $\pi_1$ (resp. $\pi_{r-1}$) is divisorial; if that is not the case then the map is small. A more detailed study of the (Nef, Mori, Movable, Effective) cones of $\CX$ and of the partial Chow quotients, with special attention to the case in which $X$ is rational homogeneous, will be the goal of a forthcoming paper. 
\end{remark}


\section{Anticanonical bundle of the Chow quotient}\label{sec:antican}

\begin{notation}\label{not:dualbasis}
Theorem \ref{thm:nefcone} allows us to consider the basis $\{L_i\}_{i\in I}$ of $\NU(\CX)$ dual of $\{\Gamma_i\}_{i\in I}$, whose elements are determined by some effective $\Q$-divisors, that we denote by $L_i$, as well. 
\end{notation}

We now compute the class of the canonical bundle of $\CX$ as a linear combination of the $L_i$'s. Let us first introduce some appropriate invariants of the $\C^*$-action, that will appear in the coefficients of the final formula.

\begin{definition}\label{def:us}
Let us consider the ranks $\nu^\pm_i=\rk(N^\pm(Y_i))$, introduced in Remark \ref{rem:convex}. We set:
\begin{equation}\label{eq:ui}
u_j^+:= \nu^-_j-\nu^-_{j+1}-1, \qquad
u_j^-:= \nu^+_j-\nu^+_{j-1}-1.
\end{equation}
The values $u_j^\pm$ are the dimension of the extremal fixed point components of the induced action on $\cU_y$, for a general $y \in Y_j$, (see \cite[Corollary 5.14]{OS2}). We will not need this fact here, but we note that, by \cite[Corollary 5.14]{OS2} we also have, for every $j= 0, \dots, r-1$, that $u_j^+ + u_{j+1}^-  = \dim \cU_y$ ($j= 0, \dots, r-1$); since the dimension of $\cU_y$ is constant, we get:
\begin{equation}\label{eq:u-+2}
 u_{j-1}^+-u_j^+ =  u_{j+1}^--u_j^-,\qquad j= 1, \dots, r-1.
\end{equation}
\end{definition}

Note that in the following statement we do not need to distinguish the case of extremal isolated fixed points, since, if $\dim Y_0 = 0$ (resp. $\dim Y_r=0$), the coefficient of $L_0$ (resp. of $L_r$) appearing in the formulae is zero. 

\begin{theorem}\label{thm:can} Let $X$ be as in Setup \ref{set:main}. If  
the criticality of the action on $X$ is equal to $2$, then
\[
-K_{\CX}= u_1^-L_0+(u_0^+-u_1^++2)L_1 +u_1^+L_2.
\]
If else the criticality of the action is greater than or equal to $3$, then 
\[
-K_{\CX} =u_1^-L_0 +(u_{0}^+ - u_1^+ +1)L_1 + \sum_{i=2}^{r-2} ( u_{i-1}^+-u_i^+)L_i +(u_{r-2}^+ - u_{r-1}^+ +1)L_{r-1}
+u_{r-1}^+L_r.
\]
\end{theorem}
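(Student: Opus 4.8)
The plan is to compute $K_{\CX}$ by writing it as a linear combination of the boundary divisors $E_i$ and the pullback line bundles $\cL_{i,i}$, and then converting to the basis $\{L_i\}$ using the intersection numbers already computed. First I would use the fact, established in Theorem \ref{thm:smoothpart} (together with Theorem \ref{thm:WORS6smooth} and Lemma \ref{lem:Eiblowup}), that $\CX$ is obtained from one of the geometric quotients $\GX_{i,i}$ by a sequence of blowups along smooth centers, with the $E_i$ being (up to the identification of Lemma \ref{lem:Eiblowup}) the exceptional divisors. Concretely, using the tower $\GX_{0,1}\leftarrow \CX_{0,2}\leftarrow\dots\leftarrow\CX$, each step $\prul:\CX_{0,k+1}\to\CX_{0,k}$ is the blowup of a smooth irreducible center $S_{0,k}$ of some codimension $c_k$, so $K_{\CX_{0,k+1}}=\prul^*K_{\CX_{0,k}}+(c_k-1)E_k'$. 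Iterating and pulling everything back to $\CX$ gives a formula of the shape
\[
-K_{\CX}=\pi_0^*(-K_{\GX_{0,1}})-\sum_{i=1}^{r-1}(c_i-1)E_i,
\]
where $\pi_0^*(-K_{\GX_{0,1}})$ is a nef class pulled back from $\GX_{0,1}=\P_{Y_0}(\cN_{Y_0})$; up to a positive multiple and modulo the $\cL_{i,i}$ with $i\neq 0,1$, this is an explicit combination of $\cL_{0,0}$ and $\cL_{1,1}$, because $-K_{\GX_{0,1}}$ is the relative anticanonical of a projective bundle plus a pullback from $Y_0$ (whose Picard number is one).

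The main computational input is the codimension of the $i$-th blowup center. Here I would appeal to the description of these centers in \cite[Remark~5.10]{WORS6} and the Bia{\l}ynicki-Birula picture: the center of $\prur:\CX_{i-1,i+1}\to\GX_{i,i+1}$ is (a bundle over $Y_i$ with fibers) $\P(N^-(Y_i))\times_{Y_i}\P(N^+(Y_i))\cong E_i$, which has codimension $\nu^+_i-1$ inside $\GX_{i,i+1}$ on the $\prur$ side, and codimension $\nu^-_i-1$ on the $\prul$ side; I expect the relevant discrepancy to come out expressed in the $\nu^\pm_i$, and hence, after using the definitions $u_j^+=\nu^-_j-\nu^-_{j+1}-1$, $u_j^-=\nu^+_j-\nu^+_{j-1}-1$ and the relation \eqref{eq:u-+2}, in the $u^\pm_j$. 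An alternative, perhaps cleaner route is to compute $-K_{\CX}\cdot\Gamma_j$ directly for every primal curve $\Gamma_j$: by Corollary \ref{cor:can}, applied to the invariant curves making up the cycles in the family $\Gamma_j$, one gets $-K_X\cdot(\text{cycle})$ in terms of the $\nu^\pm$, and then $-K_{\CX}\cdot\Gamma_j$ picks up correction terms from the $E_i\cdot\Gamma_j$ of Proposition \ref{prop:intnumbers}; since $L_i\cdot\Gamma_j=\delta_{ij}$ by Notation \ref{not:dualbasis}, knowing all the intersection numbers $-K_{\CX}\cdot\Gamma_j$ immediately yields the coefficient of each $L_j$.

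I would therefore structure the proof as follows. Step 1: write $-K_{\CX}$ in terms of $E_1,\dots,E_{r-1}$ and the pullback $\pi_0^*(-K_{\GX_{0,1}})$ via the blowup tower, obtaining the blowup discrepancies $c_i-1$ in terms of $\nu^\pm_i$ using \cite[Remark~5.10]{WORS6}. Step 2: identify $\pi_0^*(-K_{\GX_{0,1}})$ numerically — it is a combination of $\cL_{0,0}$ and $\cL_{1,1}$ (the projective bundle $\P_{Y_0}(\cN_{Y_0})\to Y_0$ contributes $\rk\cN_{Y_0}=\nu^-_0$ times the tautological class plus a pullback from $Y_0$, both of which are nonnegative combinations of $\cL_{0,0},\cL_{1,1}$). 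Step 3: express $E_i$ and $\cL_{i,i}$ in the dual basis $\{L_j\}$: since $\cL_{j,j}\cdot\Gamma_k=0$ for $k\neq j$ and $>0$ for $k=j$ (Theorem \ref{thm:nefcone}), and $E_i\cdot\Gamma_j$ is the tridiagonal matrix of Proposition \ref{prop:intnumbers}, both are straightforward linear algebra. Step 4: assemble, using \eqref{eq:u-+2} to telescope the interior coefficients into the stated form $u_{i-1}^+-u_i^+$, and separately treat the $r=2$ case (where the "interior" sum is empty and $L_1$ gets both the $+1$ corrections from $\pi_0$ and $\pi_r$ sides, giving $+2$). The main obstacle is Step 1/Step 2: pinning down the exact codimensions of the successive (strict transforms of the) centers and the precise numerical expression for $-K_{\GX_{0,1}}$, since a careless bookkeeping of which $\nu^\pm$ enters at which stage is exactly where the $+1$ versus $+2$ discrepancies and the symmetry between the source and sink sides live; I would double-check this by verifying the alternative computation $-K_{\CX}\cdot\Gamma_j$ via Corollary \ref{cor:can} and Proposition \ref{prop:intnumbers} gives the same coefficients.
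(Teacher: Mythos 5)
Your overall strategy --- express $-K_{\CX}$ through the discrepancies of the blowup tower over $\GX_{0,1}$, then intersect with the primal curves and read off coefficients in the dual basis $\{L_i\}$ --- is the paper's strategy. But the two points you yourself flag as delicate are exactly where the proposal fails as written. First, the codimensions: the center of $\prul:\CX_{0,i+1}\to\CX_{0,i}$ is the \emph{image} of the exceptional divisor $E_i'\simeq\P(N^-(Y_i))\times_{Y_i}\P(N^+(Y_i))$ (not $E_i'$ itself, as you write), and the contraction onto it has fibers $\P^{\nu_i^--1}$, so the codimension is $\nu_i^-$, not $\nu_i^\pm-1$; you appear to be quoting the fiber dimension of the center over $Y_i$ rather than its codimension. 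Since the discrepancy is $c_i-1$, this off-by-one propagates into every coefficient of the final formula. You also assume that iterating the blowup formula and pulling back is clean; one must check that each successive center is not contained in the union of the previous exceptional divisors (so that total and strict transforms agree), which the paper justifies by producing a smooth invariant cycle linking $Y_0$ to $Y_{i+1}$ via \cite[Theorem 5.7]{OS2}.

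Second, your Step 2 cannot be carried out with the data at hand: writing $-K_{\GX_{0,1}}$ as an explicit combination of $\cL_{0,0}$ and $\cL_{1,1}$ requires the degree of $N_{Y_0|X}$ against the ample generator of $\Pic(Y_0)$ and the index of $Y_0$, invariants that do not appear in the statement and are not controlled by Setup \ref{set:main}. This matters precisely for the coefficient of $L_0$, i.e.\ for $-K_{\CX}\cdot\Gamma_0$, since $\Gamma_0$ is \emph{not} contracted by $\CX\to\GX_{0,1}$. The paper's way out --- which your proposal does not contain --- is to write $-K_{\CX}$ in the two symmetric forms $-K_{\GX_{0,1}}+\sum_i(1-\nu_i^-)E_i$ and $-K_{\GX_{r-1,r}}+\sum_i(1-\nu_i^+)E_i$, and to use that every $\Gamma_j$ with $j\ge 2$ is contracted to a point in $\GX_{0,1}$ while $\Gamma_0,\Gamma_1$ are contracted in $\GX_{r-1,r}$ (for $r\ge 3$); then Proposition \ref{prop:intnumbers} and the relation (\ref{eq:u-+2}) give all coefficients with no knowledge of $-K_{\GX_{0,1}}$ at all. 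Only when $r=2$ does $\Gamma_1$ escape both contractions, and there one needs the genuinely geometric extra input that the image of $\Gamma_1$ in $\GX_{0,1}=\P_{Y_0}(N_{Y_0|X})$ is a line in a fiber (injectivity of the differential of $\FF_2\to X$ at the sink, using equalization), whence $-K_{\GX_{0,1}}\cdot\Gamma_1=\nu_0^-$. Finally, your proposed cross-check via Corollary \ref{cor:can} computes $-K_X\cdot C$ for orbit closures in $X$, which is not $-K_{\CX}\cdot\Gamma_j$; relating the two would require working on the universal family and is not an independent verification as stated.
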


\begin{proof}
We will use  the description of $\CX$ as the result of a sequence of smooth blowups along the sides of the triangle (\ref{fig:Chow0}). The centers of these blowups have been described in \cite[Remark~5.10]{WORS6}: the center of the blowup $\CX_{0,i}\leftarrow \CX_{0,i+1}$ is the set of invariant cycles in $X_{0,i}$ whose strict transforms in $X$ have source in $Y_{i}$; it has codimension $\nu_i^-$. By \cite[Theorem 5.7]{OS2} there exists a smooth $\C^*$-invariant cycle linking $Y_0$ and $Y_{i+1}$, so the center of $\CX_{0,i}\leftarrow \CX_{0,i+1}$ is not contained in the union of the exceptional divisors of $\GX_{0,1}\leftarrow \CX_{0,i}$; in particular the inverse images of the exceptional divisors of $\GX_{0,1}\leftarrow \CX_{0,i}$ coincide with their strict transforms. Since  $-K_{\CX_{0,i+1}}+K_{\CX_{0,i}}$ is linearly equivalent to the exceptional divisor counted with multiplicity $(1-\nu_i^-)$ for every $i$, we conclude that:   
\[
-K_{\CX}= -K_{\GX_{0,1}}+ \sum_{i=1}^{r-1}(1-\nu_i^-) E_i.
\]
Similarly, we also get that:
\[
-K_{\CX}= -K_{\GX_{r-1,r}}+ \sum_{i=1}^{r-1}(1-\nu_i^+) E_i.
\]

From the first expression, taking into account that for every $j\geq 2$ the curves $\Gamma_j$ are contracted by the natural map from $\CX$ to $\GX_{0,1}$, the intersection numbers computed in Proposition \ref{prop:intnumbers}, and the definition of the invariants $u_i^{\pm}$, we have:
\[
-K_{\CX}\cdot \Gamma_j=
\left\{\begin{array}{ll}
-(1-\nu^-_{j-1})+2(1-\nu^-_{j})-(1-\nu^-_{j+1})=u^+_{j-1}-u^+_{j}&\mbox{if }j<r-1;\\
-(1-\nu^-_{r-2})+2(1-\nu^-_{r-1})=u^+_{r-2}-u^+_{r-1}+1&\mbox{if }j=r-1;\\
-(1-\nu^-_{r-1})=u^+_{r-1}&\mbox{if }j=r.
\end{array}
\right.
\]
Analogously, we may use the second expression of $-K_{\CX}$ to get:
\[-K_{\CX}\cdot\Gamma_0=u_1^-,\qquad -K_{\CX}\cdot\Gamma_1=u^-_2-u_1^-+1=u_{0}^+ - u_1^+ +1.\]
The last equality follows from Formula (\ref{eq:u-+2}). 

The above intersection numbers prove the statement in the case $r\geq 3$. In the case $r=2$, these arguments provide only the anticanonical degrees of $\Gamma_0$ and $\Gamma_2$ (respectively, $u^-_1$, $u^+_1$), and a different one is needed to prove that $-K_{\CX}\cdot\Gamma_1=u_0^+-u_1^++2$. Since we know that $E_1\cdot\Gamma_1=2$ and $-K_{\CX}=-K_{\GX_{0,1}}+(1-\nu_1^-)E_1$, we need to prove that $-K_{\GX_{0,1}}\cdot \Gamma_1=\nu^-_0$. 

The variety $\GX_{0,1}$ is a $\P^{\nu^-_0-1}$-bundle over $Y_0$, and $\Gamma_1$ is clearly contracted by the composition $\CX\to \GX_{0,1}\to Y_0$, so it is enough to show that the image of $\Gamma_1$ in  $\GX_{0,1}$ is a line in a fiber of $\GX_{0,1}\to Y_0$. 

By construction, the points of $\Gamma_1$  are $\C^*$-invariant cycles which are the images into $X$ of a $1$-dimensional family of cycles in $\FF_2$ passing by two points $x_-,x_+\in\FF_2$; we denote the image of $x_-$ in $X$ by $P_0\in Y_0$. Since the $\C^*$-action is equalized, these cycles have different tangent directions at $P_0$, so the differential of the map $\FF_2\to X$ at $x_-$ is injective, and the projectivization of its image is a line in $\P(N^-(Y_0)_{P_0})$. This is precisely the image of $\Gamma_1$ into $\GX_{0,1}$, and the proof is finished.
\end{proof}

\begin{remark}\label{rem:howtouse}The formulae in Theorem \ref{thm:can} can be used to compute the canonical bundle of the Chow quotient of a given equalized $\C^*$-action on a variety $X$, if the Fano index $i(X)$ of $X$ and the fixed point components of the action are known: in fact, for every inner fixed point component $Y_j$ we have, by definition and   by Corollary \ref{cor:can}: 
\[
\begin{cases}
\nu_j^+ + \nu_j^- = \codim (Y_j,X),\\
\nu_j^+ - \nu_j^- = i(X)j - \codim(Y_0,X).
\end{cases}
\]
We can then compute $\nu_j^\pm$ for every fixed point component and, subsequently, $u_j^\pm$
using formula (\ref{eq:ui}).
\end{remark}

\begin{example} The Chow quotients of the varieties in Examples \ref{ex:ft}, \ref{ex:div} and \ref{ex:sm} are all Fano manifolds. In fact, we can compute their anticanonical bundles as explained above, obtaining:
\[\begin{aligned}
 -K_{\CX}&= L_1 + 4L_2+L_3,  &\mbox{when}\quad X &=\DA_5(2); \nonumber \\
 -K_{\CX}&= L_1 + 6L_2+L_3,  & \mbox{when}\quad X &=\DB_5(2); \nonumber \\
 -K_{\CX}&= 2L_1 + 3L_2+2L_3, &\mbox{when}\quad X &=\DB_5(3). \nonumber 
\end{aligned}
\]
In all cases $-K_{\CX}$ is ample, by Kleiman's criterion.
\end{example}

\begin{example} The action on $\DB_6(5)$ induced by the the equalized action on $\DB_6(1)$ with isolated sink and source (\cite[Section~3.2]{FS}) has criticality two and fixed point components 
\[Y_0\simeq\DB_5(4),\quad Y_1\simeq\DB_5(5),\quad Y_2\simeq\DB_5(4).\]
We can compute that 
\[-K_{\CX}=4L_0 -L_1+4L_2,\]
so in this case $-K_{\CX}$ is not nef. Analogously, one may compute that the anticanonical bundle of the Chow quotient of $\DB_5(4)$ with respect to the action induced by the action on $\DB_5(1)$ with isolated sink and source  is $3L_0 +3L_2$, which is nef but not ample.
\end{example}

\bibliographystyle{plain}
\bibliography{bibliomin}
\end{document}